\UseRawInputEncoding
\documentclass[10pt]{amsart}
\usepackage[english]{babel}
\usepackage{amssymb}
\usepackage{amsfonts}
\usepackage{amsmath}
\usepackage[hidelinks]{hyperref}
\usepackage{bm}
\usepackage{latexsym}
\usepackage{epsfig}
\numberwithin{equation}{section}

\newtheorem{theorem}{Theorem}[section]
\newtheorem{cor}[theorem]{Corollary}
\newtheorem{lemma}[theorem]{Lemma}
\newtheorem{proposition}[theorem]{Proposition}
\newtheorem{definition}[theorem]{Definition}
\newtheorem{remark}[theorem]{Remark}

\numberwithin{equation}{section}
\def\beq{\begin{equation}}
\def\eeq{\end{equation}}
\def\ben{\begin{enumerate}}
\def\een{\end{enumerate}}

\def\cA{{\mathcal A}}

\def\cC{{\mathcal C}}
\def\cE{{\mathcal E}}
\def\cD{{\mathcal D}}
\def\cF{{\mathcal F}}
\def\cG{{\mathcal G}}
\def\cH{{\mathcal H}}

\def\cO{{\mathcal O}}

\def\cN{{\mathcal N}}
\def\cM{{\mathcal M}}

\def\cR{{\mathcal R}}
\def\cS{{\mathcal S}} 
\def\cT{{\mathcal T}}

\def\cV{{\mathcal V}}
\def\cW{{\mathcal W}}

\def\cW{\mathcal W}

\newcommand{\RR}{{\mathbb R}}
\newcommand{\CC}{{\mathbb C}}

\newcommand{\KK}{{\mathbb K}}
\newcommand{\NN}{{\mathbb N}}

\newcommand{\fR}{{\mathfrak R}}

\newcommand{\lam}{{\lambda}}

\newcommand{\mtm}{m\times m}
\newcommand{\ntn}{n\times n}
\newcommand{\ptp}{p\times p}

\newcommand{\sts}{s\times s}

\newcommand{\wt}{\widetilde}
\newcommand{\wh}{\widehat}

\newcommand{\ul}{\underline}

\newcommand{\bB}{\mathbf{B}}

\newcommand{\bT}{\mathbf{T}}
\newcommand{\bA}{\mathbf{A}}

\newcommand{\fC}{\mathfrak{C}}
\newcommand{\fO}{\mathfrak{O}}

\newcommand{\fB}{\mathfrak{B}}
\newcommand{\fA}{\mathfrak{A}}
\newcommand{\fa}{\mathfrak{a}}
\newcommand{\fm}{\mathfrak{m}}
\newcommand{\fr}{\mathfrak{r}}

\newcommand{\obs}[2]{\cN\cO_{#1,#2}}
\DeclareMathOperator{\Ima}{Im}

\newcommand{\introthmname}{}
\newtheorem{introthminn}{\introthmname}
\newenvironment{introthm}[1]
  {\renewcommand{\introthmname}{#1}\begin{introthminn}}
  {\end{introthminn}}

\swapnumbers

\usepackage{color}
\usepackage[normalem]{ulem}
\makeatletter
\def\moverlay{\mathpalette\mov@rlay}
\def\mov@rlay#1#2{\leavevmode\vtop{
    \baselineskip\z@skip \lineskiplimit-\maxdimen
    \ialign{\hfil$#1##$\hfil\cr#2\crcr}}}
\makeatother
\newcommand{\plangle}{\moverlay{(\cr<}}
\newcommand{\prangle}{\moverlay{)\cr>}}

\begin{document}

\title[Realizations of nc rational functions 
around a matrix centre, I]
{Realizations of Non-Commutative Rational 
Functions around a matrix centre, I:
synthesis, minimal realizations and evaluation on stably finite algebras}
\thanks{The research of both authors was 
partially supported by the US--Israel 
Binational Science Foundation (BSF) 
Grant No. 2010432, Deutsche Forschungsgemeinschaft (DFG) 
Grant No. SCHW 1723/1-1, 
and Israel Science Foundation (ISF) 
Grant No. 2123/17.}
\author[M. Porat]{Motke Porat}
\address{(MP) Department of Mathematics\\
Ben-Gurion University of the Negev\\
P.O. Box 653,
Beer-Sheva 84105\\ Israel}
\email{motpor@gmail.com}
\author[V. Vinnikov]{Victor Vinnikov}
\address{(VV) Department of Mathematics\\
Ben-Gurion University of the Negev\\
P.O. Box 653,
Beer-Sheva 84105\\ Israel}
\email{vinnikov@math.bgu.ac.il}
%
%
\begin{abstract}
In this paper we generalize classical results 
regarding minimal realizations  of 
non-commutative (nc) rational functions using  nc 
Fornasini--Marchesini
realizations which are centred at an arbitrary matrix point.
We prove the existence and uniqueness 
of a minimal realization for every nc 
rational function, centred at 
an arbitrary matrix point in its domain of regularity.
Moreover, we show that using this realization we can evaluate the function on 
\textbf{all} of its domain (of matrices of all sizes) and also w.r.t \textbf{any} stably finite algebra. 
As a corollary we obtain a new proof of the theorem by Cohn and Amitsur, that equivalence of two rational expressions over matrices implies the expressions are equivalent over all stably finite algebras. 
Applications to the matrix valued and the symmetric cases are presented as well.
\end{abstract}
\maketitle
\tableofcontents
\section*{Introduction}
Noncommutative (nc, for short) rational functions 
are a skew field of fractions
--- more precisely, the universal skew field of fractions --- 
of the
ring of nc polynomials, i.e., polynomials in 
noncommuting indeterminates (the free associative algebra).
Essentially, they are obtained
by starting with nc polynomials and applying
successive arithmetic operations;
a considerable amount of technical details is necessary here since
in contrast to the commutative case there is no canonical coprime
fraction representation for a nc rational function. NC rational functions originated from several sources: the general theory of free rings and of skew fields
(see 
\cite{Co61,Hu70,Co71a,Co72,Le74,Linn93,Lich00},
\cite{Co71,Co82,Co06,Co95} 
for comprehensive expositions, and
\cite{R99,Linn06} 
for good surveys); 
the theory of rings with
rational identities (see 
\cite{AM66}, 
also 
\cite{Be70} and
\cite[Chapter 8]{Row80}); 
and rational former power series in the
theory of formal languages and finite automata (see
\cite{Kle,Schutz61,Schutz62b,Fliess70,Fliess74a,Fliess74b} 
and
\cite{BR} for a good survey). 

Much like in the case of rational functions of a single variable 
\cite{BGK,KAF}
(and unlike the case of several commuting variables 
\cite{G1,Kac1}),
nc rational functions that are regular at 
$\ul{0}$
admit a good state space realization theory, 
see in particular Theorem 
\ref{thm:realization} below.
This was first established in the context of finite automata and recognizable power series,
and more recently reformulated, with additional details, in the context of 
transfer functions of multidimensional systems with evolution
along the free monoid 
(see 
\cite{BV1,BGM1,BGM2,BGM3,AK0,BK-V}).
State space realizations of nc rational functions have figured
prominently in work on robust control of linear systems subjected
to structured possibly time-varying uncertainty (see 
\cite{Beck,
BeckDoyleGlover, LuZhouDoyle}). 
Another important application of nc rational functions
appears in the area of Linear Matrix Inequalities (LMIs, see,
e.g., 
\cite{NN,N06,SIG97}). 
Most optimization problems of system theory and
control are dimensionless in the sense that the natural variables
are matrices, and the problem involves nc rational expressions in these matrix
variables which have therefore the same form independent of matrix sizes (see
\cite{CHSY,H1,H3}).
State space realizations are exactly what is
needed to convert (numerically unmanageable) rational matrix
inequalities into (highly manageable) linear matrix inequalities
(see \cite{HMcCV}). 

Coming from a different direction, the method of state space realizations,
also known as the linearization trick, found important recent applications in free
probability, see 
\cite{BMS,HMS,Sp1,Sp3}. 
Here it is crucial to evaluate nc rational expressions
on a general algebra --- which is stably finite in many important cases ---
rather than on matrices of all sizes. Stably finite algebras appeared in this context in the work of Cohn
\cite{Co06} and they play an important and not surprising role in our analysis.

Here is  a full characterization of nc rational functions which are regular at $\ul{0}$
and their (matrix) domains of regularity, 
in terms of their minimal realizations
(for the proofs, see 
\cite{BGM1,BGM2,Fliess70,Fliess74a,Fliess74b,KV3,KV4,Schutz61,Schutz62b}).
\begin{introthm}{Theorem}
\label{thm:realization}
If 
$\fR$
is a  nc rational function of 
$x_1,\ldots,x_d$ 
and 
$\fR$ 
is regular at 
$\ul{0}$, 
then
$\fR$ 
admits a unique (up to unique similarity) 
minimal nc Fornasini--Marchesini realization
\begin{equation*}
\fR(x_1,\ldots,x_d)=D+C
\Big(I_L-
\sum_{k=1}^d A_kx_k
\Big)^{-1}
\sum_{k=1}^d B_kx_k,
\end{equation*}
where 
$A_1,\ldots,A_d\in\KK^{L\times L} 
,B_1,\ldots,B_d\in\KK^{L\times 1}, 
C\in\KK^{1\times L},D=\fR(\ul{0})\in\KK$ 
and 
$L\in\NN$. 
Moreover, for all 
$m\in\NN:$ 
$(X_1,\ldots,X_d)\in(\KK^{\mtm})^d$ 
is in the domain of regularity of 
$\fR$ if and only if  
$\det\left(I_{Lm}-X_1\otimes A_1-\ldots-
X_d\otimes A_d\right)\ne0;$ in that case
\begin{equation*}
\fR(X_1,\ldots,X_d)=
I_m\otimes D+(I_m\otimes C)
\Big( I_{Lm}-\sum_{k=1}^d
X_k\otimes A_k\Big)^{-1}\sum_{k=1}^d
X_k\otimes B_k.
\end{equation*}
\end{introthm}
Here a realization is called minimal if the state space dimension 
$L$ 
is as small as possible; equivalently,   
the realization is observable, i.e., 
$$\bigcap_{0\le k}
\bigcap_{\,1\le i_1,\ldots,i_k\le d}\ker(CA_{i_1}\cdots A_{i_k})
=\{\ul{0}\},$$ and controllable, i.e.,
$$\bigvee_{0\le k}
\bigvee_{\,1\le i_1,\ldots, i_k,j\le d,} A_{i_1}\cdots A_{i_k}B_j=\KK^L.$$
Theorem 
\ref{thm:realization} 
is strongly related to  
expansions of nc rational functions which are regular at 
$\ul{0}$ 
into formal nc power series around 
$\ul{0}$; 
that is why it is \textbf{not} 
applicable for all nc rational functions. 
For example, the nc rational expression
$R(x_1,x_2)=(x_1x_2-x_2x_1)^{-1}$
is not defined at $\ul{0}$, nor at any 
pair
$(y_1,y_2)\in\KK^2$, 
therefore one can not consider 
realizations of 
$R$
which are centred at 
$\ul{0}$ 
as in Theorem 
\ref{thm:realization}, nor at any scalar point 
(a tuple of scalars). A realization theory for such 
expressions (and hence functions) 
is required  in particular for all of the applications 
mentioned above.  Such a theory is 
presented here, using the ideas of the general 
theory of nc functions.

The theory of \textbf{nc functions} has its roots in the works by Taylor \cite{T1,T2} on noncommutative
spectral theory. 
It was further developed by Voiculescu 
\cite{VDN,Voi04,Voi09} 
and Kalyuzhnyi-Verbovetskyi--Vinnikov \cite{KV1},
including a detailed discussion on nc difference-differential calculus.
The main underlying idea is that
a function of 
$d$ 
non-commuting variables is a function of 
$d-$tuples of square matrices of all sizes
that respects direct sums and simultaneous similarities.
See also
the work of Helton--Klep--McCullough 
\cite{HKMcC1,HKMcC2}, 
of Popescu 
\cite{Po06,Po10}, 
of Muhly--Solel 
\cite{MS}, and of Agler--McCarthy 
\cite{AgMcC1,AgMcC2,AgMcC6}.
A crucial fact 
\cite[Chapters 4-7]{KV1} 
is that nc functions admit power series expansions,
called Taylor--Taylor series in honor of 
Brook Taylor and of Joseph L. Taylor,
around an arbitrary
matrix point in their domain.
This motivates us to generalize realizations 
as in Theorem 
\ref{thm:realization}
to the
case where the centre is a 
$d-$tuple of matrices
rather than 
$\ul{0}$ 
or a 
$d-$tuple of scalars.
\\

This is the first in a series of papers with the goal
of generalizing the theory of (Fornasini--Marchesini) realizations centred at $\ul{0}$ (or at a scalar point), to the case of (Fornasini--Marchesini) realizations centred at an arbitrary matrix point in the domain of regularity of a nc rational function. 
In particular, we present a generalization of Theorem
\ref{thm:realization} 
(see Theorem  
\ref{thm:realization2} below) 
namely the existence and uniqueness of a minimal realization,
together with the inclusion of the domain of the nc rational function in the domain of any of its minimal realizations. (The other inclusion and hence the equality of the two domains is presented in a follow-up paper
\cite{PV2})

Other types of realizations of nc rational functions that 
are not necessary regular at 
$\ul{0}$ 
have been considered in 
\cite{CR1,CR2}  
and in 
\cite{V2}, 
see also the recent papers
\cite{Schr1,Schr2,Schr3,Schr4}.
We will consider further the relation between our representations and those of
\cite{CR1,CR2}
in our follow-up paper
\cite{PV2}.
\\

\uline{Here is an \textbf{outline} of the paper}:
In Section
\ref{sec:preliminaries} we give
some preliminaries on nc rational functions and evaluations over general algebras.

In Section
\ref{sec:real} we present the setting of nc Fornasini--Marchesini realizations centred at a matrix point
$\ul{Y}\in(\KK^{\sts})^d$ and generalize classical results which are well known in the scalar case ($s=1$) to the case where $s\ge1$. We prove, using synthesis, the existence of such realizations for any nc rational \textbf{expression} (Theorem
\ref{thm:21Ap17a}), and
introduce the terms of observability and controllability (Subsection
\ref{subsec:ContObs}) 
analogously to the scalar case as in
\cite{KAF}.
The uniqueness of minimal realizations, up to unique similarity, is then proved
(Theorems \ref{thm:2May16b} and \ref{thm:9May19a}), followed by
 a Kalman decomposition argument
(Theorem
\ref{thm:2May16a}). An example of an explicit construction of a minimal realization is presented in Subsection
\ref{subsec:ex} for the nc rational expression 
$(x_1x_2-x_2x_2)^{-1}$.
During the whole section we carry on 
the results also in a more generalized settings 
of evaluations  w.r.t arbitrary unital stably finite 
$\KK-$algebra; as a corollary we obtain 
a new proof of a theorem  of Cohn that equivalence of two rational expressions over matrices implies their equivalence over all stably finite algebras 
(Theorem \ref{thm:22Oct18a}). 
Finally, in Subsection
\ref{subsec:McMillan}
we define the McMillan degree of a nc rational expression using  minimal Fornasini--Marchesini realizations and show that it does not depend on the centre of the realization.

Section 
\ref{sec:main}
contains the main result of the paper, that is a partial  generalization of
Theorem
\ref{thm:realization} 
for  nc rational \textbf{functions} not necessary regular at a scalar point: \begin{introthm}{Theorem}
[Theorem
\ref{thm:30Jan18c}, Corollary \ref{cor:13Feb18a}]
\label{thm:realization2}
If 
$\fR$ 
is a nc rational function of 
$x_1,\ldots,x_d$ 
over 
$\KK$, 
then for every 
$\ul{Y}=(Y_1,\ldots,Y_d)\in dom_s(\fR)$ 
there exists a unique 
(up to unique similarity) 
minimal (observable and controllable) 
nc Fornasini--Marchesini realization
\begin{align*}
\cR_{\cF\cM}(X_1,\ldots,X_d)= D+C
\Big(I_{L}-\sum_{k=1}^d 
\bA_k(X_k-Y_k)\Big)^{-1}
\sum_{k=1}^d \bB_k(X_k- Y_k)
\end{align*}
centred at 
$\ul{Y}$, 
such that for every 
$m\in\NN$ 
and 
$(X_1,\ldots,X_d)\in dom_{sm}(\fR)$:
\begin{multline*}
\fR(X_1,\ldots,X_d)
=I_m\otimes D+(I_m\otimes C)
\Big(I_{Lm}-\sum_{k=1}^d 
(X_k-I_m\otimes Y_k)\bA_k\Big)^{-1}
\sum_{k=1}^d (X_k-I_m\otimes Y_k)\bB_k.
\end{multline*}
Moreover, using the realization 
$\cR_{\cF\cM}$
we can evaluate $\fR$ 
on every matrix point in the domain of regularity of
$\fR$
as well as w.r.t any unital stably finite $\KK-$algebra.
\end{introthm}
The strength 
of Theorem
\ref{thm:30Jan18c}
is that we can evaluate any nc rational function on all of its domain and also w.r.t any unital stably finite
$\KK-$algebra, by using a minimal realization of any nc rational expression which represents the function, that is centred at any point from its domain. %
As a corollary (Corollary \ref{cor:14Oct18a}) we  provide a
proof of Theorem 
\ref{thm:realization} 
which--- unlike the
original proof in 
\cite{KV4}---
does not make any use of the difference-differential calculus of nc functions, but  only the results from Sections
\ref{sec:real}
and \ref{sec:main}.

Generalizations of the main results from Sections 
\ref{sec:real}
and
\ref{sec:main}
to the matrix valued nc rational functions  are briefly summarized in Section
\ref{sec:mvf}. 

Finally, in Section
\ref{sec:hermitian}
we provide a full and precise parameterization ((\ref{eq:1Feb19a}) in Theorem \ref{thm:22Feb19a})
of hermitian nc rational functions in terms of their minimal nc Fornasini--Marchesini realizations centred at a matrix point. A short discussion and some parameterizations are given for descriptor realizations as well.
\\
One of the difficulties which arises when
moving from a scalar to a matrix centre,  is that a minimal nc Fornasini--Marchesini realization 
$\cR_{\cF\cM}$
of a nc rational expression is no longer a nc rational expression 
by itself (cf. Remark
\ref{rem:22Sep18a}).  
However, in the sequel paper
\cite{PV2},
we show that under some constraints (called the linearized lost abbey conditions) on the coefficients of the realization---
which follow immediately when $\cR_{\cF\cM}$ 
is a minimal nc Fornasini--Marchesini realization of a nc rational expression---
$\cR_{\cF\cM}$ 
is actually the restriction of a nc rational function
$\fR$ with 
$DOM_s(\cR_{\cF\cM})=dom_s(\fR)$. This will imply  the opposite  inclusion of the domains in Theorem 
\ref{thm:realization2} 
and thereby complete the proof that the domain of a nc rational function coincides with the domain of any of its minimal realizations, centred at an arbitrary matrix point. 
As a corollary, also in 
\cite{PV2}, we will prove that the domain of a nc rational function is equal to its stable extended domain.
In a slightly different direction, we will use the the theory of realizations with a matrix centre developed in this paper, together with the results from %
\cite{PV2},  
to present
an explicit construction of the free skew field
$\KK\plangle\ul{x}\prangle$,
with a self-contained proof that it is the universal skew field of fractions of the ring of nc polynomials. 
Moreover, we will construct a functional model and use it to provide a different one step proof for the existence of a realization formula for nc rational functions, without using synthesis. Furthermore, we will establish a generalization of the Kronecker--Fliess theorem, which gives a full characterization of nc rational functions in terms of their formal nc generalized power series expansions around a matrix point. These results will appear in
\cite{PV3}.
\\
Finally, we point out that instead 
of working with Fornasini--Marchesini realizations
(for the settings in the 
commutative original version see
\cite{FM1,FM2}) 
one can consider structured realizations
as in 
\cite{BGM1} and obtain similar results. 
This is true also for descriptor realizations; for more details see Remark
\ref{rem:9May19b}.
\\\\

\textbf{Acknowledgments.}
The authors would like to thank Joseph Ball, 
Bill Helton,
Dmitry Kalyuzhnyi-Verbovetskyi, Roland Speicher  and Juri Vol\v{c}i\v{c}
for their helpful comments and discussions.
The authors would also like to thank the referees for their valuable comments which helped to
improve the paper.
\section{Preliminaries}
\label{sec:preliminaries}
\textbf{Notations:} 
$d$ 
will stand for the number of non-commuting variables, 
which will be usually denoted by
$x_1,\ldots,x_d$, we often abbreviate non-commuting by nc.
For a positive integer 
$d$, 
we denote by 
$\cG_d$ 
the free monoid generated by 
$d$ 
generators 
$g_1,\ldots,g_d$,
we say that a word 
$\omega=g_{i_1}\ldots g_{i_\ell}\in\cG_d$ 
is of length 
$|\omega|=\ell$
if $\ell\ge1$ and 
$\omega=\emptyset$ 
is of length 
$0$.
For a field
$\KK$ 
and 
$n\in\NN$, let 
$\KK^{\ntn}$ 
be the vector space of 
$\ntn$ 
matrices over 
$\KK$, 
let 
$\{\ul{e}_1,\ldots,\ul{e}_n\}$ 
be the standard basis of 
$\KK^n$ 
and let 
$\cE_n=\left\{ E_{ij}=\ul{e}_i\ul{e}_j^T: 
1\le i,j\le n\right\}$ 
be the standard basis of 
$\KK^{\ntn}$.
The tensor (Kronecker) product of two matrices 
$P\in\KK^{n_1\times n_2}$
and
$Q\in\KK^{n_3\times n_4}$ 
is  the 
$n_1n_3\times n_2n_4$ 
block matrix 
$P\otimes Q=
\begin{bmatrix}
p_{ij}Q\\
\end{bmatrix}_{1\le i\le n_1,1\le j\le n_2}.$  
The range of a matrix 
$P$, 
that is the span of all of its columns, denoted by 
$\Ima(P)$.

We denote operators on matrices by bold letters such as 
$\bA,\bB$, 
and the action of 
$\bA$ 
on 
$X$ by 
$\bA(X)$. If $\bA$ is defined on 
$\sts$ matrices we extend 
$\bA$ to act on
$sm\times sm$ 
matrices for any 
$m\in\NN$, 
by viewing an
$sm\times sm$ 
matrix 
$X$ 
as an 
$\mtm$ 
matrix with 
$\sts$ 
blocks and by evaluating
$\bA$ 
on the 
$\sts$ 
blocks; In that case we denote the evaluation by 
$(X)\bA$. 
If 
$C$ 
is a constant matrix and 
$\bA$ 
is an operator, then 
$C\cdot\bA$ 
and 
$\bA\cdot C$ 
are two operators, defined by  
$(C\cdot\bA)(X):=C\bA(X)$ 
and 
$(\bA\cdot C)(X):=\bA(X) C$.
For every 
$n_1,n_2\in\NN$, we define the permutation matrix
$$E(n_1,n_2)=
\begin{bmatrix}
E_{ij}^T\\
\end{bmatrix}_{1\le i\le n_1,1\le j\le n_2}\in\KK^{n_1n_2
\times n_1n_2}$$
and use these matrices to change the order of factors in the Kronecker product of two matrices by the following rule 
\begin{align}
\label{eq:24Jan19a}
P\otimes Q=E(n_1,n_3)(Q\otimes P)E(n_2,n_4)^T,
\end{align}
for all
$n_1,n_2,n_3,n_4\in\NN,\,
Q\in\KK^{n_1\times n_2}
$
and
$P\in\KK^{n_3\times n_4}$;
for more details
see 
\cite[pp. 259--261]{HJ}. 
If 
$P=
\begin{bmatrix}
P_{ij}\\
\end{bmatrix}_{1\le i,j\le m}\in(\KK^{\sts})^{\mtm}$
and
$Q=
\begin{bmatrix}
Q_{ij}\\
\end{bmatrix}_{1\le i,j\le m}
\in(\KK^{\sts})^{\mtm}$, then we use the notation
$$P\odot_s Q:=\begin{bmatrix}
\sum_{k=1}^m P_{ik}\otimes Q_{kj}
\end{bmatrix}_{1\le i,j\le m}$$
for the so-called faux product of 
$P$ and $Q$,
viewed as
$\mtm$ matrices over the tensor algebra of 
$\KK^{\sts}$,
where 
$P_{ik}\otimes Q_{kj}$ 
denotes the element of 
$\KK^{\sts}\otimes \KK^{\sts}$, rather than the Kronecker product of the matrices; 
see 
\cite[page 241]{Pau} 
for the exact definition and 
\cite{Ef} 
for its origins in operator spaces.
If 
$\ul{X}=(X_1,\ldots, X_d)\in(\KK^{sm\times sm})^d$ 
and 
$\omega=g_{i_1}\ldots g_{i_{\ell}}$, then
\begin{align}
\label{eq:21Mar19a}
\ul{X}^{\odot_s\omega}:=X_{i_1}\odot_s\cdots\odot_s X_{i_{\ell}}.
\end{align}

We use
$\fR,\cR,R$ and
$\fr$ 
for nc rational function, nc Fornasini--Marchesini realization, nc
rational expression and matrix valued nc rational function, respectively.
Likewise, we use $\fa$ to denote elements in an algebra $\cA$
and
$\fA$ to denote matrices over $\cA$. 
Throughout the paper, we use underline to denote vectors or $d-$tuples.
\subsection{NC rational functions}
If 
$\cV$ 
is a vector space over a field
$\KK$, 
then  
$\cV_{nc}$, 
the nc space over 
$\cV$, 
consists of all square matrices over 
$\cV$, 
i.e., 
$$\cV_{nc}=\coprod_{n=1}^\infty 
\cV^{\ntn}.$$
For every 
$\Omega\subseteq \cV_{nc}$ 
and
$n\in\NN$ 
we use the notation 
$\Omega_n:=\Omega\cap \cV^{\ntn}$.
A subset 
$\Omega\subseteq\cV_{nc}$ 
is called a nc set if it is closed under direct sums, i.e., if
$X\in\Omega_n,Y\in\Omega_m$
then  
$X\oplus Y:=
\begin{bmatrix}
X&0\\0&Y\\
\end{bmatrix}\in\Omega_{n+m},
\forall m,n\in\NN$. 
In the special case where 
$\cV=\KK^d$, 
we have the identification
\begin{equation*}
\left(\KK^d\right)_{nc}=\coprod_{n=1}^\infty
(\KK^d)^{\ntn}\cong
\coprod_{n=1}^\infty 
(\KK^{\ntn})^d,
\end{equation*}
that is the nc space of all 
$d-$tuples of square matrices over 
$\KK$.
Let 
$\cV,\cW$ 
be vector spaces over a field 
$\KK$ 
and 
$\Omega\subseteq\cV_{nc}$ 
be an nc set, then 
$f:\Omega\rightarrow\cW_{nc}$ 
is called a
\textbf{nc function} if
$f$ 
is graded, i.e., if
$n\in\NN$
and
$X\in\Omega_n$, 
then
$f(X)\in\cW^{\ntn},$ and
\begin{itemize}
\item[1.]
$f$ 
respects direct sums, i.e., if 
$X,Y\in\Omega$, 
then
$f(X\oplus Y)=f(X)\oplus f(Y);$
\item[2.]
$f$ respects similarities, i.e.,  
if
$n\in\NN
,\,X\in\Omega_n$
and
$T\in\KK^{\ntn}$ 
is invertible such that 
$T\cdot X\cdot T^{-1}\in\Omega$, 
then
$f(T\cdot X\cdot T^{-1})=
T\cdot f(X)\cdot T^{-1}$.
\end{itemize}
Notice that if 
$X\in\Omega_n$ 
and 
$T\in\KK^{\ntn}$, 
by the products 
$T\cdot X$ 
and 
$X\cdot T$ 
we mean the standard matrix multiplication and we use the action of 
$\KK$ 
on 
$\cV$. 
In particular, if 
$\cV=\KK^d,\,
\ul{X}=(X_1,\ldots,X_d)\in(\KK^{\ntn})^d$
and
$T\in\KK^{\ntn}$, 
the products are given by
\begin{equation*}
T\cdot\ul{X}:=(TX_1,\ldots,TX_d)
\text{ and }\ul{X}\cdot 
T:=(X_1T,\ldots,X_dT).
\end{equation*}

An important and central example of nc
 functions are \textbf{nc rational expressions}. We denote by
$\KK\langle x_1,\ldots,x_d\rangle$ 
the 
$\KK-$algebra of nc polynomials in the 
$d$ 
nc variables 
$x_1,\ldots,x_d$ 
over 
$\KK$.
We obtain nc rational expressions by applying 
successive arithmetic operations 
(addition, multiplication and taking inverse)
on 
$\KK\langle x_1,\ldots,x_d\rangle$.
For a nc rational expression 
$R$ 
and 
$n\in\NN$, 
let 
$dom_n(R)$
be the set of all 
$d-$tuples of 
$\ntn$ 
matrices over 
$\KK$ 
for which all the inverses in 
$R$ 
exist; 
the \textbf{domain of regularity} of 
$R$ 
is then defined by
\begin{align*}
dom(R):=
\coprod_{n=1}^\infty dom_n(R).
\end{align*} 
A nc rational expression 
$R$
is called non-degenerate if
$dom(R)\ne\emptyset$.
For example,
$R(\ul{x})=
\left(x_2+(1-x_1)^{-1}
(x_3^{-1}x_1-x_2)\right)x_1$
is a nc rational expression in 
$x_1,x_2,x_3$, 
while its domain of regularity is given by
\begin{equation*}
dom(R)=\coprod_{n=1}^\infty
\big\{(X_1,X_2,X_3)
\in\left(\KK^{\ntn}\right)^3:
\det(I_n-X_1),\det(X_3)\ne0\big\}.
\end{equation*}
Every nc rational expression 
$R$ 
is a nc function from 
$dom(R)\subseteq(\KK^d)_{nc}$ 
to 
$\KK_{nc}$. For a detailed discussion of nc rational 
expressions and their domains of
regularity, see 
\cite{KV1}.

What comes now is the definition of a nc rational function.
Let 
$R_1$ 
and 
$R_2$ 
be nc rational expressions in 
$x_1,\ldots,x_d$ 
over 
$\KK$. 
We say that 
$R_1$ 
and 
$R_2$ 
are 
$(\KK^d)_{nc}-$evaluation equivalent,
if
$R_1(\ul{X})=R_2(\ul{X})$
for every 
$\ul{X}\in 
dom(R_1)\cap dom(R_2)$.
A 
\textbf{nc rational function} is an 
equivalence class of non-degenerate nc 
rational expressions. For every nc rational function 
$\fR$, 
define its 
\textbf{domain of regularity}
\begin{align}
\label{eq:9Mar19a}
dom(\fR):=\bigcup_{R\in\fR} dom(R).
\end{align}
The 
$\KK-$algebra of all nc rational functions of 
$x_1,\ldots,x_d$ 
over 
$\KK$ is denoted by 
$\KK\plangle x_1,\ldots,x_d\prangle$ 
and it is a skew field, called the free 
skew field. Moreover,
$\KK\plangle x_1,\ldots, x_d\prangle$  
is the universal skew field of fractions of 
$\KK\langle x_1,\ldots,x_d\rangle$. See 
\cite{AM66,Be70,Co71a,Co72,Row80} for the original proofs and  
\cite{Co06} 
for a more modern reference, while a proof of the equivalence with the evaluations over matrices is presented in 
\cite{KV3,KV4}. 
\subsection{Evaluations over algebras}
Let
$\cA$ be a unital 
$\KK-$algebra. If
$\ul{\fa}=(\fa_1,\ldots,\fa_d)\in\cA^d$ 
and 
$\omega=g_{i_1}\ldots g_{i_\ell}\in\cG_d$, 
then we use the notations 
$\ul{\fa}^\omega:=\fa_{i_1}\cdots 
\fa_{i_\ell}$ 
and
$\ul{\fa}^{\emptyset}=1_{\cA}$, where $1_{\cA}$ is the unit element in $\cA$. 
We recall the definitions of evaluation and 
domain of nc rational expressions over 
$\cA$. 
For more details see
\cite{HMS}.
\begin{definition}[$\cA-$Domains and Evaluations]
For any nc rational expression 
$R$ 
in 
$x_1,\ldots,x_d$ 
over 
$\KK$, 
its 
$\cA-$domain
$dom^{\cA}(R)\subseteq \cA^d$ 
and its evaluation 
$R^{\cA}(\ul{\fa})$ 
at any
$\ul{\fa}=(\fa_1,\ldots,\fa_d)
\in dom^{\cA}(R)$ 
are defined by:
\begin{enumerate}
\item[1.]
If
$R=\sum_{\omega\in\cG_d}
r_{\omega}\ul{x}^\omega$
is a nc polynomial ($r_\omega\in\KK$), 
then 
\begin{equation*}
dom^{\cA}(R)=\cA^d 
\text{ and } 
R^{\cA}(\ul{\fa})=\sum_{\omega\in\cG_d}
r_\omega \ul{\fa}^\omega.
\end{equation*}
\item[2.]
If 
$R=R_1R_2$
where 
$R_1$
and
$R_2$ 
are nc rational expressions, then 
\begin{equation*}
dom^{\cA}(R)=
dom^{\cA}(R_1)\cap dom^{\cA}(R_2) 
\text{ and } 
R^{\cA}(\ul{\fa})=
R_1^{\cA}(\ul{\fa})R_2^{\cA}(\ul{\fa}).
\end{equation*}
\item[3.]
If 
$R=R_1+R_2$ where $R_1$
and
$R_2$ are nc rational expressions, then 
\begin{equation*}dom^{\cA}(R)=
dom^{\cA}(R_1)\cap dom^{\cA}(R_2) 
\text{ and } 
R^{\cA}(\ul{\fa})=
R_1^{\cA}(\ul{\fa})+ R_2^{\cA}(\ul{\fa}).
\end{equation*}
\item[4.]
If
$R=R_1^{-1}$
where
$R_1$
is a nc rational expression, then
\begin{equation*}dom^{\cA}(R)=
\left\{ \ul{\fa}\in 
dom^{\cA}(R_1): R_1^{\cA}(\ul{\fa})
\text{ invertible in }\cA\right\} 
\text{ and } 
R^{\cA}(\ul{\fa})=
\big(R_1^{\cA}(\ul{\fa})\big)^{-1}.
\end{equation*}
\end{enumerate}
\end{definition}

\begin{remark}
\label{rem:20Mar19a}
Let 
$n\in\NN$
and consider the 
$\KK-$algebra 
$\cA_n=\KK^{\ntn}$. 
Then, it is easily seen that
$dom^{\cA_n}(R)=dom_n(R)$
and 
$R(\ul{\fA})=R^{\cA_n}(\ul{\fA})$
for every 
$\ul{\fA}\in dom_n(R)$.
\end{remark} 

As it will be pointed out later (cf. Theorem
\ref{thm:2May16a}), we are interested in a certain family of algebras, called stably finite algebras. 
A unital 
$\KK-$algebra 
$\cA$ 
is called 
\textbf{stably finite} 
if for every 
$m\in\NN$ 
and 
$\fA,\fB\in\cA^{\mtm}$, we have
\begin{equation*}
\fA\fB=I_m\otimes 1_{\cA}
\Longleftrightarrow
\fB\fA=I_m\otimes 1_{\cA}.
\end{equation*} 
If $\cA$ 
is a unital 
$\CC^*-$algebra with a faithful trace, then 
$\cA$ 
is stably finite. The following is a 
characterization of stably 
finite algebras that we find useful in a later stage of the paper; 
see 
\cite[Lemma 5.2]{HMS}
for its proof.
\begin{lemma}
\label{lem:19Oct18a}
Let
$\cA$ 
be a unital 
$\KK-$algebra. 
The following are equivalent:
\begin{itemize}
\item[1.]
$\cA$ is stably finite.
\item[2.]
For every 
$n\in\NN,\,
m_1,\ldots,m_n\in\NN$ 
and 
$\fA_{i,j}\in\cA^{m_i\times m_j},\, 
i,j=1,\ldots,n$, 
if the upper (or lower) 
triangular block matrix
$$\begin{bmatrix}
\fA_{11}&*&*&*\\
0&\fA_{22}&*&*\\
\vdots&\ddots&\ddots&*\\
0&\ldots&0&\fA_{nn}\\
\end{bmatrix}$$
is invertible, then 
the matrices
$\fA_{11},\ldots,\fA_{nn}$ 
are invertible.
\end{itemize}
\end{lemma}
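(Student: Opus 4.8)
The plan is to prove the two implications $(1)\Rightarrow(2)$ and $(2)\Rightarrow(1)$ separately; in both directions everything reduces to the defining property of stable finiteness --- direct finiteness of a single matrix ring $\cA^{m\times m}$ --- applied to a carefully chosen invertible block-triangular matrix over $\cA$.

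First, $(1)\Rightarrow(2)$. Assume $\cA$ is stably finite and let $M=[\fA_{ij}]$ be an invertible block-upper-triangular matrix with $\fA_{ij}\in\cA^{m_i\times m_j}$ and $\fA_{ij}=0$ for $i>j$. Write $M^{-1}=[\fC_{ij}]$ in the conforming block form. Since the first block-column of $M$ is $(\fA_{11},0,\dots,0)^{T}$, comparing the $(1,1)$ block of $M^{-1}M=I$ gives $\fC_{11}\fA_{11}=I_{m_1}$, so by the defining property of stable finiteness applied with parameter $m_1$ the matrix $\fA_{11}\in\cA^{m_1\times m_1}$ is invertible (with inverse $\fC_{11}$). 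Factoring $M=\mathrm{diag}(\fA_{11},I)\cdot U$ with $U$ block-upper-triangular having identity $(1,1)$ block, and then peeling off the first block row and column of $U$, one sees that the trailing $(n-1)$-block principal submatrix of $M$ is again invertible; an induction on $n$ then yields invertibility of $\fA_{22},\dots,\fA_{nn}$ as well. The lower-triangular version of (2) follows from the upper-triangular one by transposition, which reverses products and hence preserves invertibility while swapping upper and lower triangularity, so no separate argument is needed.

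Next, $(2)\Rightarrow(1)$. Assume (2), fix $m\in\NN$ and $\fA,\fB\in\cA^{m\times m}$ with $\fA\fB=I_m$; we must show $\fB\fA=I_m$. Put $e:=I_m-\fB\fA$. Using only $\fA\fB=I_m$ one checks the identities $e^2=e$, $\fA e=0$, $e\fB=0$, and $\fB\fA=I_m-e$. The crucial observation is that the block-upper-triangular matrix
$$M=\begin{bmatrix}\fB & e\\ 0 & \fA\end{bmatrix}\in\cA^{2m\times 2m}$$
is actually invertible: a direct computation from the four identities above shows that $\begin{bmatrix}\fA & 0\\ e & \fB\end{bmatrix}$ is a two-sided inverse of $M$. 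Applying (2) with $n=2$ and $m_1=m_2=m$, we conclude that both diagonal blocks $\fB$ and $\fA$ are invertible in $\cA^{m\times m}$; since $\fA$ is invertible and $\fA\fB=I_m$, this forces $\fB=\fA^{-1}$, hence $\fB\fA=I_m$. As $m$ was arbitrary, $\cA$ is stably finite.

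The only step that is not pure bookkeeping is guessing the right matrix in $(2)\Rightarrow(1)$: the naive candidates $\begin{bmatrix}\fA & \ast\\ 0 & \fB\end{bmatrix}$ cannot work, because the $(2,2)$ block of a product of block-upper-triangular matrices is always the product of their $(2,2)$ blocks, so one could never force $I_m$ into the bottom-right corner without already knowing $\fB\fA=I_m$. Inserting the idempotent $e=I_m-\fB\fA$ in the off-diagonal corner is exactly what makes a compensating inverse exist, and once $M$ is written down the remaining verifications are routine. It is also worth noting that the rectangular blocks $\fA_{ij}\in\cA^{m_i\times m_j}$ with possibly distinct $m_i$ allowed in (2) play no role here: both directions only ever invoke (2) for square diagonal blocks of a single size.
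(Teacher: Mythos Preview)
The paper does not give its own proof of this lemma; it simply cites \cite[Lemma 5.2]{HMS}. Your argument is correct and self-contained.

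A couple of minor remarks. In $(1)\Rightarrow(2)$, the step ``peeling off the first block row and column of $U$'' deserves one more sentence: once $U=\begin{bmatrix}I&*\\0&M'\end{bmatrix}$ is invertible, writing out both $UU^{-1}=I$ and $U^{-1}U=I$ blockwise shows the $(2,1)$ block of $U^{-1}$ vanishes and the $(2,2)$ block is a two-sided inverse of $M'$, with no appeal to stable finiteness needed at that point. Your remark that the lower-triangular case ``follows by transposition'' is slightly glib over a noncommutative $\cA$: matrix transpose is only an anti-isomorphism into matrices over $\cA^{\mathrm{op}}$, so one has to note that stable finiteness passes between $\cA$ and $\cA^{\mathrm{op}}$. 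A cleaner reduction, staying over $\cA$, is to conjugate a block lower-triangular matrix by the block-reversal permutation, which turns it into a block upper-triangular one with the same diagonal blocks in reverse order.

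In $(2)\Rightarrow(1)$ your construction with the idempotent $e=I_m-\fB\fA$ is exactly the right trick, and the verification that $\begin{bmatrix}\fA&0\\e&\fB\end{bmatrix}$ is a two-sided inverse of $\begin{bmatrix}\fB&e\\0&\fA\end{bmatrix}$ goes through as claimed using $\fA e=0$, $e\fB=0$, $e^2=e$, and $\fB\fA+e=I_m$. Your closing observation about why the naive candidate $\begin{bmatrix}\fA&*\\0&\fB\end{bmatrix}$ cannot work is a nice touch.
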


\section{Realizations of NC Rational 
Expressions}
\label{sec:real}
Non-commutative Fornasini--Marchesini 
realizations, see 
\cite{BGM1,KV4} 
and 
\cite{FM1,FM2} 
for the original commutative version, 
apply to nc rational expressions 
which are regular at 
$\ul{0}$. 
By translation, the point 
$\ul{0}$ 
can be replaced by any scalar point. 
In this section we
develop analogous realization formulas 
for nc rational expressions, centred 
at an arbitrary 
\textbf{matrix point} in the domain 
of regularity of the expression. 
\begin{definition}
\label{def:13Apr19a}
Let 
$s,L\in\NN,\,
\ul{Y}=(Y_1,\ldots,Y_d)\in(\KK^{\sts})^d$, 
\begin{align*}
\bA_1,\ldots,\bA_d:\KK^{\sts}
\rightarrow\KK^{L\times L}
\text{ and }
\bB_1,\ldots,\bB_d:\KK^{\sts}
\rightarrow\KK^{L\times s}
\end{align*} 
be linear mappings, 
$C\in\KK^{s\times L}$ 
and
$D\in\KK^{\sts}$. 
Then
\begin{equation}
\label{eq:18Aug18a}
\cR(X_1,\ldots,X_d)= 
D+C\Big(I_{L}-\sum_{k=1}^d 
\bA_k(X_k-Y_k)\Big)^{-1}
\sum_{k=1}^d 
\bB_k(X_k- Y_k)
\end{equation}
is called a 
\textbf{nc Fornasini--Marchesini realization} 
centred at 
$\ul{Y}$
and it is defined for every 
$\ul{X}=(X_1,\ldots,X_d)\in DOM_s(\cR)$,
where
\begin{equation*}
DOM_s(\cR):=\Big\{ 
\ul{X}\in(\KK^{\sts})^d:\det
\Big(
I_L-\sum_{k=1}^d
\bA_k(X_k-Y_k)
\Big)\ne0
\Big\}.
\end{equation*}
In that case we say that the realization 
$\cR$ 
is \textbf{described by the tuple} 
$(L,D,C,\ul{\bA},\ul{\bB})$.
\end{definition}
\begin{remark}
\label{rem:22Sep18a}
If
$s=1$, 
then 
$\cR$
is a 
$1\times1$ 
matrix valued nc rational expression 
(see Remark 
\ref{rem:20Mar19b} 
for details) and 
$DOM_s(\cR)=dom_s(\cR)$.
However, this is not the case for
$s>1$ 
and that is why we use the notation 
$DOM_s(\cR)$
instead of 
$dom_s(\cR)$. 
\end{remark}
Let
$s_1,s_2,s_3,s_4\in\NN$. 
If 
$\bT:\KK^{s_1\times s_2}
\rightarrow
\KK^{s_3\times s_4}$ 
is a linear mapping and 
$m\in\NN$, 
then 
$\bT$ 
can be naturally extended to a linear mapping 
$\bT:\KK^{s_1m\times s_2m}
\rightarrow
\KK^{s_3m\times s_4m}$, 
by the following rule:
\begin{equation*}  
X=
\begin{bmatrix}
X_{ij}\\
\end{bmatrix}_{1\le i,j\le m}
\in\KK^{s_1m\times s_2m}
\Longrightarrow
(X)\bT=
\begin{bmatrix}
\bT(X_{ij})\\
\end{bmatrix}_{1\le i,j\le m},
\end{equation*}
i.e., 
$(X)\bT$  
is an 
$\mtm$ 
block matrix with entries in 
$\KK^{s_3\times s_4}$.
Therefore, we can extend the realization
(\ref{eq:18Aug18a}) 
to act on 
$d-$tuples of 
$sm\times sm$ 
matrices: for every 
$\ul{X}=(X_1,\ldots,X_d)$ 
in
\begin{equation*}
DOM_{sm}(\cR):=
\Big\{\ul{X}
\in(\KK^{sm\times sm})^d:
\det\Big(I_{Lm}-\sum_{k=1}^d 
(X_k-I_m\otimes Y_k)
\bA_k\Big)\ne0
\Big\},
\end{equation*}
define
\begin{equation*}
\cR(\ul{X}):=
I_m\otimes D+
(I_m\otimes C)
\Big(
I_{Lm}-\sum_{k=1}^d 
(X_k-I_m\otimes Y_k)
\bA_k\Big)^{-1}
\sum_{k=1}^d 
(X_k-I_m\otimes Y_k)\bB_k.
\end{equation*}

In addition, if
$\cA$
is a unital 
$\KK-$algebra, a linear mapping
$\bT:\KK^{s_1\times s_2}\rightarrow
\KK^{s_3\times s_4}$
can be also naturally extended to a linear mapping
$\bT^{\cA}:\cA^{s_1\times s_2}
\rightarrow\cA^{s_3\times s_4}$ 
by the following rule:
\begin{equation*}
\fA
=\sum_{i=1}^{s_1}
\sum_{j=1}^{s_2} 
E_{ij}\otimes \fa_{ij}
\in\cA^{s_1\times s_2}
\Longrightarrow (\fA)\bT^{\cA}
=\sum_{i=1}^{s_1}
\sum_{j=1}^{s_2}
\bT(E_{ij})
\otimes \fa_{ij}\in
\cA^{s_3\times s_4},
\end{equation*}
where
$E_{ij}=\ul{e}_i\ul{e}_j^T
\in\KK^{s_1\times s_2}$
and
$\fa_{ij}\in\cA$.
If 
$\cR$ 
is a nc Fornasini--Marchesini realization centred at 
$\ul{Y}$, 
as in 
(\ref{eq:18Aug18a}), 
define its 
\textbf{$\cA-$domain} 
to be the subset of 
$(\cA^{\sts})^d$
given by
\begin{equation*}
DOM^{\cA}(\cR)
:=\Big\{
\ul{\fA}\in(\cA^{\sts})^d:
\Big(I_L\otimes 1_{\cA}-\sum_{k=1}^d
(\fA_k-Y_k\otimes 1_{\cA})
\bA_k^{\cA}\Big)
\text{ is invertible in }
\cA^{L\times L}\Big\}
\end{equation*}
and for every 
$\ul{\fA}=(\fA_1,\ldots,\fA_d)\in 
DOM^{\cA}(\cR)$ 
define 
the evaluation of 
$\cR$ 
at 
$\ul{\fA}$
by 
\begin{multline*}
\cR^{\cA}(\ul{\fA}):=
D\otimes 1_\cA
+(C\otimes 1_\cA)
\Big(I_L\otimes 
1_\cA-\sum_{k=1}^d 
\big[(\fA_k)\bA_k^{\cA}-
\bA_k(Y_k)\otimes 1_\cA\big]
\Big)^{-1}\\
\sum_{k=1}^d 
\big[(\fA_k)\bB_k^{\cA}-
\bB_k(Y_k)\otimes 1_\cA\big].
\end{multline*}
\subsection{Existence}
\label{subsec:synthesis}
The way we define what is a realization of 
a nc rational expression is
different than the usual definition. 
In the usual case, the expression and the 
realization coincide whenever they are 
both defined, while in our definition we include the fact 
that the domain of the expression is 
contained in the domain of the realization. 
We begin with the definition of a nc 
rational expression 
admitting a realization, both in the 
usual way (over matrices) and in 
the case of evaluations w.r.t an algebra. 
\begin{definition}
\label{def:25Sep18a}
Let 
$R$ 
be a nc rational expression in
$x_1,\ldots,x_d$ 
over 
$\KK,\,\ul{Y}=(Y_1,\ldots,Y_d)
\in dom_s(R)$,
$\cR$ 
be a nc Fornasini--Marchesini realization centred at
$\ul{Y}$ 
and 
$\cA$ 
be a unital
$\KK-$algebra. 
We say that:
\begin{enumerate}
\item[1.] 
$R$ 
\textbf{admits the realization} 
$\cR$, 
or that 
$\cR$ 
is a realization of 
$R$, 
if 
\begin{align*}
dom_{sm}(R)\subseteq DOM_{sm}(\cR)
\text{ and }R(\ul{X})=\cR(\ul{X}),\,
\forall \ul{X}\in dom_{sm}(\cR)
\end{align*} 
for every
$m\in\NN$.
\item[2.]
$R$
\textbf{admits the realization 
$\cR$ 
with respect to (w.r.t)}
$\cA$,
or that
$\cR$ 
is a realization of 
$R$ 
w.r.t 
$\cA$, 
if for every
$\ul{\fa}=(\fa_1,\ldots,\fa_d)\in 
dom^{\cA}(R)$:
\begin{align*}
I_s\otimes\ul{\fa}:=
(I_s\otimes\fa_1,\ldots,I_s\otimes\fa_d)
\in DOM^{\cA}(\cR)
\end{align*}
and
$I_s\otimes 
R^{\cA}(\ul{\fa})=
\cR^{\cA}(I_s\otimes\ul{\fa})$.
\end{enumerate}
\end{definition}
We begin by showing the existence of 
a nc Fornasini--Marchesini
realization for every nc rational 
expression 
$R$, 
centred at any 
$\ul{Y}\in dom_s(R)$, 
that is also a realization of 
$R$
w.r.t any unital
$\KK-$algebra.
\begin{theorem}
\label{thm:21Ap17a}
Let 
$R$
be a nc rational expression in
$x_1,\ldots,x_d$ 
over 
$\KK$
and let 
$\ul{Y}=(Y_1,\ldots,Y_d)\in dom_s(R)$. 
There exists a nc Fornasini--Marchesini realization 
$\cR$ 
of
$R$
centred at 
$\ul{Y}$, 
such that 
$\cR$
is a realization of 
$R$
w.r.t any 
unital $\KK-$algebra. 
\end{theorem}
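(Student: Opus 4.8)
The plan is to prove the theorem by induction on the structure (the "synthesis") of the nc rational expression $R$, exactly mirroring the classical scalar-centre construction but carrying the book-keeping for a matrix centre $\ul Y\in(\KK^{\sts})^d$. The base case handles polynomials; the inductive step handles the three arithmetic operations $R_1+R_2$, $R_1R_2$, and $R_1^{-1}$. At each stage one must produce a tuple $(L,D,C,\ul\bA,\ul\bB)$ describing a realization $\cR$ centred at $\ul Y$, verify the domain inclusion $dom_{sm}(R)\subseteq DOM_{sm}(\cR)$ for every $m$, verify the identity $R(\ul X)=\cR(\ul X)$ on $DOM_{sm}(\cR)$, and simultaneously verify the algebra-evaluation statement of Definition~\ref{def:25Sep18a}(2) for every unital $\KK$-algebra $\cA$. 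The key observation that makes the matrix-centre case work like the scalar case is that the realization is \emph{affine in $(X_k-Y_k)$}: writing $Z_k:=X_k-Y_k$, the map $\ul X\mapsto\cR(\ul X)$ is a scalar-centre Fornasini--Marchesini expression in the shifted variables, so all the classical formulas for sums, products, and inverses of realizations go through once we re-expand $(X_k-Y_k)=(X_k'-Y_k')+\cdots$ whenever a sub-expression requires a different grouping. Because the only place matrices over $\cA$ enter is through the extensions $\bA_k^\cA,\bB_k^\cA$, and these extensions are by construction compatible with the block/tensor structure, the same algebraic manipulations prove the $\cA$-version in parallel with essentially no extra work.

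\textbf{Base case.} If $R$ is a nc polynomial, $R=\sum_{|\omega|\le N} r_\omega\ul x^\omega$, I would expand each monomial $\ul X^\omega$ around $\ul Y$: since $x_{i_1}\cdots x_{i_\ell}=(Y_{i_1}+Z_{i_1})\cdots(Y_{i_\ell}+Z_{i_\ell})$ expands into a sum of words in the $Y_{i_j}$'s and $Z_{i_j}$'s, $R$ itself becomes an honest (matrix-valued) nc polynomial in the $Z_k$'s with matrix coefficients, i.e., a polynomial $P(\ul Z)$ with $P(\ul 0)=R(\ul Y)=:D$. For such a polynomial the standard shift-register realization applies: take $L$ large enough to store all words of length $<N$, let $\bA_k$ be the shift that appends $g_k$, let $\bB_k$ encode the length-one contribution, and let $C$ read off the coefficients. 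Here $\bA_k,\bB_k$ must be taken as the \emph{linear maps} on $\KK^{\sts}$ that multiply the stored block by $Z_k=X_k-Y_k$ on the appropriate side; because a polynomial realization has nilpotent $\bA_k$'s, $I_L-\sum_k\bA_k(X_k-Y_k)$ is automatically invertible for \emph{all} $\ul X$, so $DOM_{sm}(\cR)=(\KK^{sm\times sm})^d\supseteq dom_{sm}(R)$, and the same nilpotency gives invertibility over any $\cA$. The identity $R(\ul X)=\cR(\ul X)$ is then a finite geometric-series computation.

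\textbf{Inductive step.} Given realizations $\cR_1\sim(L_1,D_1,C_1,\ul\bA^{(1)},\ul\bB^{(1)})$ of $R_1$ and $\cR_2\sim(L_2,D_2,C_2,\ul\bA^{(2)},\ul\bB^{(2)})$, both centred at $\ul Y$, I produce: for $R_1+R_2$ the block-diagonal/stacked realization $(L_1{+}L_2,\,D_1{+}D_2,\,[C_1\ C_2],\,\ul\bA^{(1)}\oplus\ul\bA^{(2)},\,[\ul\bB^{(1)};\ul\bB^{(2)}])$; for $R_1R_2$ the standard cascade/series-connection realization (upper-triangular $\bA$, with the off-diagonal coupling block $\bB^{(1)}_kD_2+\cdots$ and new output/input built from $C_1,C_2,D_1,D_2,\bB^{(1)}_k,\bB^{(2)}_k$, adjusted so the constant term is $D_1D_2$); and for $R_1^{-1}$ the classical inversion formula, valid because $\ul Y\in dom_s(R_1^{-1})$ forces $D_1=R_1(\ul Y)$ to be an invertible element of $\KK^{\sts}$ — the new realization is $(L_1,\,D_1^{-1},\,-D_1^{-1}C_1,\,\bA^{(1)}_k-\bB^{(1)}_kD_1^{-1}C_1,\,\bB^{(1)}_kD_1^{-1})$ with the scalar-in-$\KK$ multiplications interpreted via $C\cdot\bA$ and $\bA\cdot C$ as defined in the preliminaries. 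In each of the three cases I verify the two domain inclusions using, respectively, Schur-complement identities for the $2\times2$ block matrix $I-\sum_k\bA_k(X_k-Y_k)$ (for $+$ and $\cdot$) and the Woodbury/Schur identity relating $I_{L_1}-\sum_k(\bA^{(1)}_k-\bB^{(1)}_kD_1^{-1}C_1)(X_k-Y_k)$ to $D_1$ times the resolvent of the original — this last identity is precisely what lets a point in $dom_{sm}(R_1^{-1})$ land in $DOM_{sm}(\cR)$. Crucially, for the $\cA$-statement one needs the corresponding invertibility statement for triangular block matrices over $\cA$, which is exactly where \emph{stable finiteness enters}: Lemma~\ref{lem:19Oct18a} guarantees that invertibility of the triangular coupling matrix over a stably finite $\cA$ forces invertibility of the diagonal blocks — but note the theorem as stated asks only for \emph{existence of a realization w.r.t.\ any unital $\KK$-algebra}, which for the $+$ and $\cdot$ cases needs no finiteness (the direction needed is from diagonal-invertible to triangular-invertible, which is always true), and for the $R_1^{-1}$ case needs only that $R_1^{\cA}(\ul\fa)$ invertible implies the resolvent matrix invertible, again an always-true direction via the explicit Schur formula. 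So in fact the $\cA$-evaluation part of \emph{this} theorem is unconditional, and stable finiteness is only needed later (Theorem~\ref{thm:2May16a}) for the reverse implications.

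\textbf{Main obstacle.} The genuine difficulty, and the thing to be careful about, is the re-expansion of the centre when recursing: a sub-expression $R_i$ is given to us already centred at $\ul Y$, but in the scalar-centre proofs the natural normalization is "centred at $\ul 0$", and the bookkeeping that converts between "the realization written in $X_k-Y_k$" and "the realization of a product, whose natural variable is again $X_k-Y_k$" must be done so that the constant terms $D_i=R_i(\ul Y)$ compose correctly (e.g.\ $D_1D_2$ for a product, $D_1^{-1}$ for an inverse) and so that no spurious shift of centre is introduced. Concretely, one must check that the series-connection and inversion formulas, which classically are stated for realizations normalized at $0$, remain literally the same formulas when every occurrence of the variable is the shifted $X_k-Y_k$ — this is true precisely because, as noted above, $\ul X\mapsto\cR(\ul X)$ is a scalar-centre FM expression in $\ul Z=\ul X-\ul Y$, so one may do the entire induction "in the $\ul Z$ variables" and only at the very end observe $Z_k=X_k-Y_k$. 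Verifying this compatibility at each of the three operations, together with the matching domain-inclusion identities (the Schur complements) and their $\cA$-analogues, is the bulk of the work; everything else is the routine geometric-series verification of the evaluation identity.
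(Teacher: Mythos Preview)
Your proposal is correct and follows essentially the same synthesis (structural induction) approach as the paper, with the same block-diagonal, block-triangular, and Schur-complement formulas for $+$, $\cdot$, and $(\cdot)^{-1}$, and the same observation that the $\cA$-statement needs only the ``easy'' direction of invertibility and hence no stable-finiteness hypothesis. The only difference is cosmetic: the paper takes as base cases just the constants and the single variables $x_j$ (with explicit tiny realizations), rather than treating general polynomials via a shift-register in one shot---your route works too, but the paper's is simpler and avoids having to spell out the shift-register maps as linear operators $\KK^{\sts}\to\KK^{L\times L}$.
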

The proof is done by synthesis, which is going back to ideas from automata theory
\cite{BR,Schutz61,Schutz62b}
and system theory  
\cite{CR1,CR2}.
We also use the following technical fact: let
$\ul{X}=(X_1,\ldots,X_d)
\in\left(
\KK^{sm\times sm}\right)^d$ 
and write
\begin{align}
\label{eq:15Oct18a}
X_k=\sum_{i,j=1}^m
E_{ij}\otimes 
X^{(k)}_{ij},
\text{ with }E_{ij}\in\KK^{\mtm},\,
X^{(k)}_{ij}\in\KK^{\sts},
\,1\le k\le d
\end{align}
then
$(X_k-I_m\otimes Y_k)\bA_k=
\sum_{i,j=1}^m
E_{ij}
\otimes
\bA_k
(X^{(k)}_{ij})
-I_m\otimes
\bA_k(Y_k)$ 
and hence
\begin{align*}
(X_k-I_m\otimes Y_k)\bA_k
=P_{2}\Big(
\sum_{i,j=1}^m
\bA_k(X^{(k)}_{ij})
\otimes E_{ij}-
\bA_k(Y_k)\otimes I_m
\Big)P_{2}^{-1}
\end{align*}
and similarly
\begin{align*}
(X_k-I_m\otimes Y_k)\bB_k=P_{2}
\Big(
\sum_{i,j=1}^m
\bB_k
(X^{(k)}_{ij} )\otimes E_{ij}-
\bB_k(Y_k)
\otimes I_m 
\Big)P_1^{-1},
\end{align*}
where 
$P_1=E(m,s)$ and $P_2=E(m,L)$ 
are the shuffle matrices defined in
(\ref{eq:24Jan19a}). 
Therefore for every 
$\ul{X}\in DOM_{sm}(\cR)$ 
we have
\begin{multline}
\label{eq:17Feb19a}
P_1^{-1}\cR(\ul{X})P_1=
D\otimes I_m+P_1^{-1}(I_m\otimes C)
P_{2}
\Big(I_{Lm}-\sum_{k=1}^d
\Big[
\sum_{i,j=1}^m
\bA_k(X^{(k)}_{ij})
\otimes E_{ij}-
\bA_k(Y_k)\otimes I_m\Big]
\Big)^{-1}\\
\sum_{k=1}^d
\Big[\sum_{i,j=1}^m
\bB_k
(X^{(k)}_{ij} 
)
\otimes E_{ij}-
\bB_k(Y_k)\otimes I_m\Big]
=D\otimes I_m+(C\otimes I_m)
\Big(I_{Lm}-\sum_{k=1}^d
\Big[
\sum_{i,j=1}^m\\
\bA_k(X^{(k)}_{ij})
\otimes E_{ij}-
\bA_k(Y_k)\otimes I_m\Big]
\Big)^{-1}
\sum_{k=1}^d
\Big[\sum_{i,j=1}^m
\bB_k
(X^{(k)}_{ij})\otimes E_{ij}-
\bB_k(Y_k)\otimes I_m\Big].
\end{multline}

\begin{proof} 
We first show that the theorem is true for all monomials 
$x_1,\ldots,x_d$ 
and constants, then  we show that if 
it is true for two rational expressions, so it is also true for 
their summation, multiplication and their inversion, 
when they exist. 

\textbf{\uline{1.\,Constants:}}
Let 
$R_0(\ul{x})=K\in\KK$. 
If
$\ul{\fa}\in 
dom^{\cA}(R_0)
=\cA^d$, 
then 
$I_s\otimes 
\ul{\fa}\in DOM^{\cA}(\cR_0)
=(\cA^{\sts})^d$,
where the realization 
$\cR_0$ 
is centred at 
$\ul{Y}\in dom_s(R_0)=
(\KK^{\sts})^d$ 
and described by
\begin{equation}
\label{eq:20Mar19a}
L=1,\,D=I_s\otimes K,\,
C=0,\,
\bA_1=\ldots=\bA_d=0,\,
\bB_1=\ldots=\bB_d=0
\end{equation} 
and
$I_s\otimes R_0^{\cA}(\ul{\fa})
=I_s\otimes (K\otimes 1_{\cA})
=D\otimes 1_{\cA}=
\cR_0^{\cA}(I_s\otimes 
\ul{\fa})$.
Moreover,
\begin{align*}
dom_{sm}(R_0)=
(\KK^{sm\times sm})^d=
DOM_{sm}(\cR_0)
\end{align*}
and for every
$\ul{X}\in dom_{sm}(R_0)$ 
we have
$R_0(\ul{X})=
I_{sm}\otimes K
=I_m\otimes D
=\cR_0(\ul{X})$.

 \textbf{\uline{2.\,Monomials:}}
Let
$R_j(\ul{x})=x_j$ 
for 
$1\le j\le d$.
If
$\ul{\fa}\in dom^{\cA}(R_j)
=\cA^d$, 
then
$I_s\otimes\ul{\fa}
\in DOM^{\cA}(\cR_j)
=(\cA^{\sts})^d$,
where the realization 
$\cR_j$ 
is centred at 
$\ul{Y}\in dom_s(R_j)=
(\KK^{\sts})^d$ 
and described by
\begin{equation}
\label{eq:20Mar19b}
L=s,\,
D=Y_j,\,
C=I_s,\,
\bA_1=\ldots=\bA_d=0,\, 
\bB_j=Id,\, 
\bB_k=0\,
(\forall k\ne j)
\end{equation} 
and 
$$I_s\otimes 
 R_j^{\cA}(\ul{\fa})
 =I_s\otimes \fa_j
=Y_j\otimes 1_{\cA}+
(I_s\otimes \fa_j-Y_j\otimes 1_{\cA})
\bB_j^{\cA}=
\cR_j^{\cA}(I_s\otimes\ul{\fa}).$$

Moreover,
$dom_{sm}(R_j)=
(\KK^{sm\times sm})^d
=DOM_{sm}(\cR_j)$
and for every
$\ul{X}\in dom_{sm}(R_j)$ we have
$R(\ul{X})=
X_j=I_m\otimes Y_j
+(X_j-I_m\otimes Y_j)\bB_j
=\cR_j(\ul{X})$.

 \textbf{\uline{3.\,Addition:}}
Suppose 
$R_1$ 
and 
$R_2$ 
are two nc rational expressions admitting  realizations 
$\cR_1$ 
and 
$\cR_2$ both centred at 
$\ul{Y}$, 
described by the tuples 
$(L_1,D^1,C^1,\ul{\bA}^1,\ul{\bB}^1)$ 
and 
$(L_2,D^2,C^2,\ul{\bA}^2,\ul{\bB}^2)$, 
respectively, and also w.r.t 
any unital
$\KK-$algebra 
$\cA$. 

Thus,
$\ul{\fa}\in dom^{\cA}(R_1+R_2)
=dom^{\cA}(R_1)\cap
dom^{\cA}(R_2)$
implies that
$I_s\otimes \ul{\fa}\in DOM^{\cA}
(\cR_1)\cap DOM^{\cA}(\cR_2)$, 
\begin{multline*}
I_s\otimes 
(R_1+R_2)^{\cA}(\ul{\fa})=
\cR_1^{\cA}(I_s\otimes \ul{\fa})+
\cR_2^{\cA}(I_s\otimes \ul{\fa})
=(\cR_1^{\cA}+\cR_2^{\cA})
(I_s\otimes \ul{\fa})=
D^{\text{par}}\otimes 
1_{\cA}
\\+(C^{\text{par}}
\otimes 1_{\cA})
\Big(
I_{L}\otimes 1_{\cA}-
\sum_{k=1}^d 
\big[\bA_k^{\text{par}}
(I_s)\otimes\fa_k-
\bA_k^{\text{par}}(Y_k)\otimes 1_{\cA}
\big]\Big)^{-1}
\sum_{k=1}^d \big[
\bB_k^{\text{par}}
(I_s)\otimes\fa_k-\\
\bB_k^{\text{par}}(Y_k)
\otimes 1_{\cA}\big]:=
\left(
\cR^{\text{par}}
\right)^{\cA}
(I_s\otimes\ul{\fa})
\end{multline*}
and
$I_s\otimes\ul{\fa}\in 
DOM^{\cA}(\cR^{\text{par}})$,
when 
$\cR^{\text{par}}$
is the nc Fornasini--Marchesini realization centred at
$\ul{Y}$ 
described by
\begin{multline}
\label{eq:20Mar19c}
L=L_1+L_2,\,
D^{\text{par}}=D^1+D^2,\, 
C^{\text{par}}=
\begin{bmatrix}
C^1&C^2\\
\end{bmatrix},\,
\bA_k^{\text{par}}
=\begin{bmatrix}
\bA_k^1&0\\
0&
\bA_k^2\\
\end{bmatrix}\\
\text{ and }
\bB_k^{\text{par}}=
\begin{bmatrix}
\bB_k^1\\
\bB_k^2\\
\end{bmatrix},\,
k=1,\ldots,d
\end{multline}

Also, for every 
$m\in\NN$,
$\ul{X}\in  dom_{sm}(R_1+R_2)= dom_{sm}(R_1)
\cap dom_{sm}(R_2)$
implies
$\ul{X}\in DOM_{sm}(\cR_i)$
and 
$R_i(\ul{X})=\cR_i(\ul{X})$
for 
$i=1,2$
and hence
$(R_1+R_2)(\ul{X})=
\cR_1(\ul{X})+\cR_2(\ul{X})$.
Write 
$\ul{X}=(X_1,\ldots,X_d)
\in(\KK^{sm\times sm})^d$ 
and use 
(\ref{eq:17Feb19a})
to obtain that 
\begin{multline*}
P_1^{-1}\left(\cR_1(\ul{X})
+\cR_2(\ul{X})\right)P_1
=
D^{\text{par}}\otimes I_m+(C^{\text{par}}\otimes I_m)
\Big(I_{Lm}-\sum_{k=1}^d 
\Big[\sum_{i,j=1}^m
\bA_k^{\text{par}}
(X^{(k)}_{ij})
\\ \otimes E_{ij}-
\bA_k^{\text{par}}(Y_k)\otimes I_m
\Big]
\Big)^{-1}
\sum_{k=1}^d
\Big[
\sum_{i,j=1}^m
\bB_k^{\text{par}}(
X^{(k)}_{ij} )
\otimes E_{ij}-
\bB_k^{\text{par}}(Y_k)
\otimes I_m \Big]
=D^{\text{par}}\otimes I_m
\\+(C^{\text{par}}\otimes I_m)P_{2}^{-1}
\Big(
I_{Lm}-
\sum_{k=1}^d
\Big[ \sum_{i,j=1}^m
E_{ij}\otimes
\bA_k^{\text{par}}(X^{(k)}_{ij})
-I_m\otimes 
\bA_k^{\text{par}}(Y_k)
\Big]\Big)^{-1}P_{2}
P_{2}^{-1}
\Big(
\sum_{k=1}^d\\
\Big[\sum_{i,j=1}^m
E_{ij}\otimes 
\bB_k^{\text{par}}
(X^{(k)}_{ij})-I_m\otimes
\bB_k^{\text{par}}(Y_k)\Big]
\Big)P_1
=P_1^{-1}\cR^{\text{par}}(\ul{X})P_1,
\end{multline*}
i.e., 
$\cR_1(\ul{X})+\cR_2(\ul{X})
=\cR^{\text{par}}(\ul{X})$ 
while it is easily seen that 
$\ul{X}\in DOM_{sm}\left(
\cR^{\text{par}}\right)$.

\textbf{\uline{4.\,Multiplication:}}
Suppose 
$R_1$ 
and 
$R_2$ 
are two nc rational expressions admitting  realizations 
$\cR_1$ 
and 
$\cR_2$ 
both centred at 
$\ul{Y}$, 
described by the tuples 
$(L_1,D^1,C^1,\ul{\bA}^1,\ul{\bB}^1)$ 
and 
$(L_2,D^2,C^2,\ul{\bA}^2,\ul{\bB}^2)$, 
respectively, and also w.r.t any unital
$\KK-$algebra
$\cA$.

Thus, 
$\ul{\fa}\in dom^{\cA}(R_1R_2)
=dom^{\cA}(R_1)\cap
dom^{\cA}(R_2)$
implies that
$I_s\otimes 
\ul{\fa}\in DOM^{\cA}
(\cR_1)\cap DOM^{\cA}(\cR_2)$, 
\begin{multline*}
I_s\otimes 
(R_1R_2)^{\cA}(\ul{\fa})=
\cR_1^{\cA}(I_s\otimes \ul{\fa})
\cR_2^{\cA}(I_s\otimes \ul{\fa})
=(\cR_1\cR_2)^{\cA}(I_s\otimes\ul{\fa})
=D^{\text{ser}}\otimes 1_{\cA}
+(C^{\text{ser}}\otimes 1_{\cA})\\
\Big(
I_{L}\otimes 
1_{\cA}-\sum_{k=1}^d 
\big[
\bA_k^{\text{ser}}(I_s)
\otimes\fa_k
-
\bA_k^{\text{ser}}(Y_k)
\otimes 1_{\cA}\big]
\Big)^{-1}
\sum_{k=1}^d 
\big[\bB_k^{\text{ser}}
(I_s)\otimes\fa_k-
\bB_k^{\text{ser}}(Y_k)
\otimes 1_{\cA}
\big]
\\:=\left(\cR^{\text{ser}}
\right)^{\cA}(I_s\otimes\ul{\fa})
\end{multline*}
and
$I_s\otimes\ul{\fa}\in 
DOM^{\cA}(\cR^{\text{ser}})$, 
when 
$\cR^{\text{ser}}$
is the nc Fornasini--Marchesini realization centred at
$\ul{Y}$ described by
\begin{multline}
\label{eq:20Mar19d}
L=L_1+L_2,\,
D^{\text{ser}}=D^1D^2, \,
C^{\text{ser}}=
\begin{bmatrix}
C^1&D^1C^2\\
\end{bmatrix},
\bA_k^{\text{ser}}
=\begin{bmatrix}
\bA_k^1&\bB_k^1\cdot C^2\\
0&
\bA_k^2\\
\end{bmatrix}\\
\text{ and }
\bB_k^{\text{ser}}=
\begin{bmatrix}
\bB_k^1\cdot D^2\\
\bB_k^2\\
\end{bmatrix},\,
k=1,\ldots,d.
\end{multline}

Also,
for every 
$m\in\NN$,
$\ul{X}\in dom_{sm}(R_1R_2)
=dom_{sm}(R_1)
\cap dom_{sm}(R_2)$
implies that 
$\ul{X}\in DOM_{sm}(\cR_i)$
and 
$R_i(\ul{X})=\cR_i(\ul{X})$
for 
$i=1,2$
and
$(R_1R_2)(\ul{X})=\cR_1(\ul{X})
\cR_2(\ul{X}).$
Now, let 
$\ul{X}=(X_1,\ldots,X_d)\in(\KK^{sm\times sm})^d$ 
as in 
(\ref{eq:15Oct18a}), so similar computation shows that 
$\ul{X}\in DOM_{sm}(\cR)$ and
$(R_1R_2)(\ul{X})=\cR_1(\ul{X})\cR_2(\ul{X})=
\cR^{\text{ser}}(\ul{X})$.

\textbf{\uline{5.\,Inverses:}}
Suppose 
$R$ 
is a nc rational expression admitting a realization 
$\cR$ 
centred at 
$\ul{Y}$,
described by the tuple 
$(L,D,C,\ul{\bA},\ul{\bB})$,
also w.r.t
any
unital $\KK-$algebra $\cA$
and 
$R(\ul{Y})=D$ 
is invertible.

Thus,
$\ul{\fa}\in dom^{\cA}(R^{-1})$
implies that
$\ul{\fa}\in dom^{\cA}(R),\,
I_s\otimes\ul{\fa}\in DOM^{\cA}(\cR),
\,R^{\cA}(\ul{\fa})$ 
is invertible and 
$\cR^{\cA}(I_s\otimes\ul{\fa})=I_s\otimes 
R^{\cA}(\ul{\fa})$,
so 
\begin{equation*}
I_s\otimes 
(R^{-1})^{\cA}(\ul{\fa})=
(I_s\otimes 
R^{\cA}(\ul{\fa}))^{-1}=
(\cR^{\cA}(I_s\otimes\ul{\fa}))^{-1}
=(\cR^{\text{inv}})^{\cA}
(I_s\otimes\ul{\fa})
\end{equation*}
and
$I_s\otimes \ul{\fa}\in DOM^{\cA}(\cR^{\text{inv}})$,
when 
$\cR^{\text{inv}}$ 
is the nc Fornasini--Marchesini realization centred at
$\ul{Y}$ described by
\begin{multline}
\label{eq:20Mar19e}
D^{\text{inv}}=D^{-1},\, 
C^{\text{inv}}=D^{-1}C,\,
\bA_k^{\text{inv}}=\bA_k
-\bB_k\cdot(D^{-1}C)
\text{ and }\\
\bB_k^{\text{inv}}
=-\bB_k\cdot D^{-1},\,
k=1,\ldots,d.
\end{multline}

Moreover, if 
$\ul{X}\in dom_{sm}(R^{-1})$,
then
$\ul{X}\in dom_{sm}(R)$ and 
$R(\ul{X})$
is invertible, so 
$\ul{X}\in DOM_{sm}(\cR)$ 
and 
$R(\ul{X})=\cR(\ul{X})$ 
is invertible, therefore the matrices  
$M$
and 
$I_m\otimes 
D+(I_m\otimes C)M^{-1}N$
are invertible,
where 
$$M:=I_{Lm}-\sum_{k=1}^d 
(X_k-I_m\otimes Y_k)\bA_k
\text{ and }
N:=\sum_{k=1}^d 
(X_k-I_m\otimes Y_k)\bB_k.$$
Consider the matrix
\begin{align*}
E:=\begin{bmatrix}
-M&N\\I_m\otimes C&I_m\otimes D\\
\end{bmatrix}\in\KK^{(L+s)m\times (L+s)m}
\end{align*}
together with its two Schur complements decompositions 
\begin{multline*}
E=\begin{bmatrix}
I_{Lm}&0\\
-(I_m\otimes C)M^{-1}&I_{sm}\\
\end{bmatrix}\begin{bmatrix}
-M&0\\0&I_m\otimes D+
(I_m\otimes C)M^{-1}N\\
\end{bmatrix}
\begin{bmatrix}
I_{Lm}&-M^{-1}N\\
0&I_{sm}\\
\end{bmatrix}
\\=
\begin{bmatrix}
I_{Lm}&N(I_m\otimes D)^{-1}\\
0&I_{sm}\\
\end{bmatrix}
\begin{bmatrix}
-M-N(I_m\otimes D^{-1} C)&0\\
0& I_m\otimes D\\
\end{bmatrix}
\begin{bmatrix}
I_{Lm}&0\\
I_m\otimes (D^{-1}C)&I_{sm}\\
\end{bmatrix}.
\end{multline*}
As 
$M$ 
and 
$I_m\otimes D+(I_m\otimes C)M^{-1}N$ 
are invertible, it follows that 
$E$ 
is invertible and hence
\begin{equation*}
M+N(I_m\otimes (D^{-1}C))
=I_{Lm}-\sum_{k=1}^d 
(X_k-I_m\otimes Y_k)\bA_k^{\text{inv}}
\end{equation*}
is invertible, i.e., 
$\ul{X}\in DOM_{sm}
(\cR^{\text{inv}})$. 
Thus
$dom_{sm}(R^{-1})\subseteq 
DOM_{sm}(\cR^{\text{inv}})$ 
and for every 
$\ul{X}\in dom_{sm}(R^{-1})$, 
we have
$R^{-1}(\ul{X})=
\cR(\ul{X})^{-1}=
\cR^{\text{inv}}(\ul{X}).$
\end{proof}
We finish this subsection by comparing the two parts of 
Definition
\ref{def:25Sep18a} 
for the $\KK-$algebra 
$\cA_n=\KK^{\ntn}$
(cf. Remark 
\ref{rem:20Mar19a}). 
This will imply (see Corollary \ref{cor:13Feb19a}) 
that for every nc rational expression $R$, the realization that we have constructed in Theorem
\ref{thm:21Ap17a}--- centred at a 
$d-$tuple of
$\sts$ matrices---
allows us to evaluate $R$ 
at every point in its domain of 
regularity and not only at the points 
whose dimension is a multiple of 
$s$. 
An alternative way to evaluate
a nc rational expression on all of 
its domain of regularity, will be given later in 
Theorem 
\ref{thm:30Jan18c}. 
We define 
$P_1=E(n,s)$ 
and 
$P_2=E(n,L)$, 
correspondingly to 
$(\ref{eq:24Jan19a})$.
\begin{proposition}
\label{prop:18Oct18a}
Let 
$n\ge1,\,\cA_n=\KK^{\ntn},\,
\cR$
be a nc Fornasini--Marchesini 
realization centred at
$\ul{Y}\in(\KK^{\sts})^d$ 
and 
$\ul{X}\in(\KK^{sn\times sn})^d$. 
Then
\begin{align}
\label{eq:13Feb19b}
\ul{X}\in DOM^{\cA_n}(\cR)
\iff 
P_1\cdot\ul{X}\cdot P_1^{-1}\in DOM_{sn}(\cR)
\end{align}
and
$\cR^{\cA_n}(\ul{X})=
P_1^{-1}\cR(P_1\cdot\ul{X}
\cdot P_1^{-1}) P_1$, 
whenever 
$(\ref{eq:13Feb19b})$
holds.
\end{proposition}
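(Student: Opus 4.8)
The plan is to reduce the statement about $\cA_n$-evaluations to a plain matrix statement via the shuffle (permutation) matrices $P_1 = E(n,s)$ and $P_2 = E(n,L)$, exactly in the spirit of the computation \eqref{eq:17Feb19a}. First I would unwind both sides. An element $\ul X = (X_1,\ldots,X_d) \in (\cA_n^{\sts})^d = (\KK^{\ntn})^{\sts\,\times\,d}$ is the same datum as a $d$-tuple of $sn \times sn$ matrices, and the key identity I need is that the two natural ways of forming the "mixed" block matrices agree up to conjugation by a shuffle: writing $X_k = \sum_{i,j=1}^{s} E_{ij} \otimes X^{(k)}_{ij}$ with $E_{ij} \in \KK^{\sts}$ and $X^{(k)}_{ij} \in \cA_n = \KK^{\ntn}$, one has on the one hand $(X_k)\bA_k^{\cA_n} = \sum_{i,j} \bA_k(E_{ij}) \otimes X^{(k)}_{ij} \in \cA_n^{L\times L}$, and on the other hand $P_1 \cdot X_k \cdot P_1^{-1} = \sum_{i,j} X^{(k)}_{ij} \otimes E_{ij} \in \KK^{sn\times sn}$, so that $\bA_k$ applied blockwise to $P_1 X_k P_1^{-1}$ (in the $(X)\bA_k$ sense of Section~\ref{sec:real}) equals $P_2 \bigl((X_k)\bA_k^{\cA_n}\bigr) P_2^{-1}$ after the appropriate shuffle. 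This is the same bookkeeping already carried out to pass between $(X_k - I_m\otimes Y_k)\bA_k$ and $\sum_{i,j}\bA_k(X^{(k)}_{ij})\otimes E_{ij}$ in the discussion preceding the proof of Theorem~\ref{thm:21Ap17a}; here $m$ is replaced by $s$ and the scalar entries $X^{(k)}_{ij}$ are replaced by $n\times n$ matrices, which changes nothing in the algebra.

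Second, I would handle the $Y_k$ term. Since $Y_k \in \KK^{\sts}$, we have $Y_k \otimes 1_{\cA_n} = Y_k \otimes I_n \in \cA_n^{\sts}$, and $P_1 (Y_k \otimes I_n) P_1^{-1} = I_n \otimes Y_k$ by \eqref{eq:24Jan19a}; moreover $(Y_k\otimes 1_{\cA_n})\bA_k^{\cA_n} = \bA_k(Y_k)\otimes 1_{\cA_n}$ directly from the definition of $\bA_k^{\cA_n}$. Combining this with the previous paragraph, the operator
\[
I_L \otimes 1_{\cA_n} - \sum_{k=1}^d (\fA_k - Y_k\otimes 1_{\cA_n})\bA_k^{\cA_n}
\]
that defines $DOM^{\cA_n}(\cR)$ is, after conjugation by a fixed permutation matrix (built from $P_2$), exactly the matrix
\[
I_{Ln} - \sum_{k=1}^d (X_k - I_n\otimes Y_k)\bA_k
\]
that defines $DOM_{sn}(\cR)$ with $\ul X$ replaced by $P_1 \cdot \ul X \cdot P_1^{-1}$. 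Since conjugation by an invertible matrix preserves invertibility and determinant, \eqref{eq:13Feb19b} follows immediately.

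Third, for the evaluation formula itself, I would simply transport the whole expression for $\cR^{\cA_n}(\ul X)$ through the conjugations already identified: $D \otimes 1_{\cA_n} = D \otimes I_n$ becomes $P_1^{-1}(D\otimes I_n \text{ shuffled})P_1 = $ the $D$-term of $\cR(P_1 \cdot \ul X\cdot P_1^{-1})$; the resolvent factor $\bigl(I_L\otimes 1_{\cA_n} - \sum_k[(\fA_k)\bA_k^{\cA_n} - \bA_k(Y_k)\otimes 1_{\cA_n}]\bigr)^{-1}$ becomes, via the $P_2$-conjugation, the corresponding resolvent for $\cR$; and the input factor $\sum_k[(\fA_k)\bB_k^{\cA_n} - \bB_k(Y_k)\otimes 1_{\cA_n}]$ becomes the corresponding $N$-type factor, now carrying a $P_1^{-1}$ on the right because $\bB_k$ lands in $\KK^{L\times s}$ (an $L\times s$ shape, hence $P_2$ on the left and $P_1$ on the right), matching precisely the shuffle pattern $P_2(\cdots)P_1^{-1}$ that already appears in \eqref{eq:17Feb19a}. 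Assembling the three factors and cancelling the internal $P_2 P_2^{-1}$ yields $\cR^{\cA_n}(\ul X) = P_1^{-1}\cR(P_1\cdot\ul X\cdot P_1^{-1})P_1$.

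The only real point requiring care — and the step I expect to be the main obstacle — is getting the shuffle matrices and their transposes on exactly the right sides: one must distinguish the $L\times L$ shapes (conjugation by $P_2$, i.e.\ $P_2(\cdot)P_2^{-1}$) from the $L\times s$ shapes (left-multiply by $P_2$, right-multiply by $P_1^{-1}$) and the $s\times L$ shape of $C$ (the mirror image), and verify that the shuffle identity \eqref{eq:24Jan19a} is being applied with the correct dimension arguments $(n,s)$, $(n,L)$ in each slot. Since $n$ (equivalently $P_1$, $P_2$) is fixed throughout, none of the constituent maps depends on the size of $\ul X$, so once the placement is pinned down the identity is a single application of \eqref{eq:24Jan19a} in each term, and the equivalence \eqref{eq:13Feb19b} together with the evaluation formula drop out simultaneously; I would present the argument exactly parallel to the block computation already displayed before the proof of Theorem~\ref{thm:21Ap17a}, changing only $m \rightsquigarrow s$ and scalars $\rightsquigarrow$ $\cA_n$-entries.
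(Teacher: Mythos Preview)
Your proposal is correct and follows essentially the same approach as the paper's own proof: decompose $X_k=\sum_{i,j=1}^{s} E_{ij}\otimes X^{(k)}_{ij}$ with $E_{ij}\in\KK^{\sts}$ and $X^{(k)}_{ij}\in\cA_n$, apply the shuffle identity \eqref{eq:24Jan19a} to convert each term $\bA_k(E_{ij})\otimes X^{(k)}_{ij}$ (resp.\ $\bB_k(E_{ij})\otimes X^{(k)}_{ij}$) into the corresponding block of $(P_1X_kP_1^{-1})\bA_k$ (resp.\ $(P_1X_kP_1^{-1})\bB_k$) via conjugation by $P_2$ (resp.\ $P_2$ on the left and $P_1$ on the right), and then assemble. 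The only minor imprecision is your remark ``$m\rightsquigarrow s$ and scalars $\rightsquigarrow\cA_n$-entries'': in \eqref{eq:15Oct18a} the blocks $X^{(k)}_{ij}$ are $\sts$ matrices, not scalars, so the actual substitution is $m\rightsquigarrow n$ and $\KK^{\sts}$-blocks $\rightsquigarrow\KK^{\sts}$-indexing with $\cA_n$-blocks---but this does not affect the argument.
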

\begin{proof}
Let 
$\ul{X}=(X_1,\ldots,X_d)
\in(\KK^{sn\times sn})^d$
and consider the decomposition 
(\ref{eq:15Oct18a}), 
where
$X^{(k)}_{ij}\in
\KK^{\ntn}=\cA_n$ 
for 
$1\le i,j\le s$ 
and 
$1\le k\le d$.
As
\begin{multline*}
I_L\otimes 1_{\cA}
-\sum_{k=1}^d
(X_k-Y_k\otimes 1_{\cA})\bA_k^{\cA_n}
=I_{Ln}-\sum_{k=1}^d
\Big[
\sum_{i,j=1}^{s}\bA_k(E_{ij})
\otimes X^{(k)}_{ij}
-\bA_k(Y_k)\otimes I_n
\Big]
=\\P_{2}^{-1}
\Big(
I_{Ln}-\sum_{k=1}^d
\Big[
\sum_{i,j=1}^{s}
X^{(k)}_{ij}
\otimes\bA_k(E_{ij})
-I_n\otimes \bA_k(Y_k)\Big]
\Big)
P_{2}
=P_{2}^{-1}\Big(
I_n\otimes I_{L}-\sum_{k=1}^d 
\Big[
\sum_{i,j=1}^{s}\\
(X^{(k)}_{ij}
\otimes E_{ij})\bA_k-
(I_n\otimes Y_k)\bA_k\Big]\Big)P_{2}
=P_{2}^{-1}\Big(
I_n\otimes I_{L}-\sum_{k=1}^d 
(P_1X_kP_1^{-1}-
I_n\otimes Y_k)\bA_k
\Big)P_{2},
\end{multline*}
we have
$\ul{X}\in DOM^{\cA_n}(\cR)$
if and only if
$P_1\cdot\ul{X}\cdot 
P_1^{-1}\in DOM_{sn}(\cR)$.

Similar computation shows that
\begin{multline*}
(X_k)\bB_k^{\cA_n}-\bB_k(Y_k)\otimes I_n=
\sum_{i,j=1}^{s}
\bB_k(E_{ij})
\otimes X^{(k)}_{ij}-\bB_k(Y_k)\otimes I_n
=P_{2}^{-1}
\Big( 
\sum_{i,j=1}^{s}
X^{(k)}_{ij}\otimes
\bB_k(E_{ij})
\\-I_n\otimes \bB_k(Y_k)
\Big)
P_1
=P_{2}^{-1}(P_1X_kP_1^{-1}
-I_n\otimes Y_k)\bB_k
P_1
\end{multline*}
and hence
$\ul{X}\in DOM^{\cA_n}(\cR)$ implies 
\begin{multline*}
\cR^{\cA_n}(\ul{X})=
D\otimes I_n
+(C\otimes I_n)
\Big(I_{Ln}-\sum_{k=1}^d
\big[
(X_k)\bA_k^{\cA_n}-\bA_k(Y_k)\otimes I_n
\big]
\Big)^{-1}
\sum_{k=1}^d
\big[
(X_k)\bB_k^{\cA_n}-
\\\bB_k(Y_k)\otimes I_n
\big]
=D\otimes I_n+(C\otimes I_n)P_{2}^{-1}
\Big(I_n\otimes I_{L}-\sum_{k=1}^d 
(P_1X_kP_1^{-1}-
I_n\otimes Y_k)\bA_k
\Big)^{-1}
\sum_{k=1}^d 
(P_1X_kP_1^{-1}\\-
I_n\otimes Y_k)
\bB_k
P_1=P_1^{-1}
\cR(P_1\cdot \ul{X}\cdot P_1^{-1}) P_1.
\end{multline*}
\end{proof}
\begin{cor}
\label{cor:13Feb19a}
If $n\in\NN$ and
$\cR$
is a nc Fornasini--Marchesini 
realization of a nc rational expression
$R$
w.r.t 
$\cA_n=\KK^{\ntn}$, 
then for every
$\ul{X}\in dom_n(R)$ 
we have 
\begin{equation*}
\ul{X}\otimes 
I_s\in DOM_{sn}(\cR) 
\text{ and }  
R(\ul{X})
\otimes I_s=\cR(\ul{X}\otimes I_s).
\end{equation*}
\end{cor}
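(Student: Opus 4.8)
The statement is a direct specialization of the ``evaluation w.r.t.\ an algebra'' machinery to $\cA_n=\KK^{\ntn}$, and the plan is to chain together Definition~\ref{def:25Sep18a}(2), Remark~\ref{rem:20Mar19a} and Proposition~\ref{prop:18Oct18a}. Fix $\ul{X}=(X_1,\ldots,X_d)\in dom_n(R)$. By Remark~\ref{rem:20Mar19a} we have $dom^{\cA_n}(R)=dom_n(R)$ and $R^{\cA_n}(\ul{X})=R(\ul{X})$; in particular $\ul{X}\in dom^{\cA_n}(R)$. Since, by hypothesis, $\cR$ is a realization of $R$ w.r.t.\ $\cA_n$, the condition in Definition~\ref{def:25Sep18a}(2) applies to $\ul{\fa}=\ul{X}$ and yields at once that $I_s\otimes\ul{X}:=(I_s\otimes X_1,\ldots,I_s\otimes X_d)\in DOM^{\cA_n}(\cR)$ and $\cR^{\cA_n}(I_s\otimes\ul{X})=I_s\otimes R^{\cA_n}(\ul{X})=I_s\otimes R(\ul{X})$.

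Next I would feed the $d$-tuple $I_s\otimes\ul{X}\in(\KK^{sn\times sn})^d$ into Proposition~\ref{prop:18Oct18a}. The membership $I_s\otimes\ul{X}\in DOM^{\cA_n}(\cR)$ just obtained turns, via the equivalence~\eqref{eq:13Feb19b}, into $P_1\cdot(I_s\otimes\ul{X})\cdot P_1^{-1}\in DOM_{sn}(\cR)$, and the proposition also supplies $\cR^{\cA_n}(I_s\otimes\ul{X})=P_1^{-1}\,\cR\bigl(P_1\cdot(I_s\otimes\ul{X})\cdot P_1^{-1}\bigr)\,P_1$. It then remains to recognize the conjugation by $P_1$ concretely. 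For a single matrix $A\in\KK^{\ntn}$, the matrix $I_s\otimes A$ is the $\sts$ block matrix with $\ntn$ blocks having $A$ on the diagonal and $0$ off it; conjugating by $P_1$ is precisely the block-transposition carried out inside the proof of Proposition~\ref{prop:18Oct18a} (the one that rewrites a tuple presented as in~\eqref{eq:15Oct18a} with its inner and outer Kronecker factors interchanged), and it sends this block-diagonal matrix to the $\ntn$ block matrix whose $(p,q)$ block is $A_{pq}I_s$, i.e.\ to $A\otimes I_s$. Applying this to each coordinate gives $P_1\cdot(I_s\otimes\ul{X})\cdot P_1^{-1}=\ul{X}\otimes I_s$, and applying it to $R(\ul{X})=R^{\cA_n}(\ul{X})\in\KK^{\ntn}$ gives $P_1\bigl(I_s\otimes R(\ul{X})\bigr)P_1^{-1}=R(\ul{X})\otimes I_s$.

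Substituting these two identities back then gives, on the one hand, $\ul{X}\otimes I_s\in DOM_{sn}(\cR)$, and, on the other hand,
\begin{align*}
\cR(\ul{X}\otimes I_s)&=\cR\bigl(P_1\cdot(I_s\otimes\ul{X})\cdot P_1^{-1}\bigr)=P_1\,\cR^{\cA_n}(I_s\otimes\ul{X})\,P_1^{-1}\\
&=P_1\bigl(I_s\otimes R(\ul{X})\bigr)P_1^{-1}=R(\ul{X})\otimes I_s,
\end{align*}
which is exactly the claim. The only step requiring genuine care — and the one I would write out in full — is the identity $P_1\cdot(I_s\otimes A)\cdot P_1^{-1}=A\otimes I_s$: one has to line up the convention $P_1=E(n,s)$ fixed just before Proposition~\ref{prop:18Oct18a} (and the shuffle rule~\eqref{eq:24Jan19a}) with the block decomposition~\eqref{eq:15Oct18a} so that the factors get swapped in the direction claimed rather than the opposite one. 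In fact the explicit form of $P_1$ is not even needed for this: it is enough that conjugation by $P_1$ implements the very block-transpose appearing in Proposition~\ref{prop:18Oct18a}, which sends the block-diagonal matrix $I_s\otimes A$ to $A\otimes I_s$. Everything else is substitution, so I do not expect any further obstacle.
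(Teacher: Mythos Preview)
Your proof is correct and follows essentially the same route as the paper's own proof: both invoke Remark~\ref{rem:20Mar19a} to place $\ul{X}$ in $dom^{\cA_n}(R)$, apply Definition~\ref{def:25Sep18a}(2) to get $I_s\otimes\ul{X}\in DOM^{\cA_n}(\cR)$ and $\cR^{\cA_n}(I_s\otimes\ul{X})=I_s\otimes R(\ul{X})$, and then feed this into Proposition~\ref{prop:18Oct18a} together with the shuffle identity $P_1\cdot(I_s\otimes\ul{X})\cdot P_1^{-1}=\ul{X}\otimes I_s$ to conclude. Your extra care in isolating and justifying that last identity is well placed, but otherwise there is nothing to add.
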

\begin{proof}
Let
$\ul{X}\in 
dom_n(R)$, 
thus 
$\ul{X}\in dom^{\cA_n}(R)$ 
and that implies by Definition
\ref{def:25Sep18a}
that 
$I_s\otimes \ul{X}\in 
DOM^{\cA_n}(\cR)$ 
and 
$\cR^{\cA_n}(I_s\otimes \ul{X})
=I_s\otimes 
R^{\cA_n}(\ul{X})$. 
Using Proposition
\ref{prop:18Oct18a}, 
we get
\begin{equation*}
\ul{X}\otimes I_s=
P_1\cdot (I_s\otimes \ul{X})
\cdot P_1^{-1}\in DOM_{sn}(\cR)
\end{equation*}
and 
\begin{equation*}
P_1^{-1}\cR(\ul{X}\otimes I_s)P_1=
\cR^{\cA_n}(I_s\otimes \ul{X})=
I_s\otimes 
R(\ul{X})
\end{equation*}
 which implies that 
 $\cR(\ul{X}\otimes I_s)=
R(\ul{X})\otimes I_s$.
\end{proof}
\subsection{Controllability and observability}
\label{subsec:ContObs}
To consider the notion of minimal realizations as 
in the classical realization theory, we first 
introduce the definitions of controllability and 
observability in the case of nc 
Fornasini--Marchesini realizations centred 
at a matrix point, 
which are generalizations of the definitions
in the case of nc Fornasini--Marchesini realizations centred at a scalar point. For a reference
of the later ones see 
\cite{BGM1}.

Given the linear mappings 
$\bA_1,\ldots,\bA_d:\KK^{\sts}
\rightarrow\KK^{L\times L},\,
\bB_k:\KK^{\sts}\rightarrow
\KK^{L\times s}$
and a word
$\omega=g_{i_1}\dots 
g_{i_\ell}\in\cG_d$ 
of length 
$|\omega|=\ell$, 
define the 
multilinear mapping 
$\ul{\bA}^\omega:(\KK^{\sts})^{\ell}
\rightarrow\KK^{L\times L}$
by
\begin{equation*}
\ul{\bA}^\omega
\left(X_1,\ldots,X_{\ell}\right)
:=\bA_{i_1}(X_1)\cdots
\bA_{i_\ell}(X_{\ell})
\end{equation*}
and the multilinear mapping 
$\ul{\bA}^{\omega}\cdot\bB_k:
(\KK^{\sts})^{\ell+1}\rightarrow
\KK^{L\times s}$ 
by
\begin{equation*}
(\ul{\bA}^{\omega}\cdot\bB_k)
(X_1,\ldots,X_{\ell+1}):=
\ul{\bA}^{\omega}(X_1,\ldots,X_{\ell})
\bB_k(X_{\ell+1}).
\end{equation*}
\begin{definition}
\label{def:25Jan19a}
Let
$\bA_1,\ldots,\bA_d:\KK^{\sts}
\rightarrow\KK^{L\times L}$
and
$\bB_1,\ldots,\bB_d:
\KK^{\sts}\rightarrow\KK^{L\times s}$ 
be linear mappings, and 
$C\in\KK^{s\times L}$. 
\begin{itemize}
\item[1.]
The \textbf{controllable subspace}
$\cC_{\ul{\bA},\ul{\bB}}$ 
is defined by 
$$\bigvee_{\omega\in\cG_d,\,
X_1,\ldots,X_{|\omega|+1}
\in\KK^{s\times s},\,
1\le k\le d} 
\Ima
\left(\ul{\bA}^\omega
(X_1,\ldots,
X_{|\omega|})
\bB_k
(X_{|\omega|+1})
\right).$$
If 
$\cC_{\ul{\bA},\ul{\bB}}=\KK^L$,
then the tuple 
$(\ul{\bA},\ul{\bB})$ 
is called \textbf{controllable}.
\item[2.]
The \textbf{un-observable subspace}
$\obs{C}{\ul{\bA}}$ 
is defined by
$$\bigcap_{\omega\in\cG_d,\,
X_1,\ldots,X_{|\omega|}
\in\KK^{s\times s}}
\ker \left(C\ul{\bA}^\omega
\left(X_1,\ldots,
X_{|\omega|}\right)\right).$$
If 
$\obs{C}{\ul{\bA}}=\{\ul{0}\}$,
the tuple
$(C,\ul{\bA})$ 
is called \textbf{observable}.
\end{itemize}
\end{definition}
The multilinear mapping 
$\ul{\bA}^\omega$ 
can be viewed as a linear mapping from
$(\KK^{\sts})^{\ell}$ 
to 
$\KK^{L\times L}$. 
Then one can use the faux product, as introduced in 
(\ref{eq:21Mar19a}), 
to define controllability and observability 
not only on the level of 
$\sts$ 
matrices, but also on the levels of 
$sm\times sm$ 
matrices, for every 
$m\in\NN$,  
using the subspaces  
$$\cC_{\ul{\bA},\ul{\bB}}^{(m)}=
\bigvee_{
\omega\in\cG_d,\,
X_1,\ldots,X_{|\omega|+1}
\in\KK^{sm\times sm},\,1\le k\le d} 
\Ima
\big((X_1\odot_s\cdots
\odot_sX_{|\omega|})
\ul{\bA}^\omega(
X_{|\omega|+1})
\bB_{k}
\big)$$
and
$$\obs{C}{\ul{\bA}}^{(m)}
=\bigcap_{\omega\in\cG_d,\,
X_1,\ldots,X_{|\omega|}\in\KK^{sm\times sm}}
\ker 
\big((I_m\otimes C)
(X_1\odot_s\cdots\odot_sX_{|\omega|})
\ul{\bA}^\omega\big).$$
\begin{proposition}
\label{prop:28Sep18b}
If 
$m\in\NN$, 
then
$\cC_{\ul{\bA},\ul{\bB}}^{(m)}
=\cC_{\ul{\bA},\ul{\bB}}^{(1)}
\otimes \KK^m$
and
$\obs{C}{\ul{\bA}}^{(m)}=
\obs{C}{\ul{\bA}}^{(1)}\otimes
\KK^m.$
\end{proposition}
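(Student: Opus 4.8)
The plan is to reduce both equalities at level $m$ to the level-$1$ subspaces by unwinding the faux product. Throughout, abbreviate $\cC^{(m)}:=\cC_{\ul{\bA},\ul{\bB}}^{(m)}$ and $\cO^{(m)}:=\obs{C}{\ul{\bA}}^{(m)}$. The first step is the identity
\[
(X_1\odot_s\cdots\odot_s X_{\ell})\,\ul{\bA}^{\omega}
=\big[(X_1)\bA_{i_1}\big]\big[(X_2)\bA_{i_2}\big]\cdots\big[(X_\ell)\bA_{i_\ell}\big]
\qquad\big(\omega=g_{i_1}\cdots g_{i_\ell}\in\cG_d\big),
\]
valid for all $X_1,\dots,X_\ell\in\KK^{sm\times sm}$, where on the right each $(X_t)\bA_{i_t}\in\KK^{Lm\times Lm}$ is the blockwise extension of $\bA_{i_t}$ and on the left $\ul{\bA}^\omega$ is applied entrywise to the faux product. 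This follows by induction on $\ell$ from the combinatorial description $(X_1\odot_s\cdots\odot_s X_\ell)_{ij}=\sum_{k_1,\dots,k_{\ell-1}}(X_1)_{ik_1}\otimes\cdots\otimes(X_\ell)_{k_{\ell-1}j}$ together with the fact that $\ul{\bA}^\omega$ is the linearization of a multilinear map; the empty word is the identity. Consequently $\cC^{(m)}$ is the span of the images of $[(X_1)\bA_{i_1}]\cdots[(X_\ell)\bA_{i_\ell}]\,[(X_{\ell+1})\bB_k]$, and $\cO^{(m)}$ is the intersection of the kernels of $(I_m\otimes C)\,[(X_1)\bA_{i_1}]\cdots[(X_\ell)\bA_{i_\ell}]$, over all $\omega\in\cG_d$, all $X_t\in\KK^{sm\times sm}$ and all $k$. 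Here $\KK^{Lm}$ is read as $m$-fold column blocks of $\KK^L$, and "$V\otimes\KK^m$" (for $V=\cC^{(1)}$ or $\cO^{(1)}$) denotes the block vectors all of whose blocks lie in $V$, consistently with the blockwise conventions of this section.

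For the controllable subspace I would prove the two inclusions. For $\cC^{(1)}\otimes\KK^m\subseteq\cC^{(m)}$, fix $p$ and substitute $X_t=E_{pp}\otimes Y_t$ with $Y_t\in\KK^{\sts}$; using linearity of the $\bA_{i_t},\bB_k$ and $E_{pp}^2=E_{pp}$ one obtains $[(X_1)\bA_{i_1}]\cdots[(X_{\ell+1})\bB_k]=E_{pp}\otimes\big(\bA_{i_1}(Y_1)\cdots\bA_{i_\ell}(Y_\ell)\bB_k(Y_{\ell+1})\big)$, whose image is $\KK\ul{e}_p\otimes\Ima(\bA_{i_1}(Y_1)\cdots\bB_k(Y_{\ell+1}))$; taking the span over $\omega,Y_\bullet,k$ gives $\KK\ul{e}_p\otimes\cC^{(1)}$, and then over $p$ the full inclusion. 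For the reverse inclusion, each block of $[(X_1)\bA_{i_1}]\cdots[(X_{\ell+1})\bB_k]$ is, by the block formula above, a sum of matrices $\ul{\bA}^\omega(Y_1,\dots,Y_\ell)\bB_k(Y_{\ell+1})$ with the $Y_\bullet\in\KK^{\sts}$ the blocks of the $X_t$; their columns lie in $\cC^{(1)}$ by definition, so every column of the $Lm\times sm$ product lies in $\cC^{(1)}\otimes\KK^m$, whence $\cC^{(m)}\subseteq\cC^{(1)}\otimes\KK^m$.

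The un-observable subspace is handled dually. Since $I_m\otimes C$ is block diagonal, the $i$-th block of $(I_m\otimes C)[(X_1)\bA_{i_1}]\cdots[(X_\ell)\bA_{i_\ell}]\,v$ (for $v=(v_1,\dots,v_m)$ with $v_q\in\KK^L$) is a sum of terms $C\,\ul{\bA}^\omega(Y_1,\dots,Y_\ell)\,v_q$ with the $Y_\bullet$ blocks of the $X_t$. If every $v_q\in\cO^{(1)}$, each such term vanishes for all $\omega$ and all $Y_\bullet$, so $v\in\cO^{(m)}$; this gives $\cO^{(1)}\otimes\KK^m\subseteq\cO^{(m)}$. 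Conversely, take $v\in\cO^{(m)}$, a word $\omega=g_{i_1}\cdots g_{i_\ell}$, matrices $Y_1,\dots,Y_\ell\in\KK^{\sts}$ and an index $q_0$, and put $X_1=\cdots=X_{\ell-1}=E_{11}\otimes Y_\bullet$, $X_\ell=E_{1q_0}\otimes Y_\ell$ (for $\ell=1$ take $X_1=E_{1q_0}\otimes Y_1$; the empty word is read off $\ker(I_m\otimes C)$ directly). Then $(I_m\otimes C)[(X_1)\bA_{i_1}]\cdots[(X_\ell)\bA_{i_\ell}]=E_{1q_0}\otimes\big(C\,\ul{\bA}^\omega(Y_1,\dots,Y_\ell)\big)$ has a single nonzero block, so membership of $v$ in its kernel forces $C\,\ul{\bA}^\omega(Y_1,\dots,Y_\ell)\,v_{q_0}=0$; letting $\omega,Y_\bullet,q_0$ vary shows $v_{q_0}\in\cO^{(1)}$ for every $q_0$, i.e. $\cO^{(m)}\subseteq\cO^{(1)}\otimes\KK^m$.

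I do not anticipate a genuine obstacle; the only delicate point is bookkeeping with the conventions — the identification $\KK^{Lm}\cong\KK^m\otimes\KK^L$ implicit in the blockwise extensions, the entrywise meaning of $(\cdot)\ul{\bA}^\omega$ applied to a faux product, and the reading of "$\cC^{(1)}\otimes\KK^m$" as $m$-fold blocks. Once the identity of the first step is in place, both equalities reduce to choosing the rank-one block patterns $E_{pq}\otimes(\cdot)$ that isolate the level-$1$ generators, and to observing that no level-$m$ choice of the $X_t$ escapes the level-$1$ span.
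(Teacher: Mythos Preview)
Your proposal is correct and follows essentially the same approach as the paper: both arguments reduce to the blockwise action of the extended operators and isolate individual blocks by plugging in rank-one patterns $E_{pq}\otimes Y$ (the paper uses $E_{jj}\otimes X$ and $I_m\otimes X$ in place of your $E_{pp}\otimes Y$ and $E_{1q_0}\otimes Y$, but the mechanism is identical). Your explicit identity $(X_1\odot_s\cdots\odot_s X_\ell)\ul{\bA}^\omega=\prod_t[(X_t)\bA_{i_t}]$ is exactly what the paper uses implicitly (cf.\ Lemma~\ref{lem:8Nov18a}), and your careful handling of the $V\otimes\KK^m$ convention matches the paper's usage.
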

\begin{proof}
Let 
$m\in\NN$.

$\bullet$ If 
$\ul{u}\in\cC^{(m)}_{\ul{\bA},\ul{\bB}}$,
then 
$\ul{u}$
is a linear combination of vectors of the form
$$(X_1)\bA_{i_1}\cdots(X_k)\bA_{i_k}
(X_{k+1})\bB_{i_{k+1}}\ul{u}_i,$$
where 
$1\le i_1,\ldots,i_{k+1}\le d,\,
X_1,\ldots,X_{k+1}\in
\KK^{sm\times sm}$ 
and 
$\ul{u}_i\in\KK^{sm}$. 
As the mappings
$\bA_i,\bB_i (1\le i\le d)$
act on 
$sm\times sm$ 
matrices by acting on their $\sts$ blocks, 
we get that 
$(X_1)\bA_{i_1}\cdots(X_k)\bA_{i_k}
(X_{k+1})\bB_{i_{k+1}}\ul{u}_i\in
\cC^{(1)}_{\ul{\bA},\ul{\bB}}\otimes\KK^m$
and as a linear combination of such vectors, 
we get that 
$\ul{u}\in\cC^{(1)}_{\ul{\bA},
\ul{\bB}}\otimes\KK^m$.

$\bullet$ 
On the other hand, let 
$\ul{u}\in\cC^{(1)}_{\ul{\bA},
\ul{\bB}}\otimes\KK^m$ 
and write 
$\ul{u}=
\begin{bmatrix}
\ul{u}_1^T&\ldots&\ul{u}_m^T
\end{bmatrix}^T$ 
where 
$\ul{u}_1,\ldots,\ul{u}_m
\in\cC^{(1)}_{\ul{\bA},\ul{\bB}}.$
Thus 
\begin{multline*}
\ul{u}=\sum_{i=1}^m 
\ul{u}_i\otimes \ul{e}_i=
\sum_{i=1}^m
\Big(
\sum_{j=1}^{k_i}
\ul{\bA}^{\omega_{j,i}}\big(
X_{j,1}^{(i)},\ldots,X^{(i)}_{j,|
\omega_{j,i}|}
\big)
\bB_{\ell_{i,j}}
\big(\wt{X}^{(i)}_j
\big)
\ul{w}_{j,i}
\Big)
\otimes \ul{e}_i
=\sum_{i=1}^m
\sum_{j=1}^{k_i} \\
\Big[
I_m\otimes
\Big(
\ul{\bA}^{\omega_{j,i}}
\big(
X_{j,1}^{(i)},\ldots,X^{(i)}_{j,|
\omega_{j,i}|}\big)
\bB_{\ell_{i,j}}
\big(\wt{X}^{(i)}_{j}\big)
\ul{w}_{j,i}
\Big)
\Big] 
\ul{e}_i
=\sum_{i=1}^m\sum_{j=1}^{k_i}
\big( (I_m\otimes X^{(i)}_{j,1})\odot_s
\cdots\\\odot_s 
(I_m\otimes X^{(i)}_{j,|\omega_{j,i}|})
\big)
\ul{\bA}^{\omega_{j,i}}
\big(
I_m\otimes \wt{X}^{(i)}_{j}
\big)
\bB_{\ell_{i,j}}(I_m\otimes 
\ul{w}_{j,i})\ul{e}_i
\in\cC^{(m)}_{\ul{\bA},\ul{\bB}},
\end{multline*}
where 
$\ul{e}_1,\ldots,\ul{e}_m$ 
is the standard basis of 
$\KK^m,\,k_i\in\NN,\, 
\omega_{j,i}\in\cG_d,\, 
1\le\ell_{i,j}\le d$
and
$X_{j,1}^{(i)},\ldots,
X^{(i)}_{j,|\omega_{j,i}|},
\wt{X}^{(i)}_{j}
\in\KK^{\sts}$ 
for 
$1\le j\le k_i$ 
and 
$1\le i\le m $.

$\bullet$
Next, let 
$$\ul{u}=
\begin{bmatrix}
\ul{u}_1^T&\ldots&\ul{u}_m^T\\
\end{bmatrix}^T\in 
\obs{C}{\ul{\bA}}^{(m)},$$ 
then for all 
$\omega\in\cG_d,\,1\le j\le m$ 
and 
$X_1,\ldots,X_{|\omega|}\in\KK^{\sts}$, 
we have
\begin{multline*}
\ul{0}=(I_m\otimes C)\big(
(E_{jj}\otimes X_1)\odot_s\cdots\odot_s 
(E_{jj}
\otimes X_{|\omega|}) 
\big)\ul{\bA}^\omega \ul{u}
\\=(I_m\otimes C)\big(
E_{jj}\otimes 
\ul{\bA}^{\omega}
(X_1,\ldots,X_{|\omega|})
\big)\ul{u}=
C\ul{\bA}^{\omega}
(X_1,\ldots,X_{|\omega|})
\ul{u}_j,
\end{multline*} 
i.e., 
$\ul{u}_j\in \obs{C}{\ul{\bA}}^{(1)}$ 
and hence 
$\ul{u}\in 
\obs{C}{\ul{\bA}}^{(1)}\otimes\KK^m$.

$\bullet$
On the other hand, let 
$$\ul{u}=
\begin{bmatrix}
\ul{u}_1^T&\ldots&\ul{u}_m^T\\
\end{bmatrix}^T\in
\obs{C}{\ul{\bA}}^{(1)}
\otimes\KK^m,$$
then
for every 
$\omega=g_{i_1}\ldots g_{i_k}\in\cG_d$ 
and 
$Z_1,\ldots,Z_{k}\in\KK^{sm\times sm}$
we have
\begin{multline*}
(I_m\otimes C)(
Z_1\odot_s\cdots\odot_sZ_{|\omega|})
\ul{\bA}^{\omega} \ul{u}
=(I_m\otimes C)(Z_1)\bA_{i_1}
\cdots(Z_k)\bA_{i_k} \ul{u}
=\\
(I_m\otimes C)
\Big[
\bA_{i_1}\big(
Z^{(1)}_{p,q}\big)
\Big]_{1\le p,q\le m}\cdots
\Big[\bA_{i_k}\big(Z^{(k)}_{
p,q}\big)
\Big]_{1\le p,q\le m}
\begin{bmatrix}
\ul{u}_1\\
\vdots\\
\ul{u}_m\\
\end{bmatrix}=\ul{0}
\end{multline*}
as 
$\ul{u}_1,\ldots,\ul{u}_m\in
\obs{C}{\ul{\bA}}^{(1)}$ 
and each of the entries in the product is a linear combination 
of vectors of the form 
$C 
\bA_{i_1}\big(Z^{(1)}_{p_1,q_1}\big)
\cdots
\bA_{i_k}\big(Z^{(k)}_{p_k,q_k}\big)\ul{u}_i$ 
where 
$1\le i\le m$, 
which are all 
$\ul{0}$. 
\end{proof}
Two immediate consequences of Proposition
\ref{prop:28Sep18b}  
are the following.
If 
$(\ul{\bA},\ul{\bB})$ 
is  controllable, then 
$\cC_{\ul{\bA},\ul{\bB}}^{(m)}=\KK^{Lm}$
for all $m\in\NN$;
whereas if 
$\cC_{\ul{\bA},\ul{\bB}}^{(m)}=\KK^{Lm}$
for some $m\in\NN$, then 
$(\ul{\bA},\ul{\bB})$ 
is controllable. Similarly,  
if 
$(C,\ul{\bA})$ 
is  observable, then 
$\obs{C}{\ul{\bA}}^{(m)}=\{\ul{0}\}$
for all $m\in\NN$;
whereas if 
$\obs{C}{\ul{\bA}}^{(m)}=\{\ul{0}\}$
for some $m\in\NN$, then 
$(C,\ul{\bA})$ 
is observable 

Next, we show how 
the original definitions of
controllability and observability may be reformulated 
using the standard basis 
$\cE_s$ 
of 
$\KK^{\sts}$.
\begin{proposition}
Let
$\bA_1,\ldots,\bA_d:\KK^{\sts}
\rightarrow\KK^{L\times L}$
and
$\bB_1,\ldots,\bB_d:\KK^{\sts}
\rightarrow\KK^{L\times s}$ 
be linear mappings, and 
$C\in\KK^{s\times L}$. 
Then
\begin{enumerate}
\item[1.]
$(\ul{\bA},\ul{\bB})$ 
is controllable 
if and only if
$$\bigvee_{\omega\in\cG_d,\,
X_1,\ldots,X_{|\omega|+1}
\in\cE_s,\,
1\le k\le d} 
\Ima
\big(\ul{\bA}^\omega
(X_1,\ldots,
X_{|\omega|})
\bB_k
(X_{|\omega|+1})
\big)
=\KK^{L}.$$
\item[2.]
$(C,\ul{\bA})$ 
is  observable
if and only if
$$\bigcap_{\omega\in\cG_d,\,
X_1,\ldots,X_{|\omega|}\in\cE_s}
\ker \big(C\ul{\bA}^\omega
(X_1,\ldots,
X_{|\omega|})\big)=\{\ul{0}\}.$$
\end{enumerate}
\end{proposition}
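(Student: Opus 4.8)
The plan is to show that, in each of the two items, the two subspaces being compared are in fact \emph{equal}; the stated equivalences then follow immediately. In both items one inclusion is free: since $\cE_s\subseteq\KK^{\sts}$, the $\cE_s$-version of the controllable subspace appearing in item~1 is contained in $\cC_{\ul{\bA},\ul{\bB}}$, and dually the $\cE_s$-version of the un-observable subspace in item~2 contains $\obs{C}{\ul{\bA}}$. So all the work lies in the reverse inclusions.

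For those I would use multilinearity. The key observation is that $\ul{\bA}^{\omega}$ is multilinear in its $|\omega|$ matrix arguments, while each $\bB_k$ and left multiplication by $C$ are linear. Hence, writing each matrix argument $X_t\in\KK^{\sts}$ in the standard basis as $X_t=\sum_{1\le p,q\le s}(X_t)_{pq}\,E_{pq}$ and expanding, every generator $\ul{\bA}^{\omega}(X_1,\ldots,X_{|\omega|})\,\bB_k(X_{|\omega|+1})\,\ul{v}$ of $\cC_{\ul{\bA},\ul{\bB}}$, and likewise every vector $C\,\ul{\bA}^{\omega}(X_1,\ldots,X_{|\omega|})\,\ul{u}$ occurring in the definition of $\obs{C}{\ul{\bA}}$, becomes a $\KK$-linear combination of the corresponding quantities in which every matrix argument has been replaced by a standard basis matrix $E_{pq}\in\cE_s$. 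For item~1 this shows each column of $\ul{\bA}^{\omega}(X_1,\ldots,X_{|\omega|})\,\bB_k(X_{|\omega|+1})$ lies in the join taken over $\cE_s$, so $\cC_{\ul{\bA},\ul{\bB}}$ is contained in that join; for item~2 it shows that a vector $\ul{u}$ killed by every $C\,\ul{\bA}^{\omega}(X_1,\ldots,X_{|\omega|})$ with $X_1,\ldots,X_{|\omega|}\in\cE_s$ is automatically killed by every such map with arbitrary matrix arguments, so the $\cE_s$-intersection is contained in $\obs{C}{\ul{\bA}}$.

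Combining with the trivial inclusions gives equality of the respective subspaces, and therefore $\cC_{\ul{\bA},\ul{\bB}}=\KK^{L}$ exactly when the $\cE_s$-join equals $\KK^{L}$, and $\obs{C}{\ul{\bA}}=\{\ul{0}\}$ exactly when the $\cE_s$-intersection is $\{\ul{0}\}$ --- which is the assertion. I do not expect a genuine obstacle; the only point requiring care is the bookkeeping in the multilinear expansion, namely keeping the argument slots of $\ul{\bA}^{\omega}$ straight and not confusing the multilinear map with an ordinary matrix product. One could alternatively route the argument through the extended levels $\cC^{(m)}_{\ul{\bA},\ul{\bB}}$, $\obs{C}{\ul{\bA}}^{(m)}$ and Proposition~\ref{prop:28Sep18b}, but the direct multilinear expansion is the most transparent.
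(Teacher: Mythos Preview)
Your proposal is correct and follows essentially the same approach as the paper: both arguments use the multilinear expansion of $\ul{\bA}^{\omega}(X_1,\ldots,X_{|\omega|})\bB_k(X_{|\omega|+1})$ (respectively $C\ul{\bA}^{\omega}(X_1,\ldots,X_{|\omega|})$) in the standard basis $\cE_s$ to reduce general matrix arguments to basis matrices, with the trivial inclusion coming from $\cE_s\subseteq\KK^{\sts}$. Your phrasing is slightly cleaner in that you establish equality of the two subspaces directly rather than assuming one side equals $\KK^L$ (or $\{\ul{0}\}$) and deducing the other, but the mathematical content is identical.
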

\begin{proof}
Since 
$\cE_s\subseteq \KK^{\sts}$
the direction 
$\Longleftarrow$ of part \textbf{1}
is trivial. 
To prove the other direction, suppose 
$(\ul{\bA},\ul{\bB})$
is controllable, let
 $X_1,\ldots,X_{\ell+1}\in\KK^{\sts}$ 
 and
 $\omega=g_{j_1}
 \ldots g_{j_{\ell}}\in\cG_d$.
 Thus, one can write
$X_t=\sum_{p,q=1}^s 
E_{pq}\otimes x_{pq}^{(t)} 
\text{ for }1\le t\le \ell+1$ 
and by linearity of 
$\bA_k,\bB_k$ 
we get
\begin{multline*}
\ul{\bA}^\omega(X_1,\ldots,
X_\ell)\bB_{k}(X_{\ell+1})
=\Big(\prod_{t=1}^\ell \sum_{p_t,q_t=1}^s
x^{(t)}_{p_t,q_t}\bA_{j_t}(E_{p_tq_t})
\Big)\sum_{p,q=1}^s
x^{(t+1)}_{p,q}
\bB_k(E_{pq})
\\\in \bigvee_{\nu\in\cG_d,\,
Z_1,\ldots,Z_{\ell+1}
\in\cE_s,\,1\le k\le d} 
\Ima\ 
\big(\ul{\bA}^\nu
(Z_1,\ldots,
Z_\ell)
\bB_{k}(Z_{\ell+1})\big).
\end{multline*}
Therefore
$\cC_{\ul{\bA},\ul{\bB}}\subseteq 
\bigvee_{\nu\in\cG_d,\,
Z_1,\ldots,Z_{\ell+1}
\in\cE_s,\,1\le k\le d} 
\Ima\ 
\big(\ul{\bA}^\nu
(Z_1,\ldots,
Z_\ell)
\bB_{k}(Z_{\ell+1})
\big)$
whereas 
$\cC_{\ul{\bA},\ul{\bB}}=\KK^{L}$ 
implies the wanted equality. 
Similar proof holds for part 
\textbf{2}.
\end{proof}
The last part of this subsection discusses observability and controllability matrices.
The infinite block matrix 
\begin{align}
\label{eq:14Oct17a}
\fC_{\ul{\bA},\ul{\bB}}:=
row\begin{bmatrix}
\fC_{\ul{\bA},\ul{\bB}}^{(\omega,k)}\\
\end{bmatrix}_{(\omega,k)\in
\cG_d\times\{1,\ldots,d\}}
\end{align}
is called the 
\textbf{controllability matrix} 
associated with the tuple 
$(\ul{\bA},\ul{\bB})$,
where 
\begin{align*}
\fC_{\ul{\bA},\ul{\bB}}^{(\omega,k)}
\in\KK^{L\times s^3(|\omega|+1)}
\text{ is given by }
\fC_{\ul{\bA},\ul{\bB}}^{(\omega,k)}
:=row
\begin{bmatrix}
(\ul{\bA}^\omega
\cdot\bB_k)(\ul{Z})
\end{bmatrix}_{\ul{Z}
\in\cE_s^{|\omega|+1}}
\end{align*}
for each 
$(\omega,k)\in
\cG_d\times \{1,\ldots,d\}$
and the infinite block matrix
\begin{align}
\label{eq:19Oct17a}
\fO_{C,\ul{\bA}}:=col
\begin{bmatrix}
\fO_{C,\ul{\bA}}^{(\omega)}
\end{bmatrix}_{\omega\in\cG_d}
\end{align}
is called the 
\textbf{observability matrix} 
associated to the tuple 
$(C,\ul{\bA})$, 
where
\begin{align*}
\fO_{C,\ul{\bA}}^{(\omega)}\in
\KK^{s^{3}|\omega|\times L}
\text{ is given by }
\fO_{C,\ul{\bA}}^{(\omega)}:=col
\begin{bmatrix}
C\cdot\ul{\bA}^\omega(\ul{Z})
\end{bmatrix}_{\ul{Z}\in\cE_s^{|\omega|}}
\end{align*}
 for each
$\omega\in\cG_d$.
The following is a characterization of controllability 
and observability using the
controllability and observability matrices. 
Most of the arguments in
the proof are taken from linear algebra. 
\begin{proposition}
\label{prop:22Oct17d}
Let
$\bA_1,\ldots,\bA_d:\KK^{\sts}
\rightarrow\KK^{L\times L}$
and
$\bB_1,\ldots,\bB_d:\KK^{\sts}
\rightarrow\KK^{L\times s}$ 
be linear mappings, and 
$C\in\KK^{s\times L}$.  
The following are equivalent:
\begin{enumerate}
\item[1.]
$(\ul{\bA},\ul{\bB})$ 
is controllable $[$resp., 
$(C,\ul{\bA})$
is observable$]$.
\item[2.]
The matrix 
$\fC_{\ul{\bA},\ul{\bB}}$ 
$\big[$resp., 
$\fO_{C,\ul{\bA}}\big]$  
is right $[$resp., left$]$ invertible.
\item[3.]
The finite block matrix
$row\begin{bmatrix}
\fC_{\ul{\bA},\ul{\bB}}^{(\omega,k)}\\
\end{bmatrix}_{|\omega|\le\ell,\,1\le k\le d}$ 
$\Big[$resp., 
$col\begin{bmatrix}
\fO_{C,\ul{\bA}}^{(\omega)}\\
\end{bmatrix}_{|\omega|\le\ell}\Big]$
is right $[$resp., left$]$ 
invertible for some
$\ell\in\NN$.
\end{enumerate}
In that case, we can choose 
$\ell\le L-1$.
\end{proposition}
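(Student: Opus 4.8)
The plan is to translate conditions $1$ and $3$ into rank statements about the block matrices $\fC_{\ul{\bA},\ul{\bB}}$ and $\fO_{C,\ul{\bA}}$, and then to show, via a stabilization argument, that the relevant chains of subspaces of $\KK^L$ have stabilized after $L-1$ steps. I describe the controllability half; the observability half is dual.

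For $\ell\ge0$ let $\cC^{[\ell]}$ denote the span of all the subspaces $\Ima\big(\ul{\bA}^\omega(X_1,\dots,X_{|\omega|})\bB_k(X_{|\omega|+1})\big)$ with $\omega\in\cG_d$, $|\omega|\le\ell$, $X_1,\dots,X_{|\omega|+1}\in\KK^{\sts}$ and $1\le k\le d$. First I would record, using linearity of the $\bA_k,\bB_k$ and the reformulation of controllability over the standard basis $\cE_s$ (the preceding proposition), that the column space of the finite block matrix appearing in condition $3$ equals $\cC^{[\ell]}$, and that the column space of the infinite matrix $\fC_{\ul{\bA},\ul{\bB}}$ equals $\bigcup_{\ell\ge0}\cC^{[\ell]}=\cC_{\ul{\bA},\ul{\bB}}$. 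Since over a field a matrix is right invertible exactly when it has full row rank, this gives $1\iff2$ at once; and $3\Rightarrow2$ is immediate because the matrix in $3$ is obtained from $\fC_{\ul{\bA},\ul{\bB}}$ by deleting block-columns, so its having row rank $L$ forces $\fC_{\ul{\bA},\ul{\bB}}$ to have row rank $L$ as well.

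The substantive implication is $2\Rightarrow3$ with the bound $\ell\le L-1$. The chain $\cC^{[0]}\subseteq\cC^{[1]}\subseteq\cdots\subseteq\KK^L$ satisfies the recursion
\[
\cC^{[\ell+1]}=\cC^{[\ell]}+\sum_{1\le k\le d,\ X\in\KK^{\sts}}\bA_k(X)\big(\cC^{[\ell]}\big)
\]
(split a word of length $\le\ell+1$ as $g_i\omega'$ and use linearity). Hence, if $\cC^{[\ell]}=\cC^{[\ell+1]}$ for some $\ell$, feeding this equality into the recursion at the next level yields $\cC^{[\ell+2]}=\cC^{[\ell+1]}$, and by induction $\cC^{[\ell']}=\cC^{[\ell]}$ for all $\ell'\ge\ell$; thus $\cC_{\ul{\bA},\ul{\bB}}=\cC^{[\ell_0]}$, where $\ell_0$ is the first index at which the chain stabilizes. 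In the controllable case not every $\bB_k$ vanishes (otherwise $\cC_{\ul{\bA},\ul{\bB}}=\{\ul{0}\}$), so $\dim\cC^{[0]}\ge1$, and since each strict inclusion in the chain raises the dimension by at least one, $L=\dim\cC^{[\ell_0]}\ge\dim\cC^{[0]}+\ell_0\ge1+\ell_0$, i.e.\ $\ell_0\le L-1$. Therefore the block matrix in $3$ with $\ell=L-1$ already has column space $\KK^L$, hence is right invertible, proving $3$ with the asserted bound.

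The observability statement is obtained by transposing everything: one works instead with the decreasing chain of subspaces $\obs{C}{\ul{\bA}}^{[\ell]}:=\bigcap_{|\omega|\le\ell,\ X_i\in\KK^{\sts}}\ker\big(C\ul{\bA}^\omega(X_1,\dots,X_{|\omega|})\big)$, which is precisely the right null space of the finite block matrix $col\big[\fO_{C,\ul{\bA}}^{(\omega)}\big]_{|\omega|\le\ell}$; left invertibility over a field is full column rank; the stabilization recursion becomes $\obs{C}{\ul{\bA}}^{[\ell+1]}=\obs{C}{\ul{\bA}}^{[\ell]}\cap\bigcap_{k,X}\bA_k(X)^{-1}\big(\obs{C}{\ul{\bA}}^{[\ell]}\big)$; and since $C\ne0$ in the observable case one has $\dim\ker C\le L-1$, so the same dimension count gives $\ell\le L-1$. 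The one genuinely nonroutine ingredient is this stabilization lemma together with the dimension count --- the free (noncommutative) analogue of the Cayley--Hamilton argument underlying classical controllability theory --- while everything else is the bookkeeping that identifies the spanning and intersecting descriptions of the (un)controllable and (un)observable subspaces with the column and row ranks of $\fC_{\ul{\bA},\ul{\bB}}$ and $\fO_{C,\ul{\bA}}$.
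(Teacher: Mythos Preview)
Your proposal is correct and follows essentially the same approach as the paper: both translate conditions $1$--$3$ into statements about column (respectively null) spaces of the controllability (respectively observability) matrices, and obtain the bound $\ell\le L-1$ via the identical stabilization argument on the chain $\cC^{[0]}\subseteq\cC^{[1]}\subseteq\cdots\subseteq\KK^L$ (respectively the decreasing chain of kernels). The only cosmetic difference is that you state the recursion $\cC^{[\ell+1]}=\cC^{[\ell]}+\sum_{k,X}\bA_k(X)\cC^{[\ell]}$ explicitly, whereas the paper leaves it as ``easily seen''; otherwise the arguments coincide.
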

\begin{proof}
\uline{\textbf{1 $\Longrightarrow$ 2:}}
If 
$(\ul{\bA},\ul{\bB})$ 
is controllable, then
$\ul{e}_j\in\KK^L$ 
can be written as
$$\ul{e}_j=\sum_{i=1}^{k_j}
\ul{\bA}^{\omega_{j,i}}
\big(X^{(j)}_{i,1},\ldots,
X^{(j)}_{i,|\omega_{j,i}|}\big)
\bB_{\ell_{j,i}}
\big(X^{(j)}_{i,|\omega_{j,i}|+1}\big)
\ul{u}_{j,i},$$
where 
$k_j\in\NN,\,\omega_{j,i}\in\cG_d,\,
X_{i,1}^{(j)}
,\ldots,X^{(j)}_{i,|\omega_{j,i}|+1}
\in\KK^{\sts}$
and
$1\le\ell_{j,i}\le d$, 
for every 
$1\le i\le k_j$ 
and 
$1\le j\le L$.
Thus 
$\ul{e}_1,\ldots,\ul{e}_L$ 
belong to the column span of the matrix 
$\fC_{\ul{\bA},\ul{\bB}}$ 
and hence 
$\fC_{\ul{\bA},\ul{\bB}}$ 
is right invertible. 

\uline{\textbf{2 $\Longrightarrow$ 3:}}
If the infinite matrix 
$\fC_{\ul{\bA},\ul{\bB}}$
is right invertible, it means that its column span is equal to
$\KK^L$, 
however this span of infinitely many vectors of length 
$L$ 
must coincide with a span of finitely many of the columns, 
which easily implies part 
\textbf{3}.

\uline{\textbf{3 $\Longrightarrow$ 1:}}
If 
\textbf{3}
holds then the column span of 
$\fC_{\ul{\bA},\ul{\bB}}$
contains 
$\ul{e}_1,\ldots,\ul{e}_L$ 
and thus is equal to 
$\KK^L$, i.e., 
$\cC_{\ul{\bA},\ul{\bB}}=\KK^L$
and the tuple
$(\ul{\bA},\ul{\bB})$ 
is controllable.

$\bullet$ Suppose next that 
$(\ul{\bA},\ul{\bB})$
is controllable, and define
$$\cC_{\ell}:=\bigvee_{|\omega|\le\ell,\,
1\le k\le d}
\Ima\big(
\fC_{\ul{\bA},\ul{\bB}}^{(\omega,k)}
\big)=
\bigvee_{|\omega|\le\ell,\,
1\le k\le d}
\Ima
(\ul{\bA}^{\omega}\cdot\bB_k
)$$
for 
$\ell\ge0$. 
Then 
$\cC_0\subseteq \cC_1\subseteq
\ldots \subseteq \cC_{\ell}
\subseteq \cC_{\ell+1}\subseteq\ldots$
are all subspaces of 
$\KK^L$,
whereas 
 the controllability of 
$(\ul{\bA},\ul{\bB})$ 
implies that 
$\bigcup_{\ell=0}^\infty \cC_{\ell}
=\KK^L$ 
and 
$\cC_0\ne\{0\}$.
Moreover, it is easily seen that if 
$\cC_{\ell_0}=\cC_{\ell_0+1}$ 
for some 
$\ell_0\ge0$, 
then 
$\cC_{\ell_0+k}=\cC_{\ell_0}$ 
for all 
$k\ge0$ 
and hence
$$\KK^L=\bigcup_{\ell=0}^\infty \cC_{\ell}
=\cC_0\cup\ldots\cup \cC_{\ell_0}
=\cC_{\ell_0}.$$
Therefore, the sequence
$1\le\dim(\cC_0)\le \dim(\cC_1)\le
\ldots\le\dim(\cC_L)\le\ldots \le L$
must coincide after at most 
$L-1$ 
inequalities, i.e.,  
$\dim(\cC_{L-1})=L$ 
which means that 
$\KK^{L}=\cC_{\ul{\bA},\ul{\bB}}
=\cC_{L-1}$.

$\bullet$ As for observability, to prove that \textbf{1 $\Longrightarrow$ 2 $\Longrightarrow$ 3
$\Longrightarrow$ 1} 
we use the same arguments as above; so we only show how to get the bound on the size of the matrix.
Suppose that 
$(C,\ul{\bA})$ 
is observable and define 
$$\cN\cO_{\ell}:=
\bigcap_{|\omega|\le\ell,\,
\ul{X}\in(\KK^{\sts}
)^{|\omega|}}\ker 
\big(C\ul{\bA}^{\omega}
(\ul{X})\big)$$
for 
$\ell\ge0$. Then 
$\cN\cO_0\supseteq
\cN\cO_1\supseteq\ldots
\supseteq\cN\cO_\ell\supseteq
\cN\cO_{\ell+1}\supseteq\ldots$
are all subspaces of 
$\KK^L$,
whereas the observability of
$(C,\ul{\bA})$
implies that 
$\bigcap_{\ell\ge0}
\cN\cO_{\ell}=\{\ul{0}\}$
and
$\cN\cO_0\ne\KK^L$.
It is easily seen that if 
$\cN\cO_{\ell_0}=
\cN\cO_{\ell_0+1}$ 
for some 
$\ell_0\ge0$, 
then
$\cN\cO_{\ell_0+k}=
\cN\cO_{\ell_0}$
for all $k\ge0$ and hence
$$\{\ul{0}\}=\bigcap_{\ell\ge0}
\cN\cO_{\ell}=
\cN\cO_{\ell_0}.$$
Therefore the sequence
$L >\dim(\cN\cO_0)\ge
\dim(\cN\cO_1)\ge\ldots\ge
\dim(\cN\cO_L)\ge\ldots$
must coincide after at most
$L-1$
inequalities, i.e., 
$\cN\cO_{L-1}=\{\ul{0}\}$.
\end{proof}

\subsection{Minimal realizations}
\label{subsec1}
A nc Fornasini--Marchesini realization of the form
(\ref{eq:18Aug18a}) 
is said to be 
\begin{enumerate}
\item[$\bullet$]
\textbf{controllable}  
if the tuple 
$(\ul{\bA},\ul{\bB})$ 
is controllable; 
\item[$\bullet$]
\textbf{observable} if the tuple 
$(C,\ul{\bA})$ 
is observable. 
\end{enumerate}
If 
$\cR$
is a nc Fornasini--Marchesini realization of a nc rational expression 
$R$ 
centred at
$\ul{Y}$, 
then it is said to be
\begin{enumerate}
\item[$\bullet$]
\textbf{minimal} 
if the dimension 
$L$ 
is the smallest integer for which 
$R$ 
admits such 
a realization, i.e., if 
$\cR^\prime$
is a nc Fornasini--Marchesini realization of 
$R$ 
centred at 
$\ul{Y}$ 
of dimension 
$L^\prime$, 
then 
$L\le L^\prime$.
\end{enumerate}
\begin{remark}
\label{rem:14Apr19a}
In fact, the minimality of a realization 
$\cR$ 
is w.r.t rational functions, 
meaning that if 
$\cR$
is a minimal nc Fornasini--Marchesini realization of 
$R$ 
centred at 
$\ul{Y}$, 
then it is also a minimal nc Fornasini--Marchesini realization 
of any nc rational expression 
$\wt{R}$ 
which is
$(\KK^d)_{nc}-$evaluation equivalent to 
$R$ (cf. Lemma
\ref{lem:30Jan18b}).
\end{remark}

We proceed by showing that every two controllable and observable
nc Fornasini--Marchesini realizations centred at
$\ul{Y}$
of 
$(\KK^d)_{nc}-$evaluation equivalent nc rational 
expressions must be similar, where most of the ideas of the proof are taken  from 
\cite{BC,BGK}. 
We will also use the following facts, see
\cite[Theorem 4.8]{KV4} and  
\cite{KV1}: 

 $\bullet$
If $R$
is a nc rational expression
and 
$\ul{Y}\in dom_s(R)$, 
then 
\begin{align*}
Nilp(\ul{Y};sm)
:=\left\{
\ul{X}\in(\KK^{sm\times sm})^d: 
\ul{X}-I_m\otimes \ul{Y}
\text{ is jointly nilpotent}
\right\}
\subseteq dom_{sm}(R)
\end{align*}
for every 
$m\in\NN$,
where a tuple
$\ul{Z}=(Z_1,\ldots,Z_d)
\in(\KK^{sm\times sm})^d$ 
is called jointly nilpotent if there exists 
$\kappa\in\NN$ 
such that 
$\ul{Z}^{\odot_s \omega}=0$ 
for all
$\omega\in\cG_d$ 
satisfying 
$|\omega|\ge\kappa$.
 
 $\bullet$ 
$R\mid _{Nilp(\ul{Y})}$
is a nc function on the nilpotent ball around 
$\ul{Y}$, 
that is
\begin{align*}
Nilp(\ul{Y}):=
\coprod_{m=1}^\infty Nilp(\ul{Y};sm).
\end{align*}

$\bullet$ 
Every nc function on 
$Nilp(\ul{Y})$ 
has a power series expansion around 
$\ul{Y}$ 
of the form
\begin{align*}
\sum_{\omega\in\cG_d}
(\ul{X}-I_m\otimes \ul{Y}
)^{\odot_s\omega}\cR_{\omega},\, 
\ul{X}\in 
(\KK^{sm\times sm})^d
\end{align*}
where
$\cR_{\omega}$ 
are
$|\omega|-$linear mappings from 
$(\KK^{\sts})^{|\omega|}$ 
to
$\KK^{\sts}$, called the Taylor--Taylor coefficients and
are uniquely determined, see 
\cite[Theorem 5.9]{KV1}; 
notice that the sum is actually finite.
\begin{lemma}
\label{lem:8Nov18a}
If 
$R$ 
is a nc rational expression in
$x_1,\ldots,x_d$
over
$\KK$ 
and 
$\cR$ 
is a nc Fornasini--Marchesini realization of
$R$ 
centred at
$\ul{Y}\in dom_s(R)$, 
of the form 
$(\ref{eq:18Aug18a})$,
then the Taylor--Taylor coefficients of 
$R$ 
are given by 
$\cR_{\emptyset}:=D$
and the multilinear mappings
$\cR_{\omega g_k}:
=C\cdot\ul{\bA}^\omega\cdot
\bB_k:\left(\KK^{s\times s}\right)^{
\ell+1}\rightarrow 
\KK^{s\times s}$
which act as
\begin{align}
\label{eq:25Jan19b}
\cR_{\omega g_k}
(Z_1,\ldots,Z_{\ell+1})
=C\bA_{i_1}(Z_1)
\cdots
\bA_{i_\ell}(Z_{\ell})\bB_k(Z_{\ell+1})
\end{align}
for $Z_1,\ldots,Z_{\ell+1}
\in\KK^{s\times s},\,
\omega=g_{i_1}\ldots g_{i_\ell}\in\cG_d$ 
and 
$1\le k\le d$. 
Moreover, if 
$Z_1,\ldots,Z_{\ell+1}\in
\KK^{sm\times sm}$, 
then
\begin{align*}
(Z_1\odot_s \cdots\odot_s 
Z_{\ell+1})\cR_{\omega g_k}
=(I_m\otimes C)(Z_1)\bA_{i_1}
\cdots
(Z_{\ell})\bA_{\ell}(Z_{\ell+1})\bB_k.
\end{align*}
\end{lemma}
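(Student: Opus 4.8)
The plan is to exploit that $\cR$ and $R$ coincide on the nilpotent ball $Nilp(\ul Y)$, so that the Taylor--Taylor coefficients of $R$ around $\ul Y$ are precisely those of $\cR$, and the latter can be read off from a terminating Neumann series for the resolvent in (\ref{eq:18Aug18a}). First I would note that by the first bulleted fact preceding the lemma $Nilp(\ul Y;sm)\subseteq dom_{sm}(R)$ for every $m$, while $\cR$ being a realization of $R$ (Definition \ref{def:25Sep18a}) gives $dom_{sm}(R)\subseteq DOM_{sm}(\cR)$ and $R(\ul X)=\cR(\ul X)$ there; hence it suffices to expand $\cR$ on $Nilp(\ul Y)$ and recognize the coefficients.

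The key technical ingredient, which also yields the ``Moreover'' assertion, is the compatibility of the blockwise extension of a linear map with the faux product. Recall that for $Z\in\KK^{sm\times sm}$ written as an $m\times m$ block matrix $[Z_{pq}]$ with $Z_{pq}\in\KK^{\sts}$ the extended action is $(Z)\bT=[\bT(Z_{pq})]$. I would prove, for $\omega=g_{i_1}\cdots g_{i_\ell}$ and $Z_1,\dots,Z_{\ell+1}\in\KK^{sm\times sm}$, the two identities
\[
(Z_1)\bA_{i_1}\cdots(Z_\ell)\bA_{i_\ell}=(Z_1\odot_s\cdots\odot_s Z_\ell)\,\ul{\bA}^\omega
\]
and
\[
(I_m\otimes C)\,(Z_1)\bA_{i_1}\cdots(Z_\ell)\bA_{i_\ell}\,(Z_{\ell+1})\bB_k
=(Z_1\odot_s\cdots\odot_s Z_{\ell+1})\,\bigl(C\cdot\ul{\bA}^\omega\cdot\bB_k\bigr),
\]
where on the right $\ul{\bA}^\omega$, resp.\ $C\cdot\ul{\bA}^\omega\cdot\bB_k$, is regarded as a linear map on $(\KK^{\sts})^{\otimes\ell}$, resp.\ $(\KK^{\sts})^{\otimes(\ell+1)}$, and applied entrywise to the faux product. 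These follow by comparing $(p,q)$ blocks and inducting on $|\omega|$: the $(p,q)$ block of the left-hand side is the sum over intermediate matrix indices $r_1,\dots,r_\ell$ of $C\bA_{i_1}((Z_1)_{pr_1})\bA_{i_2}((Z_2)_{r_1r_2})\cdots\bB_k((Z_{\ell+1})_{r_\ell q})$, which is exactly the value, on the $(p,q)$ entry $\sum_{r_1,\dots,r_\ell}(Z_1)_{pr_1}\otimes\cdots\otimes(Z_{\ell+1})_{r_\ell q}$ of the faux product, of the multilinear map $(X_1,\dots,X_{\ell+1})\mapsto C\bA_{i_1}(X_1)\cdots\bB_k(X_{\ell+1})$ (equivalently of its linearization on the tensor product). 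Taking $Z_{\ell+1}$ an arbitrary fixed matrix then gives the ``Moreover'' statement.

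Next I would fix $\ul X\in Nilp(\ul Y;sm)$ and set $\Delta_k:=X_k-I_m\otimes Y_k$. If $\ul X-I_m\otimes\ul Y$ is jointly nilpotent of index $\kappa$ then $\Delta_{i_1}\odot_s\cdots\odot_s\Delta_{i_\ell}=(\ul X-I_m\otimes\ul Y)^{\odot_s\omega}=0$ for $|\omega|=\ell\ge\kappa$, so by the first displayed identity $\bigl(\sum_{k}(\Delta_k)\bA_k\bigr)^{\ell}=\sum_{|\omega|=\ell}(\Delta_{i_1}\odot_s\cdots\odot_s\Delta_{i_\ell})\,\ul{\bA}^\omega=0$ for $\ell\ge\kappa$; hence $\bigl(I_{Lm}-\sum_k(\Delta_k)\bA_k\bigr)^{-1}=\sum_{\ell=0}^{\kappa-1}\bigl(\sum_k(\Delta_k)\bA_k\bigr)^\ell$. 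Substituting this finite sum into $\cR(\ul X)$, expanding the powers over words, and applying the second displayed identity termwise yields
\[
\cR(\ul X)=I_m\otimes D+\sum_{\omega\in\cG_d}\sum_{k=1}^d(\ul X-I_m\otimes\ul Y)^{\odot_s(\omega g_k)}\,\bigl(C\cdot\ul{\bA}^\omega\cdot\bB_k\bigr),
\]
a finite sum. This exhibits $R\mid_{Nilp(\ul Y)}=\cR\mid_{Nilp(\ul Y)}$ as a power series expansion around $\ul Y$ of the form recalled before the lemma, with $\emptyset$-coefficient $D$ and $\omega g_k$-coefficient $C\cdot\ul{\bA}^\omega\cdot\bB_k$; by uniqueness of the Taylor--Taylor coefficients (\cite[Theorem 5.9]{KV1}) these are therefore the Taylor--Taylor coefficients of $R$. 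The one step requiring genuine care is the bookkeeping in the second paragraph — which tensor slot is fed to which operator, and that $\ul{\bA}^\omega$ is extended linearly over $(\KK^{\sts})^{\otimes\ell}$ rather than by taking Kronecker products of matrices; once that is pinned down, the argument reduces to a geometric series together with the cited uniqueness.
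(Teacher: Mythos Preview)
Your proof is correct and follows essentially the same route as the paper: use $Nilp(\ul Y;sm)\subseteq dom_{sm}(R)\subseteq DOM_{sm}(\cR)$, expand the resolvent in $\cR$ via a terminating Neumann series over words in $\cG_d$, and invoke uniqueness of the Taylor--Taylor coefficients. Your treatment is in fact more explicit than the paper's on two points --- the blockwise verification of $(Z_1)\bA_{i_1}\cdots(Z_\ell)\bA_{i_\ell}=(Z_1\odot_s\cdots\odot_s Z_\ell)\,\ul{\bA}^\omega$ and the ``Moreover'' identity --- which the paper leaves implicit in its appeal to the Neumann series in $(\mathbf T(\KK^{\sts}))^{m\times m}$.
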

\begin{proof}
Let
$\ul{X}\in Nilp(\ul{Y};sm)$, 
then
we use the Neumann series w.r.t 
nilpotent elements in
$\big(\mathbf{T}(\KK^{\sts})
\big)^{\mtm}$--- 
where 
$\mathbf{T}(\KK^{\sts})$ 
is the tensor algebra of 
$\KK^{\sts}$--- 
to obtain that
\begin{align*}
\Big(
I_{Lm}-\sum_{k=1}^d
(X_k-I_m\otimes Y_k)\bA_k
\Big)
\sum_{\omega\in\cG_d}
(\ul{X}-I_m\otimes \ul{Y})
^{\odot_s\omega}\ul{\bA}^{\omega}
=I_{Lm},
\end{align*}
where the second multiplicative term 
is actually a finite sum. As 
$\ul{Y}\in dom_s(R)$ 
it follows that 
$Nilp(\ul{Y};sm)\subseteq dom_{sm}(R)$, 
thus 
$\ul{X}\in dom_{sm}(R)
\subseteq DOM_{sm}(\cR)$ 
and
\begin{multline*}
R(\ul{X})=\cR(\ul{X})=(I_m\otimes D)
+(I_m\otimes C)
\Big(
\sum_{\omega\in
\cG_d}(\ul{X}-I_m\otimes \ul{Y}
)^{\odot_s\omega}
\ul{\bA}^\omega
\Big)
\sum_{k=1}^d 
(X_k-I_m\otimes Y_k)
\bB_k
\\=I_m\otimes D+\sum_{\omega
\in\cG_d,\,1\le k\le d}
(\ul{X}-I_m\otimes \ul{Y})^{\odot_s
(\omega g_k)}\cR_{\omega g_k}
=\sum_{\nu\in\cG_d}(\ul{X}-I_m\otimes 
\ul{Y})^{\odot_s\nu}\cR_{\nu},,
\end{multline*}
where the multilinear mappings
$(\cR_{\omega g_k})$ 
are given by 
(\ref{eq:25Jan19b}). 
However,
$R\mid _{Nilp(\ul{Y})}$ 
is a nc function on
$Nilp(\ul{Y})$ 
and so it has a unique Taylor--Taylor 
expansion, given by the coefficients 
$(\cR_\nu)_{\nu\in\cG_d}$.
\end{proof}
\begin{theorem}[Similarity of minimal realizations]
\label{thm:2May16b}
Let 
$R_1$ 
and 
$R_2$
be two nc rational expressions in 
$x_1,\ldots,x_d$ 
over 
$\KK$, 
which admit nc Fornasini--Marchesini realizations
$$\cR_1(\ul{X})=D^1+C^1
\Big(
I_{L_1}-\sum_{k=1}^d
\bA_k^1(X_k-Y_k)
\Big)^{-1}
\sum_{k=1}^d
\bB_k^1(X_k-Y_k)$$
 and 
$$\cR_2(\ul{X})=D^2+C^2
\Big(
I_{L_2}-\sum_{k=1}^d
\bA_k^2(X_k-Y_k)\Big)^{-1}
\sum_{k=1}^d
\bB_k^2(X_k-Y_k),$$ 
respectively, both centred at 
$\ul{Y}\in(\KK^{\sts})^d$.  
Assume  both 
$\cR_1$ and
$\cR_2$ are
controllable and observable. 

If 
$R_1$ 
and 
$R_2$ 
are 
$(\KK^d)_{nc}-$evaluation equivalent, then 
$\cR_1$ 
and
$\cR_2$
are \textbf{uniquely similar}, i.e.,
$L_1=L_2,\,D^1=D^2$
and there exists a unique invertible matrix 
$T\in\KK^{L_1\times L_1}$ 
such that 
\begin{align}
\label{eq:19Oct17c}
C^2=C^1T^{-1},\, 
\bB^2_k=T\cdot\bB^1_k 
\text{ and }\bA_k^2=
T\cdot\bA_k^1\cdot T^{-1},\,
1\le k\le d.
\end{align}
Moreover,
$$DOM_{sm}(\cR_1)=DOM_{sm}(\cR_2)
\text{ and } 
\cR_1(\ul{X})=\cR_2(\ul{X}),\,
\forall \ul{X}\in 
DOM_{sm}(\cR_1)$$
for every
$m\in\NN$,
and for any unital 
$\KK-$algebra $\cA$:
$$DOM^{\cA}(\cR_1)=DOM^{\cA}(\cR_2)
\text{ and }\cR_1^{\cA}(\ul{\fA})
=\cR_2^{\cA}(\ul{\fA}),\,
\forall \ul{\fA}\in DOM^{\cA}(\cR_1).$$
\end{theorem}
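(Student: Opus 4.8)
The plan is to pass from the two realizations to their Taylor--Taylor expansions around $\ul{Y}$, repackage the common data as a single Hankel-type factorization of the associated observability and controllability matrices, and then run the classical minimal-realization similarity argument in the spirit of \cite{BGK,BC}.

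First I would note that, since $\cR_i$ is a realization of $R_i$ centred at $\ul{Y}$, Definition \ref{def:25Sep18a} gives $\ul{Y}\in dom_s(R_i)$, so $Nilp(\ul{Y};sm)\subseteq dom_{sm}(R_i)$ for every $m$ and $R_i\mid_{Nilp(\ul{Y})}$ is a nc function. As $R_1$ and $R_2$ are $(\KK^d)_{nc}$-evaluation equivalent they coincide on the common nilpotent ball, hence have the same Taylor--Taylor expansion around $\ul{Y}$; by Lemma \ref{lem:8Nov18a} and uniqueness of this expansion one obtains $D^1=D^2$ together with the equality of multilinear maps
\begin{equation*}
C^1\,\ul{\bA}^{1,\omega}(\ul Z)\,\bB^1_k(W)=C^2\,\ul{\bA}^{2,\omega}(\ul Z)\,\bB^2_k(W)
\end{equation*}
for every word $\omega$, every $1\le k\le d$, and all arguments. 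Because $\ul{\bA}^{i,\nu}(\ul Z)\,\ul{\bA}^{i,\omega}(\ul W)=\ul{\bA}^{i,\nu\omega}(\ul Z,\ul W)$, these identities say exactly that the (block) observability and controllability matrices $(\ref{eq:19Oct17a})$, $(\ref{eq:14Oct17a})$ satisfy $\fO_{C^1,\ul{\bA}^1}\,\fC_{\ul{\bA}^1,\ul{\bB}^1}=\fO_{C^2,\ul{\bA}^2}\,\fC_{\ul{\bA}^2,\ul{\bB}^2}$ block by block.

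Next I would run the similarity argument. Since each $\cR_i$ is controllable and observable, Proposition \ref{prop:22Oct17d} provides a right inverse $\fC_1^{+}$ of $\fC_{\ul{\bA}^1,\ul{\bB}^1}$ and a left inverse $\fO_2^{+}$ of $\fO_{C^2,\ul{\bA}^2}$, each with finitely many nonzero blocks. Setting $T:=\fO_2^{+}\fO_{C^1,\ul{\bA}^1}$ one checks, using the Hankel identity and the one-sided inverse relations, that also $T=\fC_{\ul{\bA}^2,\ul{\bB}^2}\fC_1^{+}$, so that $\fO_{C^1,\ul{\bA}^1}=\fO_{C^2,\ul{\bA}^2}T$ and $T\fC_{\ul{\bA}^1,\ul{\bB}^1}=\fC_{\ul{\bA}^2,\ul{\bB}^2}$ hold simultaneously. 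The symmetric construction gives an $S$ with $\fO_{C^2,\ul{\bA}^2}=\fO_{C^1,\ul{\bA}^1}S$ and $S\fC_{\ul{\bA}^2,\ul{\bB}^2}=\fC_{\ul{\bA}^1,\ul{\bB}^1}$; left-invertibility of the $\fO$'s and right-invertibility of the $\fC$'s then force $ST=I_{L_1}$ and $TS=I_{L_2}$, hence $L_1=L_2$ and $S=T^{-1}$. Reading off the $\omega=\emptyset$ block of $\fO_{C^1,\ul{\bA}^1}=\fO_{C^2,\ul{\bA}^2}T$ gives $C^1=C^2T$; reading off the $(\emptyset,k)$ block of $T\fC_{\ul{\bA}^1,\ul{\bB}^1}=\fC_{\ul{\bA}^2,\ul{\bB}^2}$ gives $T\bB^1_k=\bB^2_k$; and, for fixed $j$ and $Z$, comparing the columns indexed by words of the form $g_j\omega$ shows $(T\bA^1_j(Z)-\bA^2_j(Z)T)\,\ul{\bA}^{1,\omega}(\ul W)\bB^1_k(W')=0$ for all $\omega$ and all arguments, whence $T\bA^1_j(Z)=\bA^2_j(Z)T$ since the vectors $\ul{\bA}^{1,\omega}(\ul W)\bB^1_k(W')$ span $\KK^{L_1}$ by controllability; this is $(\ref{eq:19Oct17c})$. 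For uniqueness, any $T$ satisfying $(\ref{eq:19Oct17c})$ also satisfies $T\fC_{\ul{\bA}^1,\ul{\bB}^1}=\fC_{\ul{\bA}^2,\ul{\bB}^2}$ (insert $T^{-1}T$ between consecutive factors $\bA^1_{i_1}(Z_1),\ldots,\bA^1_{i_\ell}(Z_\ell),\bB^1_k(W)$ and telescope), and $\fC_{\ul{\bA}^1,\ul{\bB}^1}$ is right-invertible, so $T$ is determined.

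Finally, the coincidence of domains and of evaluations is pure conjugation bookkeeping. From $(\ref{eq:19Oct17c})$ and the definition of the extension of a linear map to $sm\times sm$ matrices one gets, for every $\ul X\in(\KK^{sm\times sm})^d$,
\begin{equation*}
I_{Lm}-\sum_{k=1}^d(X_k-I_m\otimes Y_k)\bA^2_k=(I_m\otimes T)\Big(I_{Lm}-\sum_{k=1}^d(X_k-I_m\otimes Y_k)\bA^1_k\Big)(I_m\otimes T)^{-1},
\end{equation*}
together with $\sum_k(X_k-I_m\otimes Y_k)\bB^2_k=(I_m\otimes T)\sum_k(X_k-I_m\otimes Y_k)\bB^1_k$; combined with $C^2=C^1T^{-1}$ and $D^1=D^2$ this yields $DOM_{sm}(\cR_1)=DOM_{sm}(\cR_2)$ and, after cancelling the factors $I_m\otimes T$, $\cR_1(\ul X)=\cR_2(\ul X)$ there. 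Repeating the computation with $I_m\otimes(\cdot)$ replaced by $(\cdot)\otimes 1_{\cA}$ and conjugation by the matrix $T\otimes 1_{\cA}$ (invertible in $\cA^{L\times L}$ with inverse $T^{-1}\otimes 1_{\cA}$), and using the analogous identity for the $\cA$-extensions $\bA^{i,\cA}_k$, $\bB^{i,\cA}_k$, gives $DOM^{\cA}(\cR_1)=DOM^{\cA}(\cR_2)$ and $\cR_1^{\cA}=\cR_2^{\cA}$ on it for every unital $\KK$-algebra $\cA$. The hard part is the middle step: setting up the block observability and controllability matrices so that evaluation equivalence becomes the single identity $\fO_{C^1,\ul{\bA}^1}\fC_{\ul{\bA}^1,\ul{\bB}^1}=\fO_{C^2,\ul{\bA}^2}\fC_{\ul{\bA}^2,\ul{\bB}^2}$, and then extracting from it not only the relations for $C$ and $\bB_k$ but the full operator conjugation $\bA^2_k=T\cdot\bA^1_k\cdot T^{-1}$; once $(\ref{eq:19Oct17c})$ is in hand, the remaining verifications are routine.
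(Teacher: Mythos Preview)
Your proof is correct and follows essentially the same route as the paper: equate Taylor--Taylor coefficients via Lemma~\ref{lem:8Nov18a} to obtain $D^1=D^2$ and $C^1\cdot(\ul{\bA}^1)^\omega\cdot\bB^1_k=C^2\cdot(\ul{\bA}^2)^\omega\cdot\bB^2_k$, then run the classical minimal-realization similarity argument from \cite{BGK,BC}. The only cosmetic difference is that you package the construction of $T$ through the infinite observability/controllability matrices $\fO,\fC$ and their one-sided inverses (Proposition~\ref{prop:22Oct17d}), whereas the paper defines the intertwiner directly on spanning vectors $(\ul{\bA}^1)^\omega(\ul X)\bB^1_k(X')\ul u\mapsto(\ul{\bA}^2)^\omega(\ul X)\bB^2_k(X')\ul u$ and checks well-definedness, injectivity, and surjectivity by hand; these are two standard presentations of the same idea.
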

\begin{proof}
From Lemma
\ref{lem:8Nov18a}, the Taylor--Taylor coefficients of 
the nc rational expressions 
$R_1$ 
and
$R_2$ (w.r.t the centre
$\ul{Y}$) are  
\begin{align*}
\cR_{\omega g_k}^{(1)}
=C^1\cdot\left(\ul{\bA}^1\right)^\omega\cdot
\bB^1_k
\text{ and }
\cR_{\omega g_k}^{(2)}
=C^2\cdot\left(\ul{\bA}^2\right)^\omega\cdot
\bB^2_k,
\end{align*}
respectively.
Since 
$R_1$ 
and 
$R_2$ 
are 
$(\KK^d)_{nc}-$evaluation equivalent, their restrictions to
$Nilp(\ul{Y})$ 
produce the same nc function and
therefore, by the uniqueness of the Taylor--Taylor coefficients, 
$\cR^{(1)}_{\emptyset}=\cR^{(2)}_{\emptyset}$
and
$\cR^{(1)}_{\omega g_k}=\cR^{(2)}_{\omega g_k}$ 
as multilinear mappings
for every 
$\omega\in\cG_d$ 
and 
$1\le k\le d$, 
i.e., 
$D^1=D^2$ 
and
\begin{align}
\label{eq:19Aug18a}
C^1\cdot
\left(\ul{\bA}^1\right)^\omega\cdot
\bB_k^1=C^2\cdot\left(\ul{\bA}^2\right)^\omega\cdot
\bB_k^2.
\end{align}

Define a mapping 
$\cT$ 
in the following way: for every 
$\omega\in
\cG_d,\,1\le k\le d,\,
\ul{X}=(X_1,\ldots,X_{|\omega|})
\in(\KK^{\sts})^d,\,X_{|\omega|+1}
\in\KK^{\sts}$
and 
$\ul{u}\in\KK^s$, 
let
\begin{align}
\label{eq:17Ap16a}
\cT\big((\ul{\bA}^1)^\omega
(\ul{X})
\bB^1_k
(X_{|\omega|+1})\ul{u}\big):=
(\ul{\bA}^2)^\omega(
\ul{X})
\bB^2_k
(X_{|\omega|+1})\ul{u}
\end{align}
and extend it by linearity.
We proceed by showing some  properties of 
$\cT$.

$\bullet$\textbf{ \uline{The domain of 
$\cT$ 
is 
$\KK^{L_1}$:}}
The domain of 
$\cT$
consists of all the vectors in 
$\KK^{L_1}$ 
which are in
$$\bigvee_{\omega\in\cG_d,\,\ul{X}\in(\KK^{\sts})^{|\omega|},\,
X_{|\omega|+1}\in\KK^{\sts},\,1\le k\le d,\,\ul{u}\in\KK^s}
\big(\ul{\bA}^1\big)^{\omega}
\big(\ul{X}\big)
\bB^1_{k}
\big(X_{|\omega|+1}\big)\ul{u}$$
and that is exactly 
$\cC_{\ul{\bA},\ul{\bB}}=\KK^{L_1}$, 
by the controllability of 
$\cR_1$ 
and hence of 
$(\ul{\bA}^1,\ul{\bB}^1)$.

$\bullet$\textbf{ \uline{$\cT$ is well-defined:}}
Let 
$\ul{w}_1,\ul{w}_2\in\KK^{L_1}$, 
then they can be written as
$$\ul{w}_1=\sum_{j=1}^{p_1}
\big(\ul{\bA}^1\big)^{\omega_{1,j}}
\big(X^{(1)}_{j,1}\ldots,
X^{(1)}_{j,|\omega_{1,j}|}\big)
\bB^1_{k_{1,j}}
\big(X^{(1)}_{j,|\omega_{1,j}|+1}\big)
\ul{u}_{1,j}$$
and
$$\ul{w}_2=\sum_{i=1}^{p_2}
\big(\ul{\bA}^1\big)^{\omega_{2,i}}
\big(X^{(2)}_{i,1},\ldots,
X^{(2)}_{i,|\omega_{2,i}|}
\big)
\bB^1_{k_{2,i}}
\big(X^{(2)}_{i,|\omega_{2,i}|+1}
\big)\ul{u}_{2,i},$$
where
$p_1,p_2\in\NN,\,
\omega_{1,j},\omega_{2,i}\in\cG_d,\, 
1\le k_{1,j},k_{2,i}\le d,\,
\ul{u}_{1,j},
\ul{u}_{2,i}\in\KK^s$ 
and 
$X^{(1)}_{j,\alpha},X^{(2)}_{i,\beta}
\in\KK^{\sts}$, 
for every 
$1\le \alpha\le |\omega_{1,j}|,
\,1\le \beta\le |\omega_{2,i}|,\,
1\le j\le p_1$ and
$1\le i\le p_2$.
Apply  
(\ref{eq:19Aug18a}), 
so for every 
$\omega\in\cG_d$, 
$$C^1\cdot\big(\ul{\bA}^1\big)^{\omega}
\cdot \big(
\ul{\bA}^1\big)^{\omega_{1,j}}
\cdot\bB_{k_{1,j}}^1  
=C^2\cdot\big(
\ul{\bA}^2\big)^{\omega}
\cdot \big(\ul{\bA}^2
\big)^{\omega_{1,j}}
\cdot\bB_{k_{1,j}}^2$$ 
and 
$$C^1\cdot\big(\ul{\bA}^1\big)^{\omega}
\cdot 
\big(\ul{\bA}^1\big)^{\omega_{2,i}}
\cdot\bB_{k_{2,i}}^1  
=C^2\cdot
\big(\ul{\bA}^2\big)
^{\omega}
\cdot \big(\ul{\bA}^2\big)^{\omega_{2,i}}
\cdot\bB_{k_{2,i}}^2,$$
which imply that for every 
$\ul{X}\in
(\KK^{\sts})^{
|\omega|}$, 
\begin{multline*} 
C^2\big(\ul{\bA}^2\big)^\omega(\ul{X})
\Big[\sum_{j=1}^{p_1}
\big(\ul{\bA}^2\big)^{\omega_{1,j}}
\big(X_{j,1}^{(1)},\ldots,
X_{j,|\omega_{1,j}|}^{(1)}
\big)
\bB^2_{k_{1,j}}
\big(X^{(1)}_{j,|\omega_{1,j}|+1}
\big)
\ul{u}_{1,j}\Big]
-C^2\big(\ul{\bA}^2\big)^\omega(\ul{X})
\\\Big[\sum_{i=1}^{p_2}
\left(\ul{\bA}^2\right)^{\omega_{2,i}}
\big(X_{i,1}^{(2)},\ldots,
X_{i,|\omega_{2,i}|}^{(2)}\big)
\bB^2_{k_{2,i}}
\big(X^{(2)}_{i,|\omega_{2,i}|+1}\big)
\ul{u}_{2,i}\Big]
=
C^1\big(\ul{\bA}^1\big)^\omega(\ul{X})
\Big[\sum_{j=1}^{p_1}
\big(\ul{\bA}^1\big)^{\omega_{1,j}}
\\\big(X_{j,1}^{(1)},\ldots,
X_{j,|\omega_{1,j}|}^{(1)}
\big)
\bB^1_{k_{1,j}}
\big(X^{(1)}_{j,|\omega_{1,j}|+1}
\big)
\ul{u}_{1,j}\Big]
-C^1\left(\ul{\bA}^1\right)^\omega(\ul{X})
\Big[\sum_{i=1}^{p_2}
\left(\ul{\bA}^1\right)^{\omega_{2,i}}
\big(X_{i,1}^{(2)},\ldots,\\
X_{i,|\omega_{2,i}|}^{(2)}\big)
\bB^1_{k_{2,i}}
\big(X^{(2)}_{i,|\omega_{2,i}|+1}\big)
\ul{u}_{2,i}\Big],
\end{multline*}
i.e., 
\begin{align}
\label{eq:19Aug18b}
C^2\big(\ul{\bA}^2
\big)^{\omega}(\ul{X})
\big(\cT(\ul{w}_1)-\cT(\ul{w}_2)\big)=
C^1\big(\ul{\bA}^1
\big)^{\omega}(\ul{X})(\ul{w}_1-\ul{w}_2).
\end{align}
Finally, if 
$\ul{w}_1=\ul{w}_2$, 
then 
$C^2\big(\ul{\bA}^2\big)^{\omega}(\ul{X})
(\cT(\ul{w}_1)-\cT(\ul{w}_2))=\ul{0}$
for all 
$\omega\in\cG_d$
and
$\ul{X}\in(\KK^{\sts})^{|\omega|}$, 
whereas
the observability of $\cR_2$
and hence of 
$(C^2,\ul{\bA}^2)$ 
guarantees that 
$\cT(\ul{w}_1)=\cT(\ul{w}_2)$.

$\bullet$\textbf{ \uline{$\cT$ is $1-1$:}}
If 
$\ul{w}_1,\ul{w}_2\in\KK^{L_1}$ 
such that 
$\cT(\ul{w}_1)=\cT(\ul{w}_2)$, 
it follows from 
(\ref{eq:19Aug18b}) 
that 
$C^1\big(\ul{\bA}^1\big)^{\omega}
(\ul{X})(\ul{w}_1-\ul{w}_2)=\ul{0}$
for every 
$\omega\in\cG_d$
and
$\ul{X}\in(\KK^{\sts})^{|\omega|}$, 
whereas the observability of 
$\cR_1$ 
and hence of 
$(C^1,\ul{\bA}^1)$ 
guarantees that 
$\ul{w}_1=\ul{w}_2$.

$\bullet$\textbf{ \uline{$\cT$ is onto $\KK^{L_2}$:}}
The tuple $(\ul{\bA}^2,\ul{\bB}^2)$
is controllable and hence every
$\ul{e}\in\KK^{L_2}$ 
can be written as
\begin{align*}
\ul{e}=\sum_{j=1}^p
\left(\ul{\bA}^2\right)^{\nu_j}
\big(\ul{X}^{(j)}
\big)
\bB^2_{k_j}
\big(
X_{j+1}
\big)
\ul{v}_j
=\cT\Big(\sum_{j=1}^p
\left(\ul{\bA}^1\right)^{\nu_j}
\big(
\ul{X}^{(j)}
\big)
\bB^1_{k_j}\big(
X_{j+1}\big)
\ul{v}_j\Big),
\end{align*}
where
$p\in\NN,\,
\nu_j\in\cG_d,\,
\ul{X}^{(j)}\in 
(\KK^{\sts})^{|\nu_j|},\,
X_{j+1}\in\KK^{\sts},\,
1\le k_j\le d$ 
and
$\ul{v}_j\in\KK^s$ 
for every 
$1\le j\le p$, 
i.e., 
$\KK^{L_2}\subseteq 
\Ima(\cT)$ and hence $\cT$ 
is onto
$\KK^{L_2}$.

Therefore, 
$\cT:\KK^{L_1}\rightarrow\KK^{L_2}$
is an isomorphism, 
$L_1=L_2:=L$,
the representative matrix 
$T:=[\cT]_{\cE_L}\in\KK^{L\times L}$ 
is invertible and 
$\cT(\ul{w})=T\ul{w}$ 
for all 
$\ul{w}\in\KK^L$. 

$\bullet\textbf{ \uline{The realizations 
$\cR_1$ and $\cR_2$ are similar:}}$
If 
$\ul{w}_1=\ul{0}$ 
and 
$\omega=\emptyset$, 
then 
(\ref{eq:19Aug18b}) 
implies
$$C^2T\ul{w}_2
=C^1\ul{w}_2,\,
\forall \ul{w}_2\in\KK^L
\Longrightarrow C^1=C^2T,$$ 
while applying 
(\ref{eq:19Aug18b}) 
again with 
$\ul{w}_1=0$ 
and using the observability of 
$(C^2,\ul{\bA}^2)$, lead to 
$$\bA_k^1(X)=T^{-1}\bA_k^2(X) T,
\,\forall X\in\KK^{\sts},1\le k\le d.$$ 
From the definition of 
$\cT$ 
we have 
$$T\bB_k^1(X)\ul{u}
=\cT\big(\bB_k^1(X)\ul{u}\big)
=\bB_k^2(X) \ul{u},\,
\forall \ul{u}\in\KK^L
\Longrightarrow \bB_k^1(X)=T^{-1}\bB_k^2(X)$$
for every $X\in\KK^{\sts}$ 
and 
$1\le k\le d$. Thus, we proved the realizations are similar.

$\bullet$ 
\textbf{\uline{$T$ is uniquely determined:}}
Let 
$T_2\in\KK^{L\times L}$ 
be such that
$\bB_k^1=T_2^{-1}\cdot\bB_k^2$
and
$\bA_k^1=T_2^{-1}\cdot\bA_k^2\cdot T_2 $ 
for every 
$1\le k\le d$, 
then it is easily seen that 
$T_2$ 
satisfies the relation in
(\ref{eq:17Ap16a})  
and thus the controllability of
$(\ul{\bA}^2,\ul{\bB}^2)$ 
implies that 
$T_2=T$.

$\bullet$ Moreover, 
\begin{multline*}
I_{Lm}-\sum_{k=1}^d
(X_k-I_m\otimes Y_k)
\bA_k^2=I_{Lm}-\sum_{k=1}^d
\big[(I_m\otimes T)
(X_k-I_m\otimes Y_k)
\bA_k^1(I_m\otimes T^{-1})\big]
\\=(I_m\otimes T)
\Big(
I_{Lm}-\sum_{k=1}^d
(X_k-I_m\otimes Y_k)
\bA_k^1 \Big)
(I_m\otimes T)^{-1}
\end{multline*}
for every
$m\in\NN$,
which implies that
$\ul{X}\in DOM_{sm}
(\cR_2)\iff \ul{X}\in DOM_{sm}(\cR_1),$
i.e., that
$DOM_{sm}(\cR_1)=
DOM_{sm}(\cR_2)$.
It is easily seen that using the relations in 
(\ref{eq:19Oct17c}), 
we have
$\cR_1(\ul{X})=\cR_2(\ul{X})$
for every 
$\ul{X}\in DOM_{sm}(\cR_1)$. 

$\bullet$ Finally, the relations in 
(\ref{eq:19Oct17c}) also imply that 
$C^2\otimes 1_{\cA}=(C^1\otimes 1_{\cA})(T^{-1}
\otimes 1_{\cA})$
and that for every
$\ul{\fA}=(\fA_1,\ldots,\fA_d)
\in(\cA^{\sts})^d$, we have
\begin{align*}
(\fA_k)(\bB_k^2)^{\cA}
=(T\otimes 1_{\cA}) 
(\fA_k)(\bB_k^1)^{\cA}
\text{ and }
(\fA_k)(\bA_k^2)^{\cA}=
(T\otimes 1_{\cA})(\fA_k)(\bA_k^1)^{\cA}
(T^{-1}\otimes 1_{\cA})
\end{align*}
for every $1\le k\le d$.
Therefore,
\begin{multline*} 
I_L\otimes 1_{\cA}-\sum_{k=1}^d 
(\fA_k-Y_k\otimes 1_{\cA})(\bA_k^2)^{\cA}=
I_L\otimes 1_{\cA}-\sum_{k=1}^d 
\big[(T\otimes 1_{\cA})(\fA_k
-Y_k\otimes 1_{\cA})(\bA_k^1)^{\cA}
\\(T^{-1}\otimes 1_{\cA})\big]
=(T\otimes 1_{\cA})\Big(
I_L\otimes 1_{\cA}-\sum_{k=1}^d 
(\fA_k-Y_k\otimes 1_{\cA})(\bA_k^1)^{\cA}
\Big)(T\otimes 1_{\cA})^{-1},
\end{multline*}
which implies that
$DOM^{\cA}(\cR_1)=DOM^{\cA}(\cR_2)$
and that
$\cR_1^{\cA}(\ul{\fA})=\cR_2^{\cA}(\ul{\fA})$ 
for every
$\ul{\fA}\in DOM^{\cA}(\cR_1)$.
\end{proof} 
\begin{remark}
\label{rem:14Apr19b}
One can obtain the equality in
(\ref{eq:19Aug18a})
and hence prove Theorem
\ref{thm:2May16b} 
without using Lemma 
\ref{lem:8Nov18a},
by evaluating nc rational expressions on generic matrices and then considering power series in commuting variables (the entries of the generic matrices).
\end{remark}

\subsection{Kalman decomposition}
\label{sec:Kal}
We proceed next to obtain a Kalman decomposition for nc Fornasini--Marchesini realizations centred at a matrix point 
$\ul{Y}\in(\KK^{\sts})^d$, 
which generalizes the Kalman decomposition for nc Fornasini--Marchesini realizations centred at a scalar point
(as in
\cite{BGM1}), 
where the later decomposition is a generalization of the classical Kalman decomposition (see
\cite{BGK,KAF}). 

This is the first place in our analysis where 
$\cA$ 
is no longer an arbitrary
unital 
$\KK-$algebra, but has
to be stably finite.
The stably finiteness is used to deduce the invertibility of one of the blocks in a block upper triangular matrix that is invertible (cf. Lemma 
\ref{lem:19Oct18a}).
\begin{theorem}[Kalman Decomposition]
\label{thm:2May16a}
Let 
\begin{align*}
\cR(\ul{X})= D+C
\Big(I_{L}-\sum_{k=1}^d 
\bA_k(X_k-Y_k)\Big)^{-1}
\sum_{k=1}^d \bB_k(X_k- Y_k)
\end{align*}
be a nc Fornasini--Marchesini realization centred at 
$\ul{Y}\in(\KK^{\sts})^d$. 
There exists a nc Fornasini--Marchesini realization 
$\wt{\cR}$ 
centred at 
$\ul{Y}$, 
that is controllable and observable, of dimension 
$\wt{L}=\dim(\cC_{\ul{\bA},\ul{\bB}})
-\dim(\cC_{\ul{\bA},
\ul{\bB}}\cap \obs{C}{\ul{\bA}}),$
such 
that
\begin{align*}
DOM_{sm}(\cR)\subseteq 
DOM_{sm}(\wt{\cR})
\text{ and }\cR(\ul{X})=\wt{\cR}(\ul{X}),\,
\forall\ul{X}\in DOM_{sm}(\cR)
\end{align*}
for every
$m\in\NN$,
and for any unital
\textbf{stably finite}
$\KK-$algebra 
$\cA$,
\begin{align*}
DOM^{\cA}(\cR)\subseteq 
DOM^{\cA}(\wt{\cR})
\text{ and }\cR^{\cA}(\ul{\fA})
=\wt{\cR}^{\cA}(\ul{\fA}),
\,\forall \ul{\fA}\in DOM^{\cA}(\cR).
\end{align*}
\end{theorem}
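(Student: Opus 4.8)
The plan is to mimic the classical Kalman decomposition argument, adapted to the matrix‐centre setting, in two stages: first remove the uncontrollable directions, then remove the unobservable directions from the controllable part. Concretely, I would choose a basis of $\KK^L$ adapted to the flag $\cC_{\ul\bA,\ul\bB}\cap\obs{C}{\ul\bA}\subseteq\cC_{\ul\bA,\ul\bB}$ and $\cC_{\ul\bA,\ul\bB}+\obs{C}{\ul\bA}$, i.e.\ write $\KK^L=\cV_1\oplus\cV_2\oplus\cV_3\oplus\cV_4$ where $\cV_1=\cC_{\ul\bA,\ul\bB}\cap\obs{C}{\ul\bA}$, $\cV_1\oplus\cV_2=\cC_{\ul\bA,\ul\bB}$, $\cV_1\oplus\cV_3=\obs{C}{\ul\bA}$. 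With $S\in\KK^{L\times L}$ the change of basis, conjugating the operators $\bA_k\mapsto S\bA_k(\cdot)S^{-1}$ and $\bB_k\mapsto S\bB_k(\cdot)$, $C\mapsto CS^{-1}$ (these conjugations are well-defined since $S$ does not depend on the matrix argument, exactly as in Theorem~\ref{thm:2May16b}), one checks that $\Ima\bB_k(X)\subseteq\cC_{\ul\bA,\ul\bB}$ for all $X$, that $\cC_{\ul\bA,\ul\bB}$ is invariant under each $\bA_k(X)$, and that $\obs{C}{\ul\bA}=\cap_{\omega,\ul X}\ker\big(C\ul\bA^\omega(\ul X)\big)$ is also invariant under each $\bA_k(X)$ and contained in $\ker C$. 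In the chosen basis this forces each $\bA_k(X)$, $\bB_k(X)$, $C$ to be block-triangular with the $\cV_2$-block giving a controllable and observable sub-realization $\wt\cR$ of dimension $\wt L=\dim\cC_{\ul\bA,\ul\bB}-\dim(\cC_{\ul\bA,\ul\bB}\cap\obs{C}{\ul\bA})=\dim\cV_2$; its $D$-matrix is unchanged. The fact that $\wt\cR$ realizes the same function over matrices of all sizes $sm\times sm$ follows from the block structure of $I_{Lm}-\sum_k(X_k-I_m\otimes Y_k)\bA_k$ combined with the invariances, after extending the operators to $sm\times sm$ matrices as in \autoref{sec:real}: the relevant matrix product telescopes so that only the $\cV_2$-block survives in $C(\cdots)^{-1}\sum\bB_k$.

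Next I would handle the domain inclusions, which is the substantive point. Over $\KK^{sm\times sm}$, writing $M=I_{Lm}-\sum_k(X_k-I_m\otimes Y_k)\bA_k$, the block-triangular form of the $\bA_k$'s makes $M$ block upper-triangular (w.r.t.\ a suitable ordering of $\cV_1,\dots,\cV_4$ tensored with $\KK^m$), with diagonal blocks $M_1,M_2,M_3,M_4$, where $M_2$ is exactly $I_{\wt Lm}-\sum_k(X_k-I_m\otimes Y_k)\wt\bA_k$. Hence $\ul X\in DOM_{sm}(\cR)$ means $M$ invertible, which forces $M_2$ invertible (the diagonal blocks of an invertible triangular matrix over a field are invertible), i.e.\ $\ul X\in DOM_{sm}(\wt\cR)$; and then a direct Schur-type computation with the block-triangular $M$ shows $\cR(\ul X)=\wt\cR(\ul X)$. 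For the $\cA$-version with $\cA$ stably finite, the same block-triangular structure holds for $I_L\otimes 1_\cA-\sum_k(\fA_k-Y_k\otimes 1_\cA)\bA_k^\cA$ over $\cA^{L\times L}$, and here is where stable finiteness enters: by Lemma~\ref{lem:19Oct18a}, invertibility of a block upper-triangular matrix over a stably finite algebra forces invertibility of each diagonal block, hence $DOM^\cA(\cR)\subseteq DOM^\cA(\wt\cR)$, after which $\cR^\cA(\ul\fA)=\wt\cR^\cA(\ul\fA)$ again follows by the same block computation.

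The main obstacle I anticipate is not the linear algebra of the decomposition but verifying cleanly that $\cC_{\ul\bA,\ul\bB}$ and $\obs{C}{\ul\bA}$ are simultaneously invariant under \emph{all} the operators $\bA_k(X)$, $X\in\KK^{\sts}$ — unlike the scalar case where one has finitely many fixed matrices $A_k$, here each $\bA_k$ is a whole family of matrices parametrized by $X$, so "invariance" must be argued uniformly in $X$. This is handled by Proposition~\ref{prop:28Sep18b} and the basis reformulation (replacing arbitrary $X$ by the finite set $\cE_s$): controllability/observability can be tested on the standard basis $\cE_s$, and $\cC_{\ul\bA,\ul\bB}=\bigvee_{\omega,\,\ul Z\in\cE_s^{|\omega|+1},\,k}\Ima\big(\ul\bA^\omega(\ul Z)\bB_k\big)$ is visibly $\bA_j(E_{pq})$-invariant for every generator letter $E_{pq}\in\cE_s$ (prepending a letter to $\omega$), and by linearity invariant under every $\bA_j(X)$; dually $\obs{C}{\ul\bA}$ is the joint kernel of all $C\ul\bA^\omega(\ul Z)$, $\ul Z\in\cE_s$, which is $\bA_j(E_{pq})$-invariant (appending a letter) and contained in $\ker C$ (take $\omega=\emptyset$). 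Once this uniform invariance is in hand, the block-triangularization and both domain arguments are routine; the write-up will spend most of its length on bookkeeping of the four blocks and the telescoping product, rather than on any genuinely new idea.
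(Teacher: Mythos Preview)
Your proposal is correct and follows essentially the same approach as the paper's proof: the same four-block decomposition of $\KK^L$ adapted to $\cC_{\ul\bA,\ul\bB}$ and $\obs{C}{\ul\bA}$, the same invariance arguments yielding a block-triangular pencil, extraction of the $\cV_2$-block as $\wt\cR$, and the same appeal to Lemma~\ref{lem:19Oct18a} for the stably finite case. The only items you gloss over that the paper spells out are the verification that $\cV_1+\cV_2+\cV_3$ is genuinely a direct sum and the explicit dimension-counting arguments showing that the compressed realization $\wt\cR$ is itself controllable and observable, but these are routine once the block structure is in hand.
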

As the proof shows (see
(\ref{eq:13Ap16b}) 
below), 
$\wt{\cR}$
is obtained from 
$\cR$ 
analogously to the classical case,
by restricting to a joint invariant 
subspace of the operators
$\bA_1,\ldots,\bA_d$ 
and then compressing to a co-invariant subspace.
\begin{proof}
Using the controllability and 
un-observability subspaces of 
$\KK^{L}$, 
which correspond to 
$(\ul{\bA},\ul{\bB})$
and
$(C,\ul{\bA})$,
define
$\cC:=
\cC_{\ul{\bA},\ul{\bB}},
\cN\cO:=
\obs{C}{\ul{\bA}}$,
\begin{align}
\label{eq:13Ap16a}
\cH_1:=\cN\cO\cap\cC,
\end{align}
$\cH_2$
a complementary subspace 
of
$\cH_1$ 
in
$\cC$ 
and 
$\cH_3$
a complementary subspace 
of
$\cH_1$
in 
$\cN\cO$, thus
\begin{align}
\label{eq:13Ap16c}
\cC=\cH_2\dotplus\cH_1
\text{ and }
\cN\cO=\cH_1\dotplus\cH_3.
\end{align}
If 
$h_1+h_2+h_3=0$
where
$h_1\in\cH_1,h_2\in\cH_2$
and
$h_3\in\cH_3$,
then 
$h_1+h_2=-h_3\in\cC\cap
\cN\cO\cap\cH_3=\cH_1\cap\cH_3=\{\ul{0}\}$, 
which implies that
$h_1+h_2=-h_3=\ul{0}$ and hence 
$h_1=h_2=h_3=\ul{0}$.
Therefore, the sum 
$\cH_1+\cH_2+\cH_3$
is a direct sum; let 
$\cH_4$ 
be a complementary subspace 
of
$\cH_1+\cH_2+\cH_3$ 
in
$\KK^L$, thus
\begin{align}
\label{eq:13Ap16b}
\KK^{L}=\cH_2
\dotplus\cH_1\dotplus
\cH_4\dotplus\cH_3.
\end{align}
Notice that we are interested in
$\cH_2$
as it is a subspace of a controllable (invariant) subspace
and a complementary subspace of an un-observable subspace.  
Define the dimensions
$L_j:=\dim(\cH_j)$
for
$1\le j\le 4,$ 
so 
$L=L_1+\ldots+L_4$.
From Proposition
\ref{prop:28Sep18b}
we have 
$\cN\cO^{(m)}=
\cN\cO\otimes\KK^m$ 
and
$\cC^{(m)}=\cC\otimes\KK^m$ 
for every 
$m\in\NN$, 
hence
$$\cH_1^{(m)}:=
\cN\cO^{(m)}\cap\cC^{(m)}=
(\cN\cO\otimes\KK^m)
\cap (\cC\otimes\KK^m)
=\cH_1\otimes\KK^m.$$
Define 
$\cH_2^{(m)}:=\cH_2\otimes\KK^m,
\cH_3^{(m)}:=\cH_3\otimes\KK^m$
and
$\cH_4^{(m)}:=\cH_4\otimes\KK^m$, 
thus
\begin{align*}
\cH_2^{(m)}\dotplus\cH_1^{(m)}
=\cC^{(m)},
\cH_1^{(m)}\dotplus\cH_3^{(m)}=\cN\cO^{(m)}
\end{align*}
and 
\begin{align}
\label{eq:18Oct18b}
\KK^{Lm}=\cH_2^{(m)}\dotplus
\cH_1^{(m)}\dotplus\cH_4^{(m)}\dotplus
\cH_3^{(m)}.
\end{align}

$\bullet$ With respect to the decomposition
(\ref{eq:13Ap16b})
of 
$\KK^{L}$,
there exists
$P\in\KK^{L\times L}$ 
invertible such that
for every 
$X\in\KK^{sm\times sm}$ 
and 
$1\le k\le d$,
the matrices 
$(X)\bA_k\in\KK^{Lm\times Lm},\,
(X)\bB_k\in\KK^{Lm\times sm}$ 
and
$I_m\otimes C\in\KK^{sm\times Lm}$
can be decomposed, w.r.t 
(\ref{eq:18Oct18b}), as 
\begin{multline*}
\big(P^{(m)}\big)^{-1}(X)\bA_kP^{(m)}=
(X)\begin{bmatrix}
\bA_k^{1,1}&\bA_k^{1,2}&
\bA_k^{1,3}&\bA_k^{1,4}\\
\bA_k^{2,1}&\bA_k^{2,2}&
\bA_k^{2,3}&\bA_k^{2,4}\\
\bA_k^{3,1}&\bA_k^{3,2}&
\bA_k^{3,3}&\bA_k^{3,4}\\
\bA_k^{4,1}&\bA_k^{4,2}&
\bA_k^{4,3}&\bA_k^{4,4}\\
\end{bmatrix},\, 
\big(P^{(m)}\big)^{-1}(X)\bB_k=
(X)\begin{bmatrix}
\bB_k^1\\
\bB_k^2\\
\bB_k^3\\
\bB_k^4\\
\end{bmatrix}\\
\text{and }
(I_m\otimes C)P^{(m)}
=\begin{bmatrix}
C^1&C^2&C^3&C^4\\
\end{bmatrix},
\end{multline*}
where 
$P^{(m)}:=I_m\otimes P$.
If
$\ul{u}_1\in\cH_1^{(m)},
\ul{u}_2\in\cH_2^{(m)},
\ul{u}_3\in\cH_3^{(m)}$
and
$\ul{v}\in\KK^{sm}$,
then  
\begin{multline*}
(X)\bA_k\ul{u}_1\in\cN\cO^{(m)}
\cap\cC^{(m)}=\cH_1^{(m)},\,
(X)\bA_k\ul{u}_2\in\cC^{(m)},\,
(X)\bA_k\ul{u}_3\in\cN\cO^{(m)},\\ 
(X)\bB_k\ul{v}\in\cC^{(m)}
\text{ and }(I_m\otimes C)\ul{u}_1=\ul{0},
\end{multline*}
which imply that 
\begin{equation*}
(X)\bA_k^{1,2}
,(X)\bA_k^{3,2}
,(X)\bA_k^{4,2},
(X)\bA_k^{3,1}
,(X)\bA_k^{4,1},
(X)\bA_k^{1,4},
(X)\bA_k^{3,4}, 
(X)\bB_k^3,(X)\bB_k^4, 
C^2,C^4
\end{equation*}
all vanish.
Therefore, we get 
\begin{multline*}
\big(P^{(m)}\big)^{-1}(X)\bA_kP^{(m)}=
(X)\begin{bmatrix}
\bA_k^{1,1}&0&\bA_k^{1,3}&0\\
\bA_k^{2,1}&\bA_k^{2,2}&
\bA_k^{2,3}&\bA_k^{2,4}\\
0&0&\bA_k^{3,3}&0\\
0&0&\bA_k^{4,3}&\bA_k^{4,4}\\
\end{bmatrix},\, 
\big(P^{(m)}\big)^{-1}(X)\bB_k=(X)
\begin{bmatrix}
\bB_k^1\\
\bB_k^2\\
0\\
0\\
\end{bmatrix}\\
\text{and }
(I_m\otimes C)P^{(m)}=
\begin{bmatrix}
C^1&0&C^3&0\\
\end{bmatrix},
\end{multline*}
hence for every 
$\ul{X}\in DOM_{sm}(\cR)$,
\begin{multline*}
\cR(\ul{X})=I_m\otimes D+
(I_m\otimes C)P^{(m)}
\Big( 
I_{Lm}-\sum_{k=1}^d
\big[\big(P^{(m)}\big)^{-1}(X_k-I_m\otimes Y_k)
\bA_kP^{(m)}\big]
\Big)
^{-1}
\\\sum_{k=1}^d 
\big[\big(P^{(m)}\big)^{-1}
(X_k-I_m\otimes Y_k)\bB_k\big]
=I_m\otimes D+ 
\begin{bmatrix}
C^1&0&C^3&0\\
\end{bmatrix}
\begin{bmatrix}
\lam_{1,1}&0&\lam_{1,3}&0\\
\lam_{2,1}&\lam_{2,2}&
\lam_{2,3}&\lam_{2,4}\\
0&0&\lam_{3,3}&0\\
0&0&\lam_{4,3}&\lam_{4,4}\\
\end{bmatrix}^{-1}
\\\sum_{k=1}^d
(X_k-I_m\otimes Y_k)
\begin{bmatrix} 
\bB_k^1\\
\bB_k^2\\0\\0\\ 
\end{bmatrix},
\end{multline*}
where 
\begin{multline*}
\Lambda_X:=\begin{bmatrix}
\lam_{11}&0&\lam_{13}&0\\
\lam_{21}&\lam_{22}&
\lam_{23}&\lam_{24}\\
0&0&\lam_{33}&0\\
0&0&\lam_{43}&\lam_{44}\\
\end{bmatrix}
=I_{Lm}-\sum_{k=1}^d
(X_k-I_m\otimes Y_k)
\begin{bmatrix}
\bA_k^{1,1}&0&
\bA_k^{1,3}&0\\
\bA_k^{2,1}&
\bA_k^{2,2}&
\bA_k^{2,3}&
\bA_k^{2,4}\\
0&0&
\bA_k^{3,3}&0\\
0&0&
\bA_k^{4,3}&\bA_k^{4,4}\\
\end{bmatrix}
\end{multline*}
is invertible. Thus
\begin{align}
\label{eq:14Apr19c}
\det(\lam_{11})=
\det\Big(I_{\wt{L}m}-\sum_{k=1}^d
(X_k-I_m\otimes Y_k)\bA^{1,1}_k\Big)\ne0,
\end{align} 
where 
$\wt{L}=L_2=\dim(\cC)-
\dim(\cN\cO\cap\cC)$
and the inverse of 
$\Lambda_X$ 
is given by
\begin{align*}\Lambda_X^{-1}=
\begin{bmatrix}
\lam_{11}^{-1}&0&-\lam_{11}^{-1}
\lam_{13}\lam_{33}^{-1}&0\\
-\lam_{22}^{-1}\lam_{21}\lam_{11}^{-1}
&\lam_{22}^{-1}&\lam_{22}^{-1}
(\lam_{24}\lam_{44}^{-1}
\lam_{43}-\lam_{23}+\lam_{21}\lam_{11}^{-1}
\lam_{13})\lam_{33}^{-1}&-\lam_{22}^{-1}
\lam_{24}\lam_{44}^{-1}\\
0&0&\lam_{33}^{-1}&0\\
0&0&-\lam_{44}^{-1}
\lam_{43}\lam_{33}^{-1}&\lam_{44}^{-1}\\
\end{bmatrix}.
\end{align*}
Therefore, for every 
$\ul{X}\in DOM_{sm}(\cR)$, we have
\begin{multline*}
\cR(\ul{X})=\wt{\cR}(\ul{X}):= 
I_m\otimes D+(I_m\otimes C^1)
\Big(I_{m\wt{L}}-
\sum_{k=1}^d 
(X_k-I_m\otimes Y_k)\bA_k^{1,1}
\Big)^{-1}
\sum_{k=1}^d 
(X_k-I_m\otimes Y_k)\bB_k^1 
\end{multline*} 
where
$\wt{\cR}$ 
is the nc Fornasini--Marchesini realization 
described by
$\big(\wt{L},D,C^1,\ul{\bA}^{1,1},
\ul{\bB}^1\big)$ 
and centred at
$\ul{Y}$, 
and 
(\ref{eq:14Apr19c}) 
implies that
$\ul{X}\in DOM_{sm}(\wt{\cR})$.

$\bullet$ 
Next, let
$\ul{\fA}=(\fA_1,\ldots,\fA_d)\in 
(\cA^{\sts})^d$ 
and write
$\fA_k=\sum_{i,j=1}^s
E_{ij}\otimes \fa^{(k)}_{ij}$, where $E_{ij}\in\cE_s$ and
$\fa_{ij}^{(k)}\in\cA$,
then
\begin{multline*}
I_L\otimes 1_{\cA}-\sum_{k=1}^d
(\fA_k-Y_k\otimes 1_{\cA})\bA_k^{\cA}
=I_L\otimes 1_{\cA}
-\sum_{k=1}^d 
\Big[\sum_{i,j=1}^s 
\bA_k(E_{ij})\otimes\fa^{(k)}_{ij}
-\bA_k(Y_k)\otimes 1_{\cA}\Big]
\\=I_L\otimes 1_{\cA}
-\sum_{k=1}^d 
\sum_{i,j=1}^s \Big[
P\begin{bmatrix}
\bA_k^{1,1}&0&\bA_k^{1,3}&0\\
\bA_k^{2,1}&\bA_k^{2,2}&
\bA_k^{2,3}&\bA_k^{2,4}\\
0&0&\bA_k^{3,3}&0\\
0&0&\bA_k^{4,3}&\bA_k^{4,4}\\
\end{bmatrix}(E_{ij})
P^{-1}\Big]\otimes \fa^{(k)}_{ij}
-\sum_{k=1}^d
\Big[P\\
\begin{bmatrix}
\bA_k^{1,1}&0&\bA_k^{1,3}&0\\
\bA_k^{2,1}&\bA_k^{2,2}&
\bA_k^{2,3}&\bA_k^{2,4}\\
0&0&\bA_k^{3,3}&0\\
0&0&\bA_k^{4,3}&\bA_k^{4,4}\\
\end{bmatrix}
(Y_k)P^{-1}\Big]\otimes 1_{\cA}
=(P\otimes 1_{\cA})
\begin{bmatrix}
\lam_{1,\cA}&0&*&0\\
*&*&*&*\\
0&0&*&0\\
0&0&*&*\\\end{bmatrix}
(P\otimes 1_{\cA})^{-1},
\end{multline*}
where
$$\lam_{1,\cA}:=I_{\wt{L}}\otimes 1_{\cA}-\sum_{k=1}^d
\big[\sum_{i,j=1}^s
\bA_k^{1,1}(E_{ij})\otimes
\fa^{(k)}_{ij}-\bA_k^{1,1}(Y_k)
\otimes 1_{\cA}\big].$$
Therefore, if 
$\ul{\fA}\in DOM^{\cA}(\cR)$, 
then 
\begin{align*}
(P\otimes 1_{\cA})^{-1}\Big(I_L\otimes 1_{\cA}-\sum_{k=1}^d
(\fA_k-Y_k\otimes 1_{\cA})\bA_k^{\cA}\Big)(P\otimes 1_{\cA})
=
\begin{bmatrix}
\lam_{1,\cA}&0&*&0\\
*&*&*&*\\
0&0&*&0\\
0&0&*&*\\\end{bmatrix}
\end{align*}
is invertible in
$\cA^{L\times L}$, 
while applying Lemma 
\ref{lem:19Oct18a} 
to obtain that 
\begin{equation*}
\lam_{1,\cA}
=I_{\wt{L}}\otimes 1_{\cA}
-\sum_{k=1}^d \
(\fA_k-Y_k\otimes 1_{\cA})
\big(\bA_k^{1,1}\big)^{\cA}
\end{equation*}
is invertible in
$\cA^{\wt{L}\times\wt{L}}$, 
i.e., that 
$\ul{\fA}\in DOM^{\cA}(\wt{\cR})$.
Moreover, a careful similar computation shows that if 
$\ul{\fA}\in DOM^{\cA}(\cR)$, 
then 
$\cR^{\cA}(\ul{\fA})
=\wt{\cR}^{\cA}(\ul{\fA})$.

$\bullet$
For every 
$\ul{u}\in\KK^s,\, 
1\le k\le d$,
a word
$\omega=g_{i_1}\ldots g_{i_\ell}\in\cG_d$
of length
$\ell\in\NN$
and
$X_1,\ldots,X_{\ell+1}\in\KK^{\sts}$, 
we have
\begin{align*}
\ul{\bA}^{\omega}(X_1,\ldots,X_\ell)
\bB_k(X_{\ell+1})\ul{u}
&=P(P^{-1}\bA_{i_1}(X)P)
\cdots(P^{-1}
\bA_{i_\ell}(X_\ell)P)P^{-1}\bB_k(X_{\ell+1})\ul{u}
\\&=P\begin{bmatrix}
\big(\ul{\bA}^{1,1}\big)^{\omega}(X_1,\ldots,X_\ell)
\bB^1_k(X_{\ell+1})\ul{u}\\
*\\0\\0\\
\end{bmatrix}.
\end{align*}
Thus, for every
$\ul{v}\in\cC$, 
we have
$P^{-1}\ul{v}\in 
\wt{\cC}\oplus\KK^{L_1}
\oplus \{\ul{0}\}\oplus\{\ul{0}\}
\subseteq\KK^L$
and hence 
$\dim(\cC)\le 
\dim(\wt{\cC})+L_1$.
As 
$\dim(\cC)=L_1+L_2$, 
we get that 
$L_2\le \dim(\wt{\cC})$,
while 
$\wt{\cC}\subseteq \KK^{L_2}$ 
yields that 
$\wt{\cC}=\KK^{L_2}=\KK^{\wt{L}}$, 
i.e., the realization 
$\wt{\cR}$ 
is controllable.

$\bullet$ 
For every
$\ul{w}_1\in\KK^{L_1},\,
\ul{w}_2\in\KK^{L_2},\,
\ul{w}_3\in\KK^{L_3}$,
a word
$\omega=g_{i_1}\ldots g_{i_\ell}\in\cG_d$
of length 
$\ell\in\NN$ 
and
$X_1,\ldots,X_{\ell}\in\KK^{\sts},$ 
we have
\begin{multline*}
C\ul{\bA}^{\omega}
(X_1,\ldots,X_\ell)P
\begin{bmatrix}
\ul{w}_2^T&
\ul{w}_1^T&
0&
\ul{w}_3^T\\
\end{bmatrix}^T
=CP(P^{-1}\bA_{i_1}(X_1)P)\cdots 
(P^{-1}\bA_{i_\ell}(X_\ell)P)\\
\begin{bmatrix}
\ul{w}_2^T&
\ul{w}_1^T&
0&
\ul{w}_3^T\\
\end{bmatrix}^T
=CP
\begin{bmatrix}
\bA_{i_1}^{1,1}(X_1)\cdots
\bA_{i_\ell}^{1,1}(X_\ell)\ul{w}_2\\
*\\
0\\
*\\
\end{bmatrix}=C^1
(\ul{\bA}^{1,1})^{\omega}(X_1,\ldots,X_\ell)
\ul{w}_2,
\end{multline*}
therefore 
$\ul{w}_2\in\wt{\cN\cO}:=
\obs{C^1}{\ul{\bA}^{1,1}}$
implies that 
$P\begin{bmatrix}
\ul{w}_2^T&
\ul{w}_1^T&
0&
\ul{w}_3^T\\
\end{bmatrix}^T\in 
\cN\cO.$
As 
$P$ 
is invertible,
$\dim(\cN\cO)\ge 
\dim
(\wt{\cN\cO})+
L_1
+L_3=
\dim(
\wt{\cN\cO})
+\dim(\cN\cO)$
which guarantees that 
$\wt{\cN\cO}=\{\ul{0}\}$,
i.e., that 
$\wt{\cR}$ is observable.
\end{proof}
As
a corollary we get that a nc Fornasini--Marchesini 
realization (of a nc rational expression) is 
minimal if and only if it is both controllable and 
observable:
\begin{theorem}
\label{thm:9May19a}
Let 
$R$
be a nc rational expression in 
$x_1,\ldots,x_d$ 
over 
$\KK$ 
and
$\cR$
be a nc Fornasini--Marchesini realization of
$R$
centred at 
$\ul{Y}\in(\KK^{\sts})^d$.
Then 
$\cR$ 
is minimal if and only if 
$\cR$ is controllable and observable. 
\end{theorem}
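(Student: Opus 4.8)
The plan is to derive the equivalence from two results already in hand: the Kalman decomposition (Theorem~\ref{thm:2May16a}) and the rigidity of controllable-and-observable realizations (Theorem~\ref{thm:2May16b}).

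First I would prove that controllability and observability imply minimality. Suppose $\cR$, described by $(L,D,C,\ul{\bA},\ul{\bB})$ and centred at $\ul{Y}$, is controllable and observable, and let $\cR'$ be \emph{any} nc Fornasini--Marchesini realization of $R$ centred at $\ul{Y}$, of dimension $L'$. Apply Theorem~\ref{thm:2May16a} to $\cR'$ to get a controllable and observable realization $\wt{\cR'}$ of $R$ centred at $\ul{Y}$, of dimension $\wt{L'}=\dim(\cC_{\ul{\bA}',\ul{\bB}'})-\dim(\cC_{\ul{\bA}',\ul{\bB}'}\cap\obs{C'}{\ul{\bA}'})$, so in particular $\wt{L'}\le\dim(\cC_{\ul{\bA}',\ul{\bB}'})\le L'$. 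One checks that $\wt{\cR'}$ is a realization of $R$ in the sense of Definition~\ref{def:25Sep18a}: since $dom_{sm}(R)\subseteq DOM_{sm}(\cR')\subseteq DOM_{sm}(\wt{\cR'})$ and $\wt{\cR'}$ agrees with $\cR'$ --- hence with $R$ --- on $DOM_{sm}(\cR')$, both requirements hold (and likewise over any stably finite $\KK$-algebra). Now $\cR$ and $\wt{\cR'}$ are controllable and observable realizations, centred at $\ul{Y}$, of the (trivially $(\KK^d)_{nc}$-evaluation equivalent) expressions $R$ and $R$; Theorem~\ref{thm:2May16b} gives $L=\wt{L'}\le L'$. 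Since $\cR'$ was arbitrary, $\cR$ is minimal.

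For the converse, suppose $\cR$ of dimension $L$ is minimal, and apply Theorem~\ref{thm:2May16a} to $\cR$ to obtain a controllable and observable realization $\wt{\cR}$ of $R$, centred at $\ul{Y}$, of dimension $\wt{L}=\dim(\cC_{\ul{\bA},\ul{\bB}})-\dim(\cC_{\ul{\bA},\ul{\bB}}\cap\obs{C}{\ul{\bA}})$. Minimality of $\cR$ forces $L\le\wt{L}$, while trivially $\wt{L}\le\dim(\cC_{\ul{\bA},\ul{\bB}})\le L$. Hence equality holds throughout: $\dim(\cC_{\ul{\bA},\ul{\bB}})=L$, i.e.\ $\cC_{\ul{\bA},\ul{\bB}}=\KK^L$, so $(\ul{\bA},\ul{\bB})$ is controllable; and $\dim(\cC_{\ul{\bA},\ul{\bB}}\cap\obs{C}{\ul{\bA}})=0$, which together with $\cC_{\ul{\bA},\ul{\bB}}=\KK^L$ gives $\obs{C}{\ul{\bA}}=\cC_{\ul{\bA},\ul{\bB}}\cap\obs{C}{\ul{\bA}}=\{\ul{0}\}$, so $(C,\ul{\bA})$ is observable.

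I do not expect a genuine obstacle here: all the substance already sits in Theorems~\ref{thm:2May16a} and~\ref{thm:2May16b}. The only points needing care are bookkeeping --- confirming that a Kalman-reduced realization still qualifies as a realization of the \emph{same} expression $R$ (so that Theorem~\ref{thm:2May16b} applies with the self-equivalence $R\sim R$), and tracking the elementary chain $\wt{L}\le\dim(\cC_{\ul{\bA},\ul{\bB}})\le L$ that pins down controllability and observability simultaneously. In particular, by Remark~\ref{rem:14Apr19a} the conclusion is unchanged if $R$ is replaced by any $(\KK^d)_{nc}$-evaluation equivalent expression.
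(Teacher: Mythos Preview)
Your proof is correct and follows essentially the same approach as the paper: both rely on the Kalman decomposition (Theorem~\ref{thm:2May16a}) and the similarity of controllable and observable realizations (Theorem~\ref{thm:2May16b}). The only cosmetic difference is order and packaging --- the paper proves ``minimal $\Rightarrow$ controllable and observable'' first (exactly as in your second paragraph) and then uses that direction to handle the converse by picking a minimal $\cR'$ (which is then controllable and observable by the first part), whereas you make the converse self-contained by applying Kalman to the arbitrary $\cR'$ directly; both arguments are equivalent in content.
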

\begin{proof}
If 
$\cR$ 
is minimal, then using Theorem  
\ref{thm:2May16a} we must have
$$L\le \wt{L}=
\dim(\cC^{(1)})-
\dim(
\cC^{(1)}\cap\cN\cO^{(1)})\le L,$$
which implies that 
$\wt{L}=L,\,\dim(\cC^{(1)})=L$ 
and 
$\dim(\cN\cO^{(1)})=0$, 
i.e., that 
$\cR$ 
is controllable and observable.

On the other hand, let 
$\cR$
be both controllable and observable, and suppose 
$\cR^\prime$ 
is a minimal nc Fornasini--Marchesini realization of 
$R$ centred at 
$\ul{Y}$ 
of dimension 
$L^\prime$.
Thus, by the first part of the theorem,  
$\cR^\prime$ 
is controllable and observable, as well as 
$\cR$, 
so Theorem 
\ref{thm:2May16b}
implies that 
$L=L^\prime$  
and hence 
$\cR$ 
is minimal.
\end{proof}
Minimal nc Fornasini--Marchesini realizations 
are playing a central role in the analysis of the domains; one of the reasons is that they admit the maximal domain among all nc Fornasini--Marchesini realizations--- 
of a given nc rational expression--- which are centred at the same point:
\begin{lemma}
\label{lem:27Jan18a} 
Let 
$\cR_1,\cR_2$ 
be two nc Fornasini--Marchesini realizations
of a nc rational expression 
$R$, 
both centred at 
$\ul{Y}\in(\KK^{\sts})^d$. 
If 
$\cR_2$ 
is minimal, then
\begin{align}
\label{eq:27Jan18a}
DOM_{sm}(\cR_1)
\subseteq DOM_{sm}(\cR_2)
\text{ and }
\cR_1(\ul{X})=\cR_2(\ul{X}),
\forall
\ul{X}\in DOM_{sm}(\cR_1)
\end{align}
for every $m\in\NN$,
and for any 
unital stably finite $\KK-$algebra
$\cA$:
\begin{align*}
DOM^{\cA}(\cR_1)\subseteq
DOM^{\cA}(\cR_2)
\text{ and }\cR_1^{\cA}
(\ul{\fA})=
\cR_2^{\cA}(\ul{\fA}),\,
\forall \ul{\fA}\in DOM^{\cA}(\cR_1).
\end{align*}
\end{lemma}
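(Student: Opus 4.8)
The plan is to reduce everything to the rigidity statement for controllable and observable realizations (Theorem \ref{thm:2May16b}) by first running $\cR_1$ through the Kalman decomposition. First I would apply Theorem \ref{thm:2May16a} to $\cR_1$ to produce a controllable and observable nc Fornasini--Marchesini realization $\wt{\cR}_1$, again centred at $\ul{Y}$, with
\[
DOM_{sm}(\cR_1)\subseteq DOM_{sm}(\wt{\cR}_1),\qquad \cR_1(\ul{X})=\wt{\cR}_1(\ul{X})\ \ \forall\,\ul{X}\in DOM_{sm}(\cR_1),
\]
for every $m\in\NN$, and likewise $DOM^{\cA}(\cR_1)\subseteq DOM^{\cA}(\wt{\cR}_1)$ with $\cR_1^{\cA}=\wt{\cR}_1^{\cA}$ on $DOM^{\cA}(\cR_1)$ for every unital \emph{stably finite} $\KK$-algebra $\cA$. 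This is exactly the point where the stable finiteness hypothesis in the statement is needed: it is used only to invoke the $\cA$-part of Theorem \ref{thm:2May16a} (which in turn rests on Lemma \ref{lem:19Oct18a}).

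The second step is to verify that $\wt{\cR}_1$ is itself a realization of $R$ in the sense of Definition \ref{def:25Sep18a}. Since $\cR_1$ is a realization of $R$ we have $dom_{sm}(R)\subseteq DOM_{sm}(\cR_1)$ and $R=\cR_1$ on $dom_{sm}(R)$; combining with the inclusion and value-agreement from the first step gives $dom_{sm}(R)\subseteq DOM_{sm}(\wt{\cR}_1)$ and $R=\wt{\cR}_1$ on $dom_{sm}(R)$, so $\wt{\cR}_1$ is a controllable and observable realization of $R$ centred at $\ul{Y}$. In parallel, since $\cR_2$ is minimal, Theorem \ref{thm:9May19a} shows $\cR_2$ is controllable and observable.

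Now I would apply Theorem \ref{thm:2May16b} with $R_1=R_2=R$ (any nc rational expression is trivially $(\KK^d)_{nc}-$evaluation equivalent to itself) and with the realizations $\wt{\cR}_1$ and $\cR_2$: both are controllable and observable and centred at the common point $\ul{Y}$, so they are (uniquely) similar and in particular
\[
DOM_{sm}(\wt{\cR}_1)=DOM_{sm}(\cR_2),\qquad \wt{\cR}_1(\ul{X})=\cR_2(\ul{X})\ \ \forall\,\ul{X}\in DOM_{sm}(\wt{\cR}_1),
\]
for every $m$, together with $DOM^{\cA}(\wt{\cR}_1)=DOM^{\cA}(\cR_2)$ and $\wt{\cR}_1^{\cA}=\cR_2^{\cA}$ on $DOM^{\cA}(\wt{\cR}_1)$ for every unital $\KK$-algebra $\cA$. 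Chaining the two steps yields $DOM_{sm}(\cR_1)\subseteq DOM_{sm}(\wt{\cR}_1)=DOM_{sm}(\cR_2)$, and for $\ul{X}\in DOM_{sm}(\cR_1)$ one gets $\cR_1(\ul{X})=\wt{\cR}_1(\ul{X})=\cR_2(\ul{X})$; the $DOM^{\cA}$ statement follows identically, using (again only here) that $\cA$ is stably finite so that the first step applies. This establishes (\ref{eq:27Jan18a}) and its algebra counterpart.

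I do not expect a genuinely hard obstacle: the lemma is essentially a formal corollary of the Kalman decomposition (Theorem \ref{thm:2May16a}) combined with the similarity and minimality theorems (Theorems \ref{thm:2May16b} and \ref{thm:9May19a}). The only point requiring care is the bookkeeping in the second step — confirming that $\wt{\cR}_1$ genuinely satisfies the defining conditions of "realization of $R$" so that Theorem \ref{thm:2May16b} is legitimately applicable, and keeping straight that for that application one needs only the (trivial) self-equivalence of $R$ and controllability/observability, not minimality of $\wt{\cR}_1$.
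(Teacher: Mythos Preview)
Your proposal is correct and follows essentially the same approach as the paper: apply the Kalman decomposition (Theorem \ref{thm:2May16a}) to $\cR_1$ to obtain a controllable and observable realization $\wt{\cR}_1$ of $R$ centred at $\ul{Y}$, then invoke Theorem \ref{thm:2May16b} to identify $\wt{\cR}_1$ with $\cR_2$, and chain the inclusions. The only cosmetic difference is that the paper phrases the second step as ``both $\wt{\cR}_1$ and $\cR_2$ are minimal'' rather than ``both are controllable and observable,'' but by Theorem \ref{thm:9May19a} these are equivalent, and you have made that translation explicit.
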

\begin{proof}
Applying Theorem
\ref{thm:2May16a}
for the nc Fornasini--Marchesini realization 
$\cR_1$, 
there exists a 
minimal nc Fornasini--Marchesini realization 
$\wt{\cR_1}$
centred at
$\ul{Y}$
for which
\begin{align*}
DOM_{sm}(\cR_1)
\subseteq DOM_{sm}
(\wt{\cR_1})
\text{ and }\cR_1(\ul{X})
=\wt{\cR_1}(\ul{X}),
\forall \ul{X}\in 
DOM_{sm}(\cR_1)
\end{align*} 
for every
$m\in\NN$,
and for any unital stably finite $\KK-$algebra
$\cA$:
\begin{align*}
DOM^{\cA}(\cR_1)\subseteq DOM^{\cA}(\wt{\cR_1})
\text{ and }\cR_1^{\cA}(\ul{\fA})=
\wt{\cR_1}^{\cA}(\ul{\fA}),\,
\forall \ul{\fA}\in DOM^{\cA}(\cR_1).
\end{align*}
In particular 
$\wt{\cR_1}$ 
is a nc Fornasini--Marchesini realization of 
$R$.
As both 
$\cR_2$ 
and 
$\wt{\cR_1}$ 
are minimal nc Fornasini--Marchesini realizations of 
$R$, 
both centred at 
$\ul{Y}$, 
Theorem
\ref{thm:2May16b} 
implies that
\begin{align*}
DOM_{sm}(\wt{\cR_1})
=DOM_{sm}(\cR_2)
\text{ and }\wt{\cR_1}
(\ul{X})=\cR_2(\ul{X}),
\forall \ul{X}\in 
DOM_{sm}(\wt{\cR_1})
\end{align*}
and 
\begin{align*}
DOM^{\cA}(\wt{\cR_1})=DOM^{\cA}(\cR_2)
\text{ and }\wt{\cR_1}^{\cA}(\ul{\fA})=
\cR_2^{\cA}(\ul{\fA}),\,
\forall\ul{\fA}\in DOM^{\cA}(\wt{\cR_1}).
\end{align*}
Therefore, 
$DOM_{sm}(\cR_1)\subseteq 
DOM_{sm}(\wt{\cR_1})
=DOM_{sm}(\cR_2)$
and
$\cR_1(\ul{X})=\wt{\cR_1}
(\ul{X})=\cR_2(\ul{X})$
for every 
$\ul{X}\in 
DOM_{sm}(\cR_1)$.
Moreover, 
$DOM^{\cA}(\cR_1)\subseteq 
DOM^{\cA}(\wt{\cR_1})=
DOM^{\cA}(\cR_2)$
and 
$\cR_1^{\cA}(\ul{\fA})
=\wt{\cR_1}^{\cA}(\ul{\fA})
=\cR_2^{\cA}(\ul{\fA})$
for every 
$\ul{\fA}\in DOM^{\cA}(\cR_1)$.
\end{proof}
The following is a summary of all of the main results 
in this subsection. 
\begin{cor}
\label{cor:13Feb18a}
If 
$R$ 
is a nc rational expression in 
$x_1,\ldots,x_d$ 
over 
$\KK$ 
and 
$\ul{Y}\in dom_s(R)\subseteq(\KK^{\sts})^d$, 
then 
$R$ 
admits a unique (up to unique similarity) 
minimal nc Fornasini--Marchesini realization 
centred at 
$\ul{Y}$ 
that is also a realization of 
$R$ 
w.r.t any 
unital stably finite $\KK-$algebra.

Moreover, any minimal nc Fornasini--Marchesini realization of 
$R$
centred at 
$\ul{Y}$
is a realization of
$R$
w.r.t any
unital stably finite 
$\KK-$algebra. 
\end{cor}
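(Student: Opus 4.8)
The plan is to assemble the main results of this section: Theorems \ref{thm:21Ap17a}, \ref{thm:2May16a}, \ref{thm:9May19a} and \ref{thm:2May16b}. For \textbf{existence}, I would first invoke Theorem \ref{thm:21Ap17a} to obtain a nc Fornasini--Marchesini realization $\cR_0$ of $R$ centred at $\ul Y$ which is also a realization of $R$ w.r.t any unital $\KK$-algebra. Since $\cR_0$ need not be minimal, I would next apply the Kalman decomposition (Theorem \ref{thm:2May16a}) to $\cR_0$, producing a controllable and observable realization $\wt{\cR}$ centred at $\ul Y$ with $DOM_{sm}(\cR_0)\subseteq DOM_{sm}(\wt{\cR})$ and $\cR_0(\ul X)=\wt{\cR}(\ul X)$ on $DOM_{sm}(\cR_0)$ for every $m\in\NN$, and --- for every unital \emph{stably finite} $\KK$-algebra $\cA$ --- with $DOM^{\cA}(\cR_0)\subseteq DOM^{\cA}(\wt{\cR})$ and $\cR_0^{\cA}=\wt{\cR}^{\cA}$ on $DOM^{\cA}(\cR_0)$. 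From these containments it follows at once that $\wt{\cR}$ is still a realization of $R$, both over matrices and w.r.t any unital stably finite $\KK$-algebra: indeed $dom_{sm}(R)\subseteq DOM_{sm}(\cR_0)\subseteq DOM_{sm}(\wt{\cR})$ and for $\ul X\in dom_{sm}(R)$ one has $R(\ul X)=\cR_0(\ul X)=\wt{\cR}(\ul X)$, and the algebra version is checked the same way using part 2 of Definition \ref{def:25Sep18a} applied to $I_s\otimes\ul{\fa}$ for $\ul{\fa}\in dom^{\cA}(R)$. Being controllable and observable, $\wt{\cR}$ is minimal by Theorem \ref{thm:9May19a}, which settles existence.

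For \textbf{uniqueness up to unique similarity}, I would take any two minimal realizations $\cR_1,\cR_2$ of $R$ centred at $\ul Y$; by Theorem \ref{thm:9May19a} both are controllable and observable, and since $R$ is $(\KK^d)_{nc}$-evaluation equivalent to itself, Theorem \ref{thm:2May16b} gives $L_1=L_2$, $D^1=D^2$, and a unique invertible $T$ satisfying \eqref{eq:19Oct17c}; the same theorem also yields $DOM^{\cA}(\cR_1)=DOM^{\cA}(\cR_2)$ together with $\cR_1^{\cA}=\cR_2^{\cA}$ on $DOM^{\cA}(\cR_1)$ for \emph{every} unital $\KK$-algebra $\cA$.

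For the \textbf{``moreover''} assertion, let $\cR'$ be an arbitrary minimal realization of $R$ centred at $\ul Y$, and let $\wt{\cR}$ be the realization constructed above, which is minimal and is a realization of $R$ w.r.t any unital stably finite $\KK$-algebra. By the uniqueness just established, $\cR'$ and $\wt{\cR}$ are uniquely similar, so Theorem \ref{thm:2May16b} gives $DOM^{\cA}(\cR')=DOM^{\cA}(\wt{\cR})$ and $(\cR')^{\cA}(\ul{\fA})=\wt{\cR}^{\cA}(\ul{\fA})$ for all $\ul{\fA}\in DOM^{\cA}(\cR')$. Hence, for a unital stably finite $\cA$ and $\ul{\fa}\in dom^{\cA}(R)$, we get $I_s\otimes\ul{\fa}\in DOM^{\cA}(\wt{\cR})=DOM^{\cA}(\cR')$ and $I_s\otimes R^{\cA}(\ul{\fa})=\wt{\cR}^{\cA}(I_s\otimes\ul{\fa})=(\cR')^{\cA}(I_s\otimes\ul{\fa})$, i.e.\ $\cR'$ is a realization of $R$ w.r.t $\cA$. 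Since every step cites an already-proved statement, there is no genuine obstacle; the only point to watch is that stable finiteness enters \emph{only} through the Kalman decomposition (Theorem \ref{thm:2May16a}, via Lemma \ref{lem:19Oct18a}) --- over matrices of all sizes everything goes through for an arbitrary unital $\KK$-algebra, whereas compressing $\cR_0$ to its controllable--observable part on the level of a general algebra is where finiteness is needed.
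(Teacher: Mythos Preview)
Your proposal is correct and follows essentially the same approach as the paper: obtain a (not necessarily minimal) realization via Theorem~\ref{thm:21Ap17a}, cut it down to a controllable and observable one via the Kalman decomposition (Theorem~\ref{thm:2May16a}), invoke Theorem~\ref{thm:9May19a} for minimality, and use Theorem~\ref{thm:2May16b} both for uniqueness and to transfer the $\cA$-realization property to an arbitrary minimal realization. Your remark that stable finiteness enters only through the Kalman step is exactly the point the paper exploits as well.
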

\begin{proof}
From Theorem 
\ref{thm:21Ap17a}, 
$R$ admits a nc Fornasini--Marchesini realization 
$\cR$ 
centred at 
$\ul{Y}$ 
that is also a nc Fornasini--Marchesini realization of 
$R$
w.r.t any 
unital stably finite
$\KK-$algebra $\cA$,
while Theorem
\ref{thm:2May16a} 
guarantees
the existence of a minimal nc Fornasini--Marchesini realization 
$\wt{\cR}$
 centred at 
$\ul{Y}$, for which
$$DOM_{sm}(\cR)\subseteq 
DOM_{sm}(\wt{\cR})
\text{ and }
\cR(\ul{X})=\wt{\cR}(\ul{X}),\,
\forall \ul{X}\in 
DOM_{sm}(\cR)$$
for every
$m\in\NN$, and 
$$DOM^{\cA}(\cR)\subseteq DOM^{\cA}(\wt{\cR})
\text{ and }\cR^{\cA}(\ul{\fA})=\wt{\cR}^{\cA}(\ul{\fA}),\,
\forall \ul{\fA}\in DOM^{\cA}(\cR). $$
Therefore, for every $m\in\NN$,
\begin{equation*}
dom_{sm}(R)\subseteq 
DOM_{sm}(\wt{\cR})
\text{ and }
R(\ul{X})=\wt{\cR}(\ul{X}),\,
\forall \ul{X}
\in dom_{sm}(R),
\end{equation*}
i.e., 
$\wt{\cR}$ is a realization of $R$, 
while the uniqueness of 
$\wt{\cR}$ 
follows from Theorem 
\ref{thm:2May16b}. 

$\bullet$ Moreover, if 
$\ul{\fa}\in dom^{\cA}(R)$, then 
$I_s\otimes\ul{\fa}\in DOM^{\cA}(\wt{\cR})$
and
$I_s\otimes 
R^{\cA}(\ul{\fa})=
\cR^{\cA}(I_s\otimes \ul{\fa})=
\wt{\cR}^{\cA}(I_s\otimes \ul{\fa}),$
i.e., $\wt{\cR}$ is a realization of $R$ w.r.t $\cA$.

$\bullet$ Furthermore, if 
$\check{\cR}$ is a minimal nc Fornasini--Marchesini realization of 
$R$
centred at
$\ul{Y}$, then 
$\wt{\cR}$ 
and
$\check{\cR}$ 
are both minimal nc Fornasini--Marchesini realizations of 
$R$ 
centred at
$\ul{Y}$, 
hence by Lemma
\ref{thm:2May16b},
$DOM^{\cA}(\check{\cR})=
DOM^{\cA}(\wt{\cR})
\text{ and }
\check{\cR}^{\cA}(\ul{\fA})
=\wt{\cR}^{\cA}(\ul{\fA})$,
for every
$\ul{\fA}\in DOM^{\cA}(\check{\cR}).$
Therefore, for every 
$\ul{\fa}\in dom^{\cA}(R)$
we have
$I_s\otimes\ul{\fa}\in 
DOM^{\cA}(\wt{\cR})=DOM^{\cA}(\check{\cR})$ 
and 
$I_s\otimes R^{\cA}(\ul{\fa})=
\wt{\cR}^{\cA}(I_s\otimes\ul{\fa})
=\check{\cR}^{\cA}(I_s\otimes\ul{\fa})$, 
i.e.,
$\check{\cR}$ 
is a nc Fornasini--Marchesini realization of 
$R$ w.r.t 
$\cA$.
\end{proof}

\subsection{Example}
\label{subsec:ex}
Consider the nc rational expression 
$R(x_1,x_2)=(x_1x_2-x_2x_1)^{-1}$, 
with 
$\KK=\CC,\,s=2$ 
and
$\ul{Y}=(Y_1,Y_2)=\left(
\begin{pmatrix}
0&1\\
1&0\\
\end{pmatrix},
\begin{pmatrix}
1&0\\
0&-1\\
\end{pmatrix}\right)\in dom_2(R)$.
We use synthesis and follow the proof of 
Theorem \ref{thm:21Ap17a}, 
to find a nc Fornasini--Marchesini realization of 
$R$
centred at
$\ul{Y}$:\\
$\bullet\, R_1(\ul{x})=x_1$
admits a nc Fornasini--Marchesini realization centred at 
$\ul{Y}$ 
(see (\ref{eq:20Mar19b})),
described by
$$L_1=2,\,
D_1=\begin{pmatrix}0&1\\1&0\\
\end{pmatrix},\,
C_1=I_2,\,
\bA_1^1=\bA_2^1=0_2,\,
\bB_1^1=Id_2,\,
\bB_2^1=0_2.$$
$\bullet\, R_2(\ul{x})=x_2$
admits a  nc Fornasini--Marchesini  realization centred at $\ul{Y}$ 
(see
(\ref{eq:20Mar19b})), described by
$$L_2=2,\,
D_2=\begin{pmatrix}
1&0\\
0&-1\\
\end{pmatrix},\,
C_2=I_2,\,
\bA_1^2=\bA_2^2=0_2,\,
\bB_1^2=0_2,\,
\bB_2^2=Id_2.$$
$\bullet \,R_3(\ul{x})=R_1(\ul{x})R_2(\ul{x})=x_1x_2$ 
admits a nc Fornasini--Marchesini realization centred at 
$\ul{Y}$ 
(see 
(\ref{eq:20Mar19d})), described by 
\begin{multline*}
L_3=4,\,
D_3=\begin{pmatrix}0&-1\\1&0\\
\end{pmatrix},\,
C_3=
\begin{pmatrix}1&0&0&1\\
0&1&1&0\\
\end{pmatrix},\,
\bA_1^3(X)=(I_2\otimes X)
\begin{bmatrix}
0_2&I_2\\
0_2&0_2\\
\end{bmatrix},\,\\
\bA_2^3(X)=0_4,\,
\bB_1^3(X)=(I_2\otimes X)
\begin{bmatrix}
\begin{pmatrix}
1&0\\0&-1\\
\end{pmatrix}\\
0_2\\
\end{bmatrix},\,
\bB_2^3(X)=(I_2\otimes X)
\begin{bmatrix}
0_2\\I_2\\
\end{bmatrix}.
\end{multline*}
$\bullet\,R_4(\ul{x})=-R_2(\ul{x})R_1(\ul{x})=-x_2x_1$
admits a  nc Fornasini--Marchesini  realization centred at 
$\ul{Y}$
(see 
(\ref{eq:20Mar19d})),described by
\begin{multline*}
L_4=4,\,
D_4
=\begin{pmatrix}
0&-1\\
1&0\\
\end{pmatrix},\,
C_4=
\begin{pmatrix}
-1&0&-1&0\\
0&-1&0&1\\
\end{pmatrix},\,
\bA_1^4(X)=0_4,\,
\\\bA_2^4(X)=(I_2\otimes X)
\begin{bmatrix}
0_2&I_2\\
0_2&0_2\\
\end{bmatrix},\,
\bB_1^4(X)=(I_2\otimes X)
\begin{bmatrix}
0_2\\I_2\\
\end{bmatrix},\,
\bB_2^4(X)=(I_2\otimes X)
\begin{bmatrix}
\begin{pmatrix}
0&1\\1&0\\
\end{pmatrix}\\
0_2\\
\end{bmatrix}.
\end{multline*}
$\bullet\,R_5(\ul{x})=R_3(\ul{x})+R_4(\ul{x})
=x_1x_2-x_2x_1$
admits a  nc Fornasini--Marchesini  
realization centred at $\ul{Y}$
(see (\ref{eq:20Mar19c})), described by
\begin{multline*}
L_5=8,\,
D_5=
\begin{pmatrix}
0&-2\\
2&0\\
\end{pmatrix},\,
C_5=
\begin{pmatrix}
1&0&0&1&-1&0&-1&0\\
0&1&1&0&0&-1&0&1\\
\end{pmatrix},\\
\bA_1^5(X)=
(I_4\otimes X)
\begin{bmatrix}
0_2&I_2&0_2&0_2\\
0_2&0_2&0_2&0_2\\
0_2&0_2&0_2&0_2\\
0_2&0_2&0_2&0_2\\
\end{bmatrix},\,
\bA_2(X)=(I_4\otimes X)
\begin{bmatrix}
0_2&0_2&0_2&0_2\\
0_2&0_2&0_2&0_2\\
0_2&0_2&0_2&I_2\\
0_2&0_2&0_2&0_2\\
\end{bmatrix},\\
\bB_1(X)=(I_4\otimes X)
\begin{bmatrix}
\begin{pmatrix}
1&0\\0&-1\\
\end{pmatrix}\\
0_2\\
0_2\\
I_2\\
\end{bmatrix},\,
\bB_2(X)=(I_4\otimes X)
\begin{bmatrix}
0_2\\
I_2\\
\begin{pmatrix}
0&1\\1&0\\
\end{pmatrix}\\
0_2\
\end{bmatrix}.
\end{multline*}
$\bullet\,R(\ul{x})
=R_5(\ul{x})^{-1}=(x_1x_2-x_2x_1)^{-1}$
admits a  nc Fornasini--Marchesini  realization 
$\cR$ 
centred at 
$\ul{Y}$
(see
(\ref{eq:20Mar19e})), 
described by
\begin{multline*}
L=8,\,
D=\frac{1}{2}\begin{pmatrix}
0&1\\
-1&0\\
\end{pmatrix},\,
C=\frac{1}{2}
\begin{pmatrix}
0&1&1&0&0&-1&0&1\\
-1&0&0&-1&1&0&1&0\\
\end{pmatrix},
\\\bA_1(X)=
\frac{1}{2}(I_4\otimes X)
\begin{bmatrix}
\begin{pmatrix}0&-1\\-1&0\\
\end{pmatrix}&I_2&\begin{pmatrix}
0&1\\1&0\\
\end{pmatrix}&\begin{pmatrix}
0&-1\\1&0\\
\end{pmatrix}\\
0_2&0_2&0_2&0_2\\
0_2&0_2&
0_2&0_2\\
\begin{pmatrix}
0&-1\\1&0\\
\end{pmatrix}&\begin{pmatrix}
-1&0\\0&1\\
\end{pmatrix}&\begin{pmatrix}
0&1\\-1&0\\
\end{pmatrix}&\begin{pmatrix}
0&-1\\-1&0\\
\end{pmatrix}\\
\end{bmatrix},
\end{multline*}
\begin{align*}
\bA_2(X)&=\frac{1}{2}(I_4\otimes X)
\begin{bmatrix}
0_2&0_2&0_2&0_2\\
\begin{pmatrix}0&-1\\1&0\\
\end{pmatrix}&\begin{pmatrix}
-1&0\\0&1\\
\end{pmatrix}&\begin{pmatrix}
0&1\\-1&0\\
\end{pmatrix}&\begin{pmatrix}
0&-1\\-1&0\\
\end{pmatrix}\\
\begin{pmatrix}
1&0\\0&-1\\
\end{pmatrix}&\begin{pmatrix}
0&1\\-1&0\\
\end{pmatrix}&\begin{pmatrix}
-1&0\\0&1\\
\end{pmatrix}&I_2\\
0_2&0_2&0_2&0_2\\
\end{bmatrix},
\end{align*}
\begin{align*}
\bB_1(X)=\frac{1}{2}(I_4\otimes X)
\begin{bmatrix}
\begin{pmatrix}
0&-1\\
-1&0\\
\end{pmatrix}\\
0_2\\
0_2\\
\begin{pmatrix}
0&-1\\
1&0\\
\end{pmatrix}\\
\end{bmatrix},\,
\bB_2(X)=\frac{1}{2}
(I_4\otimes X)
\begin{bmatrix}
0_2\\ 
\begin{pmatrix}
0&-1\\1&0\\
\end{pmatrix}\\
\begin{pmatrix}
1&0\\0&-1\\
\end{pmatrix}\\
0_2\\
\end{bmatrix}.
\end{align*}
The nc Fornasini--Marchesini realization $\cR$ is controllable,
however $\cR$ is not observable, as
$$\obs{C}{\ul{\bA}}=
\bigcap_{\omega\in\cG_2,\,
Z_1,\ldots,Z_{\ell}\in\CC^{2\times2}}
\ker\big(C\ul{\bA}^{\omega}(Z_1,\ldots,Z_{\ell})\big)
=span\{\ul{e}_1+\ul{e}_5,\ul{e}_2+\ul{e}_6\},$$
hence 
$\cR$
is not minimal. By the Kalman decomposition 
(see Theorem  
\ref{thm:2May16a})
argument, we obtain a minimal nc Fornasini--Marchesini realization of 
$R$ centred at 
$\ul{Y}$
of dimension 
$\wt{L}=\dim(\cC_{\ul{\bA},\ul{\bB}})-\dim(\obs{C}{\ul{\bA}}\cap\cC_{\ul{\bA},\ul{\bB}})=6$, that is
\begin{align*}
\wt{\cR}(X_1,X_2)=
\wt{D}+\wt{C}
\big(I_6-
\wt{\bA_1}(X_1-Y_1)-\wt{\bA_2}(X_2-Y_2)\big)^{-1} 
\big(\wt{\bB_1}(X_1-Y_1)+\wt{\bB_2}(X_2-Y_2)\big),
\end{align*}
with  
\begin{multline*}
\wt{D}=\frac{1}{2}\begin{pmatrix}
0&1\\
-1&0\\
\end{pmatrix},\, 
\wt{C}
=\frac{1}{2}\begin{pmatrix}
1&0&0&1&0&2\\
0&-1&1&0&-2&0\\
\end{pmatrix},\\
\wt{\bA_1}(X)=\frac{1}{2}(I_3\otimes X)
\begin{bmatrix}
0_2&0_2&0_2\\
\begin{pmatrix}
-1&0\\0&1\\
\end{pmatrix}&
\begin{pmatrix}
0&-1\\-1&0\\
\end{pmatrix}&
\begin{pmatrix}
0&-2\\2&0\\
\end{pmatrix}\\
\begin{pmatrix}
\frac{1}{2}&0\\
0&\frac{1}{2}\\
\end{pmatrix}&\begin{pmatrix}
0&-\frac{1}{2}\\
\frac{1}{2}&0\\
\end{pmatrix}&
\begin{pmatrix}
0&-1\\ -1&0\\
\end{pmatrix}\\
\end{bmatrix},
\\\wt{\bA_2}(X)=\frac{1}{2}
(I_3\otimes X)
\begin{bmatrix}
\begin{pmatrix}
-1&0\\0&1\\
\end{pmatrix}&
\begin{pmatrix}
0&-1\\ -1&0\\
\end{pmatrix}&
\begin{pmatrix}
0&-2\\2&0\\
\end{pmatrix}\\
0_2&0_2&0_2\\
\begin{pmatrix}
0&-\frac{1}{2}\\ \frac{1}{2}&0\\
\end{pmatrix}
&\begin{pmatrix}
-\frac{1}{2}&0\\0&-\frac{1}{2}\\
\end{pmatrix}&
\begin{pmatrix}
-1&0\\0&1\\
\end{pmatrix}\\
\end{bmatrix},\\
\wt{\bB_1}(X)=
\frac{1}{2}(I_3\otimes X)
\begin{bmatrix}
0_2\\
\begin{pmatrix}
0&-1\\ 1&0\\
\end{pmatrix}\\
\begin{pmatrix}
0&-\frac{1}{2}\\
-\frac{1}{2}&0\\
\end{pmatrix}\\
\end{bmatrix},\,
\wt{\bB_2}(X)=
\frac{1}{2}(I_3\otimes X)
\begin{bmatrix}
\begin{pmatrix}
0&-1\\ 1&0\\
\end{pmatrix}\\
0_2\\
\begin{pmatrix}
-\frac{1}{2}&0\\
0&\frac{1}{2}\\
\end{pmatrix}\\
\end{bmatrix}.
\end{multline*}
In this example it is easy to check directly that
\begin{align*}
\,\det \Big(I_{6m}-\sum_{k=1}^2 
(X_k-I_m\otimes Y_k)\wt{\bA_k}\Big)\ne 0
\iff
\det(X_1X_2-X_2X_1)\ne0
\end{align*} 
for 
$\ul{X}=(X_1,X_2)\in(\CC^{2m\times 2m})^2$,
i.e., that 
$dom_{2m}(R)=DOM_{2m}(\wt{\cR})$ 
and also that 
$R(\ul{X})=\wt{\cR}(\ul{X})$.
Furthermore, for any unital stably finite 
$\CC-$algebra $\cA$ and  
$\ul{\fa}
=(\fa_1,\fa_2)\in\cA^2$, 
the element
$\fa_1\fa_2-\fa_2\fa_1$
is invertible in  
$\cA$
if and only if the matrix 
\begin{align*}
\Big(I_6\otimes 1_{\cA}-\sum_{k=1}^2 
\wt{\bA_k}
(I_2)\otimes \fa_k
-\wt{\bA_k}(Y_k)\otimes 1_{\cA}\Big)
\text{ is invertible in }\cA^{6\times6}
\end{align*}
i.e., 
$dom^{\cA}(R)=DOM^{\cA}(\wt{\cR})$ 
and also 
$I_2\otimes R^{\cA}(\ul{\fa})
=\cR^{\cA}(I_2\otimes\ul{\fa})$.
\subsection{Cohn's theorem}
As a corollary of the results in Subsection
\ref{sec:Kal}, we get a new proof 
of a theorem of Cohn, stating that if two nc rational expressions represents the same nc rational function, then they are 
$\cA-$evaluation equivalent for any unital stably finite 
$\KK-$algebra 
$\cA$. 
Cohn's Theorem was proved originally 
in 
\cite{Co82}
and afterwards in his book
\cite[Theorem 7.3.2]{Co95}. 
In  
\cite{HMS}, 
the authors proved a weaker version 
of the theorem, applicable only for nc rational 
expressions which are regular at 
the origin, using their theory of realizations for nc regular rational functions. 
We omit their assumption on regularity at
the origin and prove 
the theorem in its full version.
\begin{theorem}[Cohn's Theorem]
\label{thm:22Oct18a}
Let 
$R$ 
and 
$\wt{R}$ 
be 
$(\KK^d)_{nc}-$evaluation equivalent nc 
rational expressions in 
$x_1,\ldots,x_d$ 
over 
$\KK$, 
i.e.,
$R(\ul{X})=\wt{R}(\ul{X})$
for all 
$\ul{X}\in 
dom(R)\cap 
dom(\wt{R})$.
Then $R$ and $\wt{R}$ 
are 
$\cA-$evaluation equivalent
for any unital stably finite 
$\KK-$algebra $\cA$ , 
i.e.,
$$R^{\cA}(\ul{\fa})=\wt{R}^{\cA}(\ul{\fa}),\,
\forall\ul{\fa}
\in dom^{\cA}(R)
\cap dom^{\cA}(\wt{R}).$$
\end{theorem}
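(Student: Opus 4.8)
The plan is to transfer the equivalence from matrices to $\cA$ by squeezing both expressions through \emph{minimal} realizations centred at one and the same matrix point. Since $R$ and $\wt{R}$ are non-degenerate and $(\KK^d)_{nc}$-evaluation equivalent, they represent the same nc rational function; the one ingredient not developed in the present paper is that two non-degenerate expressions representing the same function share a point of regularity, i.e., there are $s\in\NN$ and $\ul{Y}=(Y_1,\ldots,Y_d)\in dom_s(R)\cap dom_s(\wt{R})$ (the domains of regularity of non-degenerate expressions are non-empty, and in fact Zariski dense, at all sufficiently large matrix levels, so two of them must meet; see \cite{KV1,KV3,KV4}). Fixing such a $\ul{Y}$, Corollary \ref{cor:13Feb18a} provides a minimal nc Fornasini--Marchesini realization $\cR$ of $R$ centred at $\ul{Y}$ which is simultaneously a realization of $R$ with respect to every unital stably finite $\KK$-algebra, and likewise a minimal realization $\wt{\cR}$ of $\wt{R}$ centred at $\ul{Y}$ which is a realization of $\wt{R}$ with respect to every unital stably finite $\KK$-algebra. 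This is the only place where stable finiteness is used: it enters through the Kalman decomposition of Theorem \ref{thm:2May16a}, which is what upgrades a synthesized realization over $\cA$, furnished by Theorem \ref{thm:21Ap17a}, to a \emph{minimal} one over $\cA$.

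Next I would invoke the rigidity of minimal realizations. By Theorem \ref{thm:9May19a} the minimal realizations $\cR$ and $\wt{\cR}$ are controllable and observable; they are centred at the same point $\ul{Y}$ and are realizations of the $(\KK^d)_{nc}$-evaluation equivalent expressions $R$ and $\wt{R}$, so Theorem \ref{thm:2May16b} applies: $\cR$ and $\wt{\cR}$ are uniquely similar, and --- crucially for our purposes --- for \emph{any} unital $\KK$-algebra $\cA$ one has $DOM^{\cA}(\cR)=DOM^{\cA}(\wt{\cR})$ and $\cR^{\cA}(\ul{\fA})=\wt{\cR}^{\cA}(\ul{\fA})$ for every $\ul{\fA}\in DOM^{\cA}(\cR)$.

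Finally I would read off the conclusion. Let $\cA$ be a unital stably finite $\KK$-algebra and let $\ul{\fa}=(\fa_1,\ldots,\fa_d)\in dom^{\cA}(R)\cap dom^{\cA}(\wt{R})$. Since $\cR$ is a realization of $R$ with respect to $\cA$, Definition \ref{def:25Sep18a}(2) gives $I_s\otimes\ul{\fa}\in DOM^{\cA}(\cR)$ and $I_s\otimes R^{\cA}(\ul{\fa})=\cR^{\cA}(I_s\otimes\ul{\fa})$; similarly $I_s\otimes\ul{\fa}\in DOM^{\cA}(\wt{\cR})$ and $I_s\otimes\wt{R}^{\cA}(\ul{\fa})=\wt{\cR}^{\cA}(I_s\otimes\ul{\fa})$. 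As $I_s\otimes\ul{\fa}$ lies in the common $\cA$-domain $DOM^{\cA}(\cR)=DOM^{\cA}(\wt{\cR})$, the previous paragraph yields $\cR^{\cA}(I_s\otimes\ul{\fa})=\wt{\cR}^{\cA}(I_s\otimes\ul{\fa})$, hence $I_s\otimes R^{\cA}(\ul{\fa})=I_s\otimes\wt{R}^{\cA}(\ul{\fa})$ in $\cA^{s\times s}$, and comparing a single entry gives $R^{\cA}(\ul{\fa})=\wt{R}^{\cA}(\ul{\fa})$, as required.

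The step I expect to be the main obstacle is the very first one --- producing a single matrix point $\ul{Y}$ at which both $R$ and $\wt{R}$ are regular. Once that is available, everything else is bookkeeping: it only combines the two directions of Definition \ref{def:25Sep18a}, the existence of minimal realizations over stably finite algebras (Corollary \ref{cor:13Feb18a}), and the $\cA$-version of the similarity theorem (Theorem \ref{thm:2May16b}).
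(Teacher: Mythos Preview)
Your proposal is correct and follows essentially the same route as the paper's proof: pick a common matrix centre $\ul{Y}\in dom_{s}(R)\cap dom_{s}(\wt{R})$ using Zariski density of the domains, invoke Corollary~\ref{cor:13Feb18a} to get minimal realizations of $R$ and $\wt{R}$ at $\ul{Y}$ that work over any unital stably finite $\KK$-algebra, apply Theorem~\ref{thm:2May16b} to identify their $\cA$-domains and $\cA$-evaluations, and then read off $R^{\cA}(\ul{\fa})=\wt{R}^{\cA}(\ul{\fa})$ by tensoring with $I_s$. The only minor remark is that the ``main obstacle'' you flag is in fact handled in the paper by the Zariski-open argument (with a short adjustment via an extra factor $\ell$ when $\KK$ is finite), not imported from elsewhere.
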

\begin{proof}
Assume
$R$ and $\wt{R}$ 
are non-degenerate nc rational expressions, so
there exists 
$s\in\NN$ such that
$dom_s(R),dom_s(\wt{R})\ne\emptyset$.
As
$dom_s(R),dom_s(\wt{R})$ 
are Zariski open sets in 
$\KK^{\sts}$,
there exists 
$\ell\in\NN$ 
such that 
$dom_{s\ell}(R)\cap 
dom_{s\ell}(\wt{R})\ne \emptyset$.
The reasoning for that is clear when 
$\KK$
is infinite (with 
$\ell=1$), 
while if 
$\KK$
is finite we use a similar argument as in
\cite[Remark 2.6]{KV3}. 
Let
\begin{equation*}
\ul{Y}=(Y_1,\ldots,Y_d)\in 
dom_{s\ell}(R)\cap dom_{s\ell}(\wt{R}).
\end{equation*}
From Corollary
\ref{cor:13Feb18a}, 
$R$ 
and 
$\wt{R}$ 
admit minimal nc Fornasini--Marchesini realizations 
$\cR$
and
$\wt{\cR}$, respectively, both centred at 
$\ul{Y}$ 
with the special properties: 
\begin{align*}
\ul{\fa}\in dom^{\cA}(R)\Longrightarrow 
 I_{s\ell}\otimes \ul{\fa}\in DOM^{\cA}(\cR)
\text{ and }I_{s\ell}\otimes 
R^{\cA}(\ul{\fa})
=\cR^{\cA}(I_{s\ell}\otimes \ul{\fa}),
\\
\ul{\fa}\in dom^{\cA}(\wt{R})
\Longrightarrow I_{s\ell}\otimes \ul{\fa}\in 
DOM^{\cA}(\wt{\cR})
\text{ and }I_{s\ell}\otimes 
\wt{R}^{\cA}(\ul{\fa})
=\wt{\cR}^{\cA}(I_{s\ell}\otimes \ul{\fa}).
\end{align*}
Moreover, Theorem 
\ref{thm:2May16b} implies that
\begin{align*}
DOM^{\cA}(\cR)=DOM^{\cA}(\wt{\cR})
\text{ and }\cR^{\cA}(\ul{\fA})
=\wt{\cR}^{\cA}(\ul{\fA}),\,
\forall \ul{\fA}\in DOM^{\cA}(\cR).
\end{align*}
Finally, if 
$$\ul{\fa}\in dom^{\cA}(R)
\cap dom^{\cA}(\wt{R}),$$ 
then
$I_{s\ell}\otimes\ul{\fa}\in 
DOM^{\cA}(\cR)=DOM^{\cA}(\wt{\cR})$ 
and
$I_{s\ell}\otimes 
R^{\cA}(\ul{\fa})=
\cR^{\cA}(I_{s\ell}\otimes \ul{\fa})=
\wt{\cR}^{\cA}(I_{s\ell}\otimes \ul{\fa})=
I_{s\ell}\otimes 
\wt{R}^{\cA}(\ul{\fa})$,
therefore 
$R^{\cA}(\ul{\fa})=
\wt{R}^{\cA}(\ul{\fa})$.
\end{proof}
\begin{remark}
Theorem 
\ref{thm:22Oct18a}
implies that one can evaluate any nc rational function by evaluating a minimal realization of the function. This proves that
$\KK\plangle\ul{x}\prangle$ 
is the universal skew field of fractions of 
$\KK\langle\ul{x}\rangle$, 
see
\cite{Co71a,Row80} for the original proofs and 
\cite{KV3}
for a modern proof.
We postpone a detailed discussion and  an application to an explicit
construction of 
$\KK\plangle\ul{x}\prangle$
to
\cite{PV3}.
\end{remark}
\subsection{The McMillan degree}
\label{subsec:McMillan}
For a nc rational expression 
$R$ 
in 
$x_1,\ldots,x_d$ 
over 
$\KK$
and 
$\ul{Y}\in dom(R)$, 
we define by 
$L_R(\ul{Y})$ 
the dimension of a minimal nc Fornasini--Marchesini realization of 
$R$ 
centred at 
$\ul{Y}$.
The first part of the next  theorem is an analog  of
\cite[Theorem 5.10]{V2}.
\begin{theorem}
\label{thm:22Oct17c}
Let 
$R$ 
be a non-degenerate nc rational expression in
$x_1,\ldots,x_d$
over
$\KK$
and let 
$\ul{Y}\in dom_s(R).$ 
\begin{enumerate}
\item[1.]
If
$\wt{\ul{Y}}\in dom_s(R)$,
then
$L_R(\ul{Y})=L_R(\wt{\ul{Y}}).$
\item[2.]
If 
$n\in\NN$, 
then
$I_n\otimes \ul{Y}\in dom_{sn}(R)$ 
and 
$L_R(I_n\otimes \ul{Y})=nL_R(\ul{Y}).$
\item[3.]
If
$s^\prime\in\NN$ 
and 
$\ul{Y}^\prime\in 
dom_{s^\prime}(R)$, 
then
$s^\prime L_R(\ul{Y})=s L_R(\ul{Y}^\prime)$.
\end{enumerate}
\end{theorem}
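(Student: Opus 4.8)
The plan is to prove part (1) by a change-of-centre construction, deduce part (2) from the behaviour of minimal realizations when the block size of the centre is enlarged, and obtain part (3) by combining the first two.

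For part (1), I would start from a minimal nc Fornasini--Marchesini realization $\cR$ of $R$ centred at $\ul{Y}$, described by $(L,D,C,\ul{\bA},\ul{\bB})$ with $L=L_R(\ul{Y})$ (it exists by Corollary~\ref{cor:13Feb18a}). Since $\cR$ is a realization of $R$ and $\wt{\ul{Y}}\in dom_s(R)\subseteq DOM_s(\cR)$, the matrix $G:=I_L-\sum_{k=1}^d\bA_k(\wt{Y}_k-Y_k)$ is invertible; set $b:=\sum_{k=1}^d\bB_k(\wt{Y}_k-Y_k)$. Define $\wt{\bA}_k:=G^{-1}\cdot\bA_k$, $\wt{\bB}_k:=G^{-1}\cdot\bB_k+(G^{-1}\cdot\bA_k)\cdot(G^{-1}b)$, $\wt{D}:=\cR(\wt{\ul{Y}})$, and let $\cR'$ be the nc Fornasini--Marchesini realization centred at $\wt{\ul{Y}}$ described by $(L,\wt{D},C,\wt{\ul{\bA}},\wt{\ul{\bB}})$. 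The computational core is the identity
\begin{equation*}
I_{Lm}-\sum_{k=1}^d(X_k-I_m\otimes Y_k)\bA_k=(I_m\otimes G)\Big(I_{Lm}-\sum_{k=1}^d(X_k-I_m\otimes\wt{Y}_k)\wt{\bA}_k\Big),
\end{equation*}
valid for every $m\in\NN$ and every $\ul{X}\in(\KK^{sm\times sm})^d$; it forces $DOM_{sm}(\cR)=DOM_{sm}(\cR')$, and a short manipulation of the Neumann-type expansion then gives $\cR(\ul{X})=\cR'(\ul{X})$ on this common set. Hence $\cR'$ is a realization of $R$ centred at $\wt{\ul{Y}}$ of dimension $L_R(\ul{Y})$, so $L_R(\wt{\ul{Y}})\le L_R(\ul{Y})$, and interchanging $\ul{Y}$ with $\wt{\ul{Y}}$ gives equality.

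For part (2), first note $I_n\otimes\ul{Y}\in dom_{sn}(R)$: the $n$-fold direct sum $\ul{Y}^{\oplus n}$ lies in $dom_{sn}(R)$ because $dom(R)$ is a nc set, and $I_n\otimes\ul{Y}$ is obtained from it by a simultaneous similarity. Now take a minimal $\cR$ centred at $\ul{Y}$ of dimension $L:=L_R(\ul{Y})$ and let $\cR^{(n)}$ be its extension to block size $sn$, i.e. the nc Fornasini--Marchesini realization centred at $I_n\otimes\ul{Y}$ described by $(nL,I_n\otimes D,I_n\otimes C,\ul{\bA},\ul{\bB})$ with $\bA_k,\bB_k$ acting on $sn\times sn$ matrices blockwise. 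Its domain at every level coincides with that of $\cR$, so $\cR^{(n)}$ is again a realization of $R$; it has dimension $nL$, giving $L_R(I_n\otimes\ul{Y})\le nL_R(\ul{Y})$. For the reverse inequality I would show $\cR^{(n)}$ is itself minimal: unwinding the faux products $\odot_s$ as block-matrix products shows that the level-$1$ controllable (resp. un-observable) subspace of $\cR^{(n)}$ equals the level-$n$ controllable (resp. un-observable) subspace of $\cR$, and by Proposition~\ref{prop:28Sep18b} the latter equals $\cC_{\ul{\bA},\ul{\bB}}^{(1)}\otimes\KK^n=\KK^{Ln}$ (resp. $\obs{C}{\ul{\bA}}^{(1)}\otimes\KK^n=\{\ul{0}\}$) because $\cR$ is controllable and observable. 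By Theorem~\ref{thm:9May19a}, $\cR^{(n)}$ is minimal, so $L_R(I_n\otimes\ul{Y})=nL=nL_R(\ul{Y})$.

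Part (3) then follows by applying part (2) to both centres and part (1) once: $I_{s^\prime}\otimes\ul{Y}$ and $I_s\otimes\ul{Y}^\prime$ both lie in $dom_{ss^\prime}(R)$, hence $s^\prime L_R(\ul{Y})=L_R(I_{s^\prime}\otimes\ul{Y})=L_R(I_s\otimes\ul{Y}^\prime)=sL_R(\ul{Y}^\prime)$. I expect the main obstacle to be the bookkeeping in part (1)---checking that $DOM_{sm}(\cR)$ and $DOM_{sm}(\cR')$ agree at \emph{every} level $m$ so that $\cR'$ is a bona fide realization of $R$ and not merely a function agreeing with $R$ on part of its domain---together with the careful rewriting of the faux products in part (2) that reduces the minimality of $\cR^{(n)}$ to Proposition~\ref{prop:28Sep18b}.
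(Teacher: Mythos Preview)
Your proposal is correct and follows essentially the same route as the paper's proof: a change-of-centre construction for part~(1), the amplified realization $\cR^{(n)}$ for part~(2), and the combination for part~(3). The only cosmetic difference is in part~(1): you factor out $I_m\otimes G$ on the left and keep $C$ unchanged (so $\wt{\bA}_k=G^{-1}\cdot\bA_k$), whereas the paper factors $T_1^{-1}$ on the right and absorbs it into $\wt{C}=CT_1^{-1}$ (so $\wt{\bA_k}=\bA_k\cdot T_1^{-1}$); both give a dimension-$L$ realization centred at $\wt{\ul{Y}}$. In part~(2) you actually supply the argument---via Proposition~\ref{prop:28Sep18b}---that the paper leaves as ``it is easily seen''.
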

\begin{proof}
Applying Corollary
\ref{cor:13Feb18a},
the expression 
$R$ 
admits a minimal nc Fornasini--Marchesini realization 
$\cR$ centred at $\ul{Y}$, described by a tuple 
$(L,D,C,\ul{\bA},\ul{\bB})$. 
Let
\begin{align*}
T_1:=I_L-\sum_{k=1}^d
\bA_k(\wt{Y_k}-Y_k)
\text{ and }
T_2:=\sum_{k=1}^d 
\bB_k(Y_k-\wt{Y_k}),
\end{align*}
as
$\wt{\ul{Y}}=(\wt{Y_1},\ldots,\wt{Y_d})
\in dom_s(R)\subseteq
DOM_s(\cR)$,
the matrix
$T_1$
is invertible. Therefore, for every 
$\ul{X}\in DOM_{sm}(\cR)$ 
and
$m\in\NN$:
\begin{multline*}
\cR(\ul{X})=
I_m\otimes D+(I_m\otimes C)
\Big(I_{Lm}-\sum_{k=1}^d 
(X_k-I_m\otimes \wt{Y_k})
\bA_k-\sum_{k=1}^d\big(I_m\otimes
(\wt{Y_k}-Y_k)\big)\bA_k\Big)^{-1}
\Big(
\sum_{k=1}^d \\
(X_k-I_m\otimes \wt{Y_k})\bB_k
-
\sum_{k=1}^d(I_m\otimes(
Y_k-\wt{Y_k}))\bB_k\Big)
=I_m\otimes D+(I_m\otimes CT_1^{-1})
\Big(I_{Lm}-\sum_{k=1}^d 
(X_k-I_m\otimes 
\wt{Y_k})\wt{\bA_k}\Big)^{-1}
\\\sum_{k=1}^d 
(X_k-I_m\otimes \wt{Y_k})\bB_k
-(I_m\otimes CT_1^{-1})
\Big(I_{Lm}-\sum_{k=1}^d 
(X_k-I_m\otimes \wt{Y_k})
\wt{\bA_k}\Big)^{-1}
(I_m\otimes T_2),
\end{multline*}
where 
$\wt{\bA_k}=
\bA_k\cdot T_1^{-1}$, 
and since
\begin{multline*}
\Big(
I_{Lm}-\sum_{k=1}^d 
(X_k-I_m\otimes \wt{Y_k})
\wt{\bA_k}\Big)^{-1}
(I_m\otimes T_2)
=I_m\otimes T_2+
\Big(I_{Lm}-\sum_{k=1}^d 
(X_k-I_m\otimes \wt{Y_k})
\wt{\bA_k}\Big)^{-1}
\\\sum_{k=1}^d(X_k-I_m
\otimes\wt{Y_k})\widehat{\bB_k},
\end{multline*}
where 
$\widehat{\bB_k}=\wt{\bA_k}\cdot 
T_2,$
we conclude that
\begin{multline*}
\cR(\ul{X})=I_m\otimes D-I_m\otimes
(CT_1^{-1} 
T_2)
+(I_m\otimes CT_1^{-1})
\Big(I_{Lm}-\sum_{k=1}^d 
(X_k-I_m\otimes \wt{Y_k})
\wt{\bA_k}\Big)^{-1}\\
\sum_{k=1}^d (X_k-I_m\otimes 
\wt{Y_k})\bB_k
-(I_m\otimes CT_1^{-1})
\Big(I_{Lm}-\sum_{k=1}^d 
(X_k-I_m\otimes \wt{Y_k})
\wt{\bA_k}\Big)^{-1}
\sum_{k=1}^d(X_k-I_m
\otimes\wt{Y_k})\widehat{\bB_k}
\end{multline*}
i.e., that $\cR(\ul{X})=\wt{\cR}(\ul{X})$
where
$\wt{\cR}$ 
is a nc Fornasini--Marchesini realization of 
$R$, 
centred at
$\ul{\wt{Y}}$,
described by
\begin{multline}
\label{eq:21Mar19b}
\wt{D}=D-CT_1^{-1}
T_2,\,
\wt{C}=CT_1^{-1},\,
\wt{\bA_k}=\bA_k \cdot
T_1^{-1},\,
\wt{\bB_k}
=\bB_k-\bA_k\cdot
(T_1^{-1}
T_2).
\end{multline}
Thus, 
$L_R(\wt{\ul{Y}})\le L=L_R(\ul{Y})$ 
and by symmetry we get that
$L_R(\wt{\ul{Y}})=L_R(\ul{Y})$,
hence
$\wt{\cR}$ 
is a minimal nc Fornasini--Marchesini realization of $R$ centred at 
$\ul{\wt{Y}}.$

$\bullet$ Suppose next that $n\in\NN$. 
As 
$dom(R)$ 
is closed under direct sums it follows that 
$I_n\otimes \ul{Y}\in dom_{sn}(R)$,
while for every 
$p\in\NN$ 
letting 
$m=np$ 
yields for every 
$\ul{X}\in dom_{snp}(R)$:
\begin{multline*}
\cR(\ul{X})=I_{np}\otimes D
+(I_{np}\otimes C)
\Big(I_{Lnp}-\sum_{k=1}^d 
(X_k-I_{np}\otimes Y_k)\bA_k\Big)^{-1}
\sum_{k=1}^d (X_k-I_{np}
\otimes Y_k)\bB_k
\\=I_{p}\otimes D^{(n)}+(I_{p}\otimes 
C^{(n)})
\Big(I_{L^{(n)}p}-\sum_{k=1}^d 
(X_k-I_{p}\otimes 
Y_k^{(n)})\bA_k\Big)^{-1}
\sum_{k=1}^d (X_k-I_{p}
\otimes Y_k^{(n)})\bB_k
\end{multline*}
where
\begin{align}
\label{eq:21Mar19c}
L^{(n)}=Ln,\,
D^{(n)}=I_n\otimes D,\,
C^{(n)}=I_n\otimes C
\text{ and }
Y_k^{(n)}=I_n\otimes Y_k.
\end{align}
We obtained a nc Fornasini--Marchesini realization
$\cR^{(n)}$--- 
described by the tuple
$(L^{(n)},D^{(n)},C^{(n)},\ul{\bA},\ul{\bB})$---
of 
$R$ 
that is centred at 
$\ul{Y}^{(n)}:=I_n\otimes\ul{Y}$ 
and it is easily seen that controllability and observability of 
$\cR$ imply the  controllability and observability of
$\cR^{(n)}$ as well, 
thus $\cR^{(n)}$
is minimal and hence 
$L_R(I_n\otimes \ul{Y})=nL_R(\ul{Y}).$

$\bullet$ Finally,
let 
$\ul{Y}^\prime\in 
dom_{s^\prime}(R)$,
then part 
\textbf{2} 
implies that
$I_{s^\prime}\otimes 
\ul{Y},I_s\otimes \ul{Y}^\prime\in 
dom_{ss^\prime}(R)$,
$L_R(I_{s^\prime}\otimes \ul{Y})
=s^\prime L_R(\ul{Y})$
and 
$L_R(I_s\otimes \ul{Y}^\prime)
=sL_R(\ul{Y}^\prime),$
while from part \textbf{1},
$L_R(I_{s^\prime}\otimes \ul{Y})
=L_R(I_s\otimes \ul{Y}^\prime),$
therefore 
$s^\prime L_R(\ul{Y})
=sL_R(\ul{Y}^\prime)$.
\end{proof}
\begin{remark}
\label{rem:18Nov18b}
In the proof of Theorem
\ref{thm:22Oct17c}
we built explicit minimal  nc Fornasini--Marchesini 
realizations
$\wt{\cR}$ and $\cR^{(n)}$ of
$R$,
centred at 
$\wt{\ul{Y}}$ 
and 
$I_n\otimes \ul{Y}$, respectively, 
using a minimal realization 
$\cR$ of 
$R$ centred at
$\ul{Y}$. 
From Corollary
\ref{cor:13Feb18a} 
it follows right away that 
$\wt{\cR}$ 
and 
$\cR^{(n)}$
are also nc Fornasini--Marchesini realizations of 
$R$ 
w.r.t any unital stably finite 
$\KK-$algebra
$\cA$.
Moreover,  direct computations--- which are omitted--- easily show that
\begin{align*}
DOM^{\cA}(\cR)=DOM^{\cA}(\wt{\cR})
\text{ and }
\cR^{\cA}(\ul{\fA})=\wt{\cR}^{\cA}(\ul{\fA}),\,
\forall \ul{\fA}\in DOM^{\cA}(\cR)
\end{align*}
and
\begin{align*}
\ul{\fA}\in DOM^{\cA}(\cR)
\iff I_n\otimes\ul{\fA}\in 
DOM^{\cA}\big(\cR^{(n)}\big)
\text{ and }
I_n\otimes \cR^{\cA}(\ul{\fA})=
\big(\cR^{(n)}\big)^{\cA}(I_n\otimes\ul{\fA}).
\end{align*}
\end{remark}
The first part of Theorem
\ref{thm:22Oct17c} 
guarantees that the value 
$L_R(\ul{Y})$ 
does not depend on 
$\ul{Y}$
but only on 
$s$, 
so it will be denoted 
$L_R(s):=L_R(\ul{Y})$, 
while from the third part of the theorem it follows that there exists
$\fm(R)>0$ such that
\begin{align}
\label{eq:19Feb19a}
L_R(s)=\fm(R)s,
\quad\forall s\ge 1
\end{align}
where 
$\fm(R)$ 
depends only on 
$R$;
We define 
$\fm(R)$ 
as the 
\textbf{McMillan degree} of 
$R$.

In the next lemma we actually show that 
$\fm(R)\in\NN$.
This is a direct corollary and yet separated from the arguments of Theorem
\ref{thm:22Oct17c}, 
as it requires a non-trivial tool 
from PI-ring theory, that is  if 
$R$
is a non-degenerate nc rational expression, then there exists 
$n\in\NN$ 
such
that 
$dom_k(R)\ne\emptyset$ 
for every 
$k\ge n$;
see 
\cite[Chapter 8]{Row80} and \cite[Remarks 2.15 and 2.16]{KV4}
for a more detailed discussion. 
\begin{lemma}
\label{rem:18Nov18a}
If
$R$
is a non-degenerate nc rational expression in
$x_1,\ldots,x_d$
over
$\KK$ 
and 
$dom_s(R)\ne\emptyset$,
then
$s\mid L_R(s)$ 
and hence 
$\fm(R)\in\NN$. 
\end{lemma}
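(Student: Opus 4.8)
The plan is to combine the scaling identity \eqref{eq:19Feb19a}, which asserts $L_R(s) = \fm(R)\,s$ for every admissible centre-size $s$ with $\fm(R) > 0$ depending only on $R$, with the PI-theoretic input recalled just before the statement: since $R$ is non-degenerate there is an $n_0 \in \NN$ such that $dom_k(R) \ne \emptyset$ for all $k \ge n_0$. The whole point is that although \eqref{eq:19Feb19a} a priori only tells us that $\fm(R)$ is a positive \emph{rational}, having \emph{two} admissible sizes whose difference is $1$ pins $\fm(R)$ down to be an integer.

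Concretely, first I would take the two consecutive sizes $k = n_0$ and $k+1 = n_0+1$; by the PI-theoretic fact both $dom_{n_0}(R)$ and $dom_{n_0+1}(R)$ are nonempty, so Corollary \ref{cor:13Feb18a} furnishes minimal nc Fornasini--Marchesini realizations of $R$ centred at points of these domains, and in particular $L_R(n_0)$ and $L_R(n_0+1)$ are well-defined elements of $\NN$ (they are the state-space dimensions of these realizations). Next I would apply \eqref{eq:19Feb19a} to each: $L_R(n_0) = \fm(R)\,n_0$ and $L_R(n_0+1) = \fm(R)\,(n_0+1)$. Subtracting, $\fm(R) = L_R(n_0+1) - L_R(n_0) \in \ZZ$, and since $\fm(R) > 0$ this forces $\fm(R) \in \NN$. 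Finally, for any $s$ with $dom_s(R) \ne \emptyset$ we get $L_R(s) = \fm(R)\,s$ with $\fm(R) \in \NN$, hence $s \mid L_R(s)$, which is the assertion of the lemma; in particular $\fm(R) = L_R(s)/s \in \NN$.

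The argument above is essentially pure bookkeeping once Theorem \ref{thm:22Oct17c} and \eqref{eq:19Feb19a} are in place, so there is no serious internal obstacle. The genuine content --- and the reason this lemma is isolated from Theorem \ref{thm:22Oct17c}, whose proof is elementary linear algebra --- lies entirely in the external ingredient that $dom_k(R) \ne \emptyset$ for all sufficiently large $k$; this is the one place where a non-trivial fact from PI-ring theory (see \cite[Chapter 8]{Row80} and \cite[Remarks 2.15 and 2.16]{KV4}) is unavoidable.
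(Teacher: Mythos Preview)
Your argument is correct and uses the same external PI-theoretic input as the paper, but you extract integrality by a slightly different (and arguably cleaner) device. The paper fixes the given $s$, takes $k_j = sj+1$ for $j$ large enough that $dom_{k_j}(R)\ne\emptyset$, and then uses part~3 of Theorem~\ref{thm:22Oct17c} in the form $k_j\,L_R(s) = s\,L_R(k_j)$ together with $\gcd(s,k_j)=1$ to conclude $s\mid L_R(s)$ directly. You instead pick two \emph{consecutive} admissible sizes $n_0$ and $n_0+1$, apply \eqref{eq:19Feb19a} to each, and subtract to get $\fm(R)=L_R(n_0+1)-L_R(n_0)\in\ZZ$; the divisibility $s\mid L_R(s)$ then falls out for every admissible $s$ at once. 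Both routes hinge on exactly the same nontrivial fact (eventual nonemptiness of $dom_k(R)$), so neither is deeper; your subtraction trick avoids the coprimality step and yields the global statement $\fm(R)\in\NN$ first, whereas the paper's coprimality argument targets the specific $s$ in the hypothesis and reads off $\fm(R)\in\NN$ as a consequence.
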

\begin{proof}
As 
$dom_s(R)\ne\emptyset$,
let
$\ul{Y}\in dom_s(R)$ 
and according to Corollary 
\ref{cor:13Feb18a}, let
$\cR$
be a minimal nc Fornasini--Marchesini 
realization of 
$R$,
centred at
$\ul{Y}$.
Since
$R$ 
is non-degenerate,
there exists 
$n\in\NN$ 
such that 
$dom_k(R)\ne\emptyset$ 
for all
$k\ge n$.
Consider the sequence 
$(k_j)_{j\ge1}$
given by
$k_j=sj+1$, 
clearly for 
$j$ 
large enough we get 
$k_j\ge n$ 
and hence 
$dom_{k_j}(R)\ne\emptyset$.
Let
$\ul{W}\in 
dom_{k_j}(R)$
and apply Theorem 
\ref{thm:22Oct17c} 
for 
$\ul{W}$ and $\ul{Y}$; 
we obtain that 
$k_jL_R(\ul{Y})=sL_R(\ul{W})$, 
but it is easily seen that
$s$ 
and 
$k_j$ 
are co-prime integers, thus
$s\mid L_R(\ul{Y})$
 and hence  
 $\fm(R)\in\NN$.
\end{proof}
\begin{remark}
\label{rem:15Apr19a}
If
$R$ 
is a nc rational expression in 
$x_1,\ldots,x_d$
over
$\KK,\,\ul{Y}_1\in 
dom_{s_1}(R)$
and
$\ul{Y}_2\in 
dom_{s_2}(R)$,
then
$\ul{Y}_1\oplus \ul{Y}_2\in
dom_{s_1+s_2}(R)$ 
and 
(\ref{eq:19Feb19a}) 
implies that
$$L_{R}(s_1+s_2)=(s_1+s_2)\fm(R)
=s_1\fm(R)+s_2\fm(R)=L_R(s_1)+L_R(s_2).$$
If 
$R$ 
admits two minimal nc Fornasini--Marchesini realizations 
$\cR_1$ 
and 
$\cR_2$, 
centred at 
$\ul{Y}_1$
and
$\ul{Y}_2$, 
respectively, which are described by the tuples
$(L_1,D^1,C^1,\ul{\bA}^1,\ul{\bB}^1)$
and
$(L_2,D^2,C^2,\ul{\bA}^2,\ul{\bB}^2)$,
it is very tempting to consider
$\cR$ 
to be a nc Fornasini--Marchesini realization of 
$R$ 
centred at
$\ul{Y}_1\oplus\ul{Y}_2$, 
where 
$\cR$
is described by
\begin{align*}
D=D^1\oplus D^2,\,C=C^1\oplus C^2,\,
\bA_k=\begin{bmatrix}
\bA_k^1&0\\
0&\bA_k^2\\
\end{bmatrix}
\text{ and }
\bB_k=\begin{bmatrix}
\bB_k^1\\
\bB_k^2\\
\end{bmatrix},\,
1\le k\le d
\end{align*}
but we only know that
$R(\ul{X})=\cR(\ul{X})$
whenever 
$\ul{X}\in 
dom_{s_1+s_2}(R)$
is of the form
$\ul{X}=\ul{X}^{(1)}
\oplus \ul{X}^{(2)}$, 
with 
$\ul{X}^{(1)}\in dom_{s_1}(R_1)$
and
$\ul{X}^{(2)}\in dom_{s_2}(R_2)$.
\end{remark}
\section{Realizations of NC Rational Functions}
\label{sec:main}
From the previous section (cf. Theorem 
\ref{thm:2May16b}) 
we know that--- given a nc rational function--- all of its minimal nc Fornasini--Marchesini realizations which are centred at 
\textbf{the same point}, must have the same 
domain (and 
$\cA-$domain) and same evaluation (w.r.t 
$\cA$ as well; here $\cA$ 
is a unital stably finite
$\KK-$algebra). 

In this section we continue to establish 
connections between \textbf{all} minimal nc Fornasini--Marchesini realizations (with centres of all possible sizes)
of a given nc rational function. 
Using Lemma
\ref{thm:22Oct17c} 
and Remark 
\ref{rem:18Nov18b}, 
the general case--- where the two 
centres of minimal realizations of a 
rational function are different--- is 
considered and solved, which then will 
lead us to the main conclusion, that is Theorem
\ref{thm:30Jan18c}.
\begin{lemma}
\label{lem:30Jan18b}
Let 
$R_1$
and
$R_2$
be nc rational expressions in
$x_1,\ldots,x_d$
over
$\KK$,
with
$\ul{Y}_1\in dom_{s_1}(R_1)$
and
$\ul{Y}_2\in dom_{s_2}(R_2)$, where $s_1,s_2\in\NN$.
Suppose 
$\cR_1$ 
and 
$\cR_2$ 
are minimal nc Fornasini--Marchesini realizations of 
$R_1$
and
$R_2$, centred at 
$\ul{Y}_1$ 
and 
$\ul{Y}_2$, 
respectively. If 
$R_1$ 
and 
$R_2$ 
are 
$(\KK^d)_{nc}-$evaluation equivalent, then 
\begin{align}
\label{eq:26Mar19c}
DOM_{pm}(\cR_1)
=DOM_{pm}(\cR_2)
\text{ and }\cR_1(\ul{X})=
\cR_2(\ul{X})
\end{align}
for every 
$m\in\NN$ 
and 
$\ul{X}\in 
DOM_{pm}(\cR_1)$,
where 
$p=l.c.m(s_1,s_2)$.
Moreover, for any unital stably finite 
$\KK-$algebra 
$\cA$ 
and
$\ul{\fa}\in\cA^d$:
$$I_{s_1}\otimes\ul{\fa}
\in DOM^{\cA}(\cR_1) \iff
I_{s_2}\otimes\ul{\fa}
\in DOM^{\cA}(\cR_2)$$
and for every such 
$\ul{\fa}$, 
we have 
$$I_{s_2}\otimes
\cR_1^{\cA}(I_{s_1}\otimes\ul{\fa})
=I_{s_1}\otimes
\cR_2^{\cA}(I_{s_2}\otimes \ul{\fa}).$$
\end{lemma}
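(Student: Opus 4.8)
The plan is to reduce everything to two minimal realizations centred at the \emph{same} point and then apply the similarity theorem \ref{thm:2May16b}, using the centre‑change machinery of Theorem \ref{thm:22Oct17c} (and Remark \ref{rem:18Nov18b}) to get there and back. Write $p=l.c.m(s_1,s_2)$ and $n_i=p/s_i$. Since $dom(R_i)$ is closed under direct sums, $I_{n_i}\otimes\ul Y_i=\ul Y_i^{\oplus n_i}\in dom_p(R_i)$, and by Theorem \ref{thm:22Oct17c}(2) together with Remark \ref{rem:18Nov18b} the realization $\cR_i^{(n_i)}$ obtained from $\cR_i$ is a minimal realization of $R_i$ centred at $I_{n_i}\otimes\ul Y_i$ whose defining determinant condition at each level $pm$ is \emph{literally} that of $\cR_i$; hence $DOM_{pm}(\cR_i)=DOM_{pm}(\cR_i^{(n_i)})$ with $\cR_i=\cR_i^{(n_i)}$ there, and for any unital stably finite $\cA$ and $\ul\fa\in\cA^d$ one has $I_{s_i}\otimes\ul\fa\in DOM^{\cA}(\cR_i)\iff I_p\otimes\ul\fa\in DOM^{\cA}(\cR_i^{(n_i)})$ with $I_{n_i}\otimes\cR_i^{\cA}(I_{s_i}\otimes\ul\fa)=(\cR_i^{(n_i)})^{\cA}(I_p\otimes\ul\fa)$. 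Thus it suffices to prove the analogous statements for $\cR_1^{(n_1)},\cR_2^{(n_2)}$: equality of $DOM_{pm}$ with equal evaluations for all $m$, and $I_p\otimes\ul\fa\in DOM^{\cA}(\cR_1^{(n_1)})\iff I_p\otimes\ul\fa\in DOM^{\cA}(\cR_2^{(n_2)})$ with matching $\cA$‑evaluations.

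Next I would produce a common centre \emph{point}. As in the proof of Cohn's theorem \ref{thm:22Oct18a}, choose $\ell\in\NN$ and $\ul Z\in dom_{p\ell}(R_1)\cap dom_{p\ell}(R_2)$ (take $\ell=1$ when $\KK$ is infinite, since $dom_p(R_i)$ are nonempty Zariski‑open; otherwise argue as in \cite[Remark 2.6]{KV3}). Passing from $\cR_i^{(n_i)}$ to $(\cR_i^{(n_i)})^{(\ell)}$ via Theorem \ref{thm:22Oct17c}(2) gives a minimal realization of $R_i$ centred at $I_\ell\otimes(I_{n_i}\otimes\ul Y_i)\in dom_{p\ell}(R_i)$, and then the explicit change‑of‑centre construction (\ref{eq:21Mar19b}) produces a minimal realization $\cR_i^\flat$ of $R_i$ centred at $\ul Z$. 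At every step I would record, via Remark \ref{rem:18Nov18b} (and the symmetry argument in the proof of Theorem \ref{thm:2May16b}, needed to upgrade the inclusions coming from the centre change into equalities), the matching of $DOM_{p\ell m}$ and of the evaluations of consecutive realizations, where for the $(\cR_i^{(n_i)})^{(\ell)}$ step the correspondence of arguments is $\ul X\mapsto I_\ell\otimes\ul X$ on $DOM_{pm}(\cR_i^{(n_i)})$; likewise for the $\cA$‑domains, with $I_p\otimes\ul\fa\mapsto I_{p\ell}\otimes\ul\fa$.

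Now $\cR_1^\flat,\cR_2^\flat$ are minimal, hence controllable and observable by Theorem \ref{thm:9May19a}, realizations of the $(\KK^d)_{nc}$‑evaluation equivalent $R_1,R_2$, both centred at $\ul Z$; Theorem \ref{thm:2May16b} gives $DOM_{p\ell m}(\cR_1^\flat)=DOM_{p\ell m}(\cR_2^\flat)$ with $\cR_1^\flat=\cR_2^\flat$ there, and $DOM^{\cA}(\cR_1^\flat)=DOM^{\cA}(\cR_2^\flat)$ with equal $\cA$‑evaluations. Chaining back through the previous paragraph yields $DOM_{p\ell m}(\cR_1^{(n_1)})=DOM_{p\ell m}(\cR_2^{(n_2)})$ with matching evaluations for every $m$, and $I_\ell\otimes(\cR_1^{(n_1)})^{\cA}(I_p\otimes\ul\fa)=I_\ell\otimes(\cR_2^{(n_2)})^{\cA}(I_p\otimes\ul\fa)$ as soon as one side is defined; since $I_\ell\otimes M=I_\ell\otimes M'$ forces $M=M'$, this settles the $\cA$‑claim for $\cR_1^{(n_1)},\cR_2^{(n_2)}$ at the point $I_p\otimes\ul\fa$. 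Finally I would descend from level $p\ell m$ to all levels $pm$: for $\ul X\in(\KK^{pm\times pm})^d$ the matrix $I-\sum_k(\text{shift of }I_\ell\otimes X_k)\bA_k$ defining membership of $I_\ell\otimes\ul X$ in $DOM_{p\ell m}(\cR_i^{(n_i)})$ equals $I_\ell\otimes(\text{the corresponding matrix at }\ul X)$, so $\det(I_\ell\otimes A)=(\det A)^\ell$ gives $I_\ell\otimes\ul X\in DOM_{p\ell m}(\cR_i^{(n_i)})\iff\ul X\in DOM_{pm}(\cR_i^{(n_i)})$ and $\cR_i^{(n_i)}(I_\ell\otimes\ul X)=I_\ell\otimes\cR_i^{(n_i)}(\ul X)$; combining with the previous equalities and $I_\ell\otimes M=I_\ell\otimes M'\Rightarrow M=M'$ yields $DOM_{pm}(\cR_1^{(n_1)})=DOM_{pm}(\cR_2^{(n_2)})$ with equal evaluations, hence $DOM_{pm}(\cR_1)=DOM_{pm}(\cR_2)$ and $\cR_1=\cR_2$ there. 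For the $\cA$‑assertion, the equivalences from the first and third paragraphs give $I_{s_1}\otimes\ul\fa\in DOM^{\cA}(\cR_1)\iff I_{s_2}\otimes\ul\fa\in DOM^{\cA}(\cR_2)$, and $I_{n_1}\otimes\cR_1^{\cA}(I_{s_1}\otimes\ul\fa)=(\cR_1^{(n_1)})^{\cA}(I_p\otimes\ul\fa)=(\cR_2^{(n_2)})^{\cA}(I_p\otimes\ul\fa)=I_{n_2}\otimes\cR_2^{\cA}(I_{s_2}\otimes\ul\fa)$; tensoring on the left with $I_g$, where $g=\gcd(s_1,s_2)$ so that $gn_1=s_2$ and $gn_2=s_1$, upgrades this to $I_{s_2}\otimes\cR_1^{\cA}(I_{s_1}\otimes\ul\fa)=I_{s_1}\otimes\cR_2^{\cA}(I_{s_2}\otimes\ul\fa)$, which is the claim.

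The hard part will not be any single calculation but the careful bookkeeping of domains through the chain of realizations: turning the ``$\subseteq$'' that the centre‑change constructions directly provide into genuine equalities (this is where the symmetry of Theorem \ref{thm:2May16b} and Remark \ref{rem:18Nov18b} are essential), and then descending from the level $p\ell$ at which a common centre is forced to exist over an arbitrary (possibly finite) field back down to every level $pm$ — together with the final $I_g\otimes(\cdot)$ reconciliation, which is unavoidable because the $\cA$‑statement is phrased asymmetrically in $s_1$ and $s_2$.
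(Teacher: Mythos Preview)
Your proposal is correct and follows essentially the same route as the paper: pass to minimal realizations of a common size via Theorem \ref{thm:22Oct17c}/Remark \ref{rem:18Nov18b}, recentre at a common point in $dom_{p\ell}(R_1)\cap dom_{p\ell}(R_2)$ using (\ref{eq:21Mar19b}), apply the similarity theorem \ref{thm:2May16b}, and chain back. The only cosmetic differences are that the paper goes to size $p\ell$ in a single tensoring step (taking $n_k$ with $p\ell=s_kn_k$) rather than your two steps $p\to p\ell$, and that you make the final reconciliation $I_{n_1}\otimes(\cdot)=I_{n_2}\otimes(\cdot)\Rightarrow I_{s_2}\otimes(\cdot)=I_{s_1}\otimes(\cdot)$ explicit via the $I_g$ tensor, whereas the paper just asserts it from $s_1n_1=s_2n_2$.
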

\begin{proof}
We know that 
$dom_{s_1}(R_1), 
dom_{s_2}
(R_2)\ne\emptyset$, 
hence
$dom_p(R_1),dom_p(R_2)\ne\emptyset$ 
and as they are both open Zariski sets in 
$(\KK^{\ptp})^d$, 
there exists 
$\ell\in\NN$ 
such that 
$dom_{\ell p}(R_1)\cap 
dom_{\ell p}(R_2)\ne\emptyset$,
so let us fix 
$$\wt{\ul{Y}}\in
dom_{\ell p}(R_1)\cap dom_{\ell p}(R_2)
\subseteq  
DOM_{\ell p}
(\cR_1)\cap 
DOM_{\ell p}(\cR_2).$$
Once again (as in the proof of Theorem \ref{thm:22Oct18a}), the reasoning for that is clear when $\KK$
is infinite (with $\ell=1$), while if $\KK$
is finite we use a similar argument as in
\cite[Remark 2.6]{KV3}. 

$\bullet$ Let
$n_1$
and
$n_2$ 
be the integers for which
$p\ell=s_1n_1=s_2n_2$.
From Remark 
\ref{rem:18Nov18b}, 
there exist minimal nc Fornasini--Marchesini realizations
$\cR^{(n_k)}$
of 
$R_k$,
centred at
$I_{n_k}\otimes \ul{Y}_k$,
such that
\begin{equation*}
DOM_{p\ell m}(\cR_k)=
DOM_{p\ell m}\big(\cR^{(n_k)}\big)
\text{ and }
\cR^{(n_k)}(\ul{X})=
\cR_k(\ul{X}),
\,\forall \ul{X}\in DOM_{p\ell m}(\cR_k)\\
\end{equation*}
for every $m\in\NN,\,
\ul{\fA}\in DOM^{\cA}(\cR_k)\iff
I_{n_k}\otimes\ul{\fA}\in DOM^{\cA}
\big(\cR^{(n_k)}\big)$
and
$$\big(\cR^{(n_k)}\big)^{\cA}(I_{n_k}\otimes\ul{\fA})
=I_{n_k}\otimes
\cR_k^{\cA}(\ul{\fA}),\,
\forall\ul{\fA}\in DOM^{\cA}(\cR_k),$$
for $k=1,2$.
In addition, there exist minimal nc Fornasini--Marchesini realizations 
$\wt{\cR_k}$ 
of
$R_k$,
centred at 
$\wt{\ul{Y}}$, 
such that
$$
DOM_{p\ell m}\big(\cR^{(n_k)}\big)
=DOM_{p\ell m}\big(\wt{\cR_k}\big)
\text{ and }\cR^{(n_k)}(\ul{X})
=\wt{\cR_k}(\ul{X}),\,
\forall \ul{X}\in 
DOM_{p\ell m}
\big(\wt{\cR_k}\big)$$
for every
$m\in\NN,\,
DOM^{\cA}\big(\cR^{(n_k)}\big)
=DOM^{\cA}\big(\wt{\cR_k}\big)$ and
$$\big(\cR^{(n_k)}\big)^{\cA}(\ul{\fA})=
\wt{\cR_k}^{\cA}(\ul{\fA}),
\,\forall
\ul{\fA}\in DOM^{\cA}
\big(\cR^{(n_k)}\big),$$
for
$k=1,2$.
Therefore
$\wt{\cR_1}$ 
and 
$\wt{\cR_2}$
are minimal nc Fornasini--Marchesini 
realizations, both centred at 
$\wt{\ul{Y}}$, 
of 
$R_1$ 
and 
$R_2$--- 
which are 
$(\KK^d)_{nc}-$evaluation equivalent---
hence Theorem
\ref{thm:2May16b} 
implies 
$$DOM_{p\ell m}\big(\wt{\cR_1}\big)
=DOM_{p\ell m}\big(\wt{\cR_2}\big)
\text{ and }\wt{\cR_1}(\ul{X})
=\wt{\cR_2}(\ul{X}),
\forall \ul{X}\in 
DOM_{p\ell m}\big(\wt{\cR_1}\big)$$
for every $m\in\NN$ and
\begin{align*}
DOM^{\cA}\big(\wt{\cR_1}\big)
=DOM^{\cA}\big(\wt{\cR_2}\big)
\text{ and }
\wt{\cR_1}^{\cA}(\ul{\fA})=
\wt{\cR_2}^{\cA}(\ul{\fA}),
\,\forall\ul{\fA}\in DOM^{\cA}
\big(\wt{\cR_1}\big),
\end{align*}
which yield that
\begin{align}
\label{eq:26Mar19a}
DOM_{p\ell m}(\cR_1)
=DOM_{p\ell m}\big(\wt{\cR_1}\big)
=DOM_{p\ell m}\big(\wt{\cR_2}\big)
=DOM_{p\ell m}(\cR_2)
\end{align}
for every 
$m\in\NN$
and 
\begin{align}
\label{eq:26Mar19b}
\cR_1(\ul{X})=\wt{\cR_1}(\ul{X})
=\wt{\cR_2}(\ul{X})=\cR_2(\ul{X}),\,
\forall \ul{X}\in DOM_{p\ell m}(\cR_1).
\end{align}
It is easily seen that
$\ul{X}\in DOM_{pm}(\cR_k)\iff 
I_{\ell}\otimes\ul{X}\in DOM_{p\ell m}(\cR_k)$
and in that case 
$\cR_k(I_{\ell}\otimes\ul{X})=I_{\ell}\otimes \cR_k(\ul{X})$,
where $k=1$
or
$k=2$ and thus, from
(\ref{eq:26Mar19a})
and
(\ref{eq:26Mar19b}) 
one can get
(\ref{eq:26Mar19c}).
 
$\bullet$ Moreover,
$$DOM^{\cA}\big(\cR^{(n_1)}\big)
=DOM^{\cA}\big(\wt{\cR_1}\big)
=DOM^{\cA}\big(\wt{\cR_2}\big)
=DOM^{\cA}\big(\cR^{(n_2)}\big)$$ 
implies that for every 
$\ul{\fa}\in\cA^d$,
\begin{multline*}
I_{s_1}\otimes\ul{\fa}
\in DOM^{\cA}(\cR_1)\iff 
I_{n_1}\otimes(I_{s_1}\otimes\ul{\fa})
\in DOM^{\cA}\big(\cR^{(n_1)}\big)
\iff
\\ I_{n_2}\otimes (I_{s_2}\otimes\ul{\fa})
\in DOM^{\cA}\big(\cR^{(n_2)}\big)
\iff I_{s_2}\otimes\ul{\fa}\in DOM^{\cA}(\cR_2)
\end{multline*}
and for every such 
$\ul{\fa}$:
\begin{multline*}
I_{n_1}\otimes
\cR_1^{\cA}(I_{s_1}\otimes\ul{\fa})
=\big(\cR^{(n_1)}\big)^{\cA}
(I_{p\ell}\otimes\ul{\fa})
=\wt{\cR_1}^{\cA}(I_{p\ell}
\otimes\ul{\fa})
\\=\wt{\cR_2}^{\cA}(I_{p\ell}
\otimes\ul{\fa})
=\big(\cR^{(n_2)}\big)^{\cA}
(I_{p\ell}\otimes\ul{\fa})=
I_{n_2}\otimes 
\cR_2^{\cA}(I_{s_2}\otimes\ul{\fa}),
\end{multline*}
which then, as $s_1n_1=s_2n_2$, implies  
$I_{s_2}\otimes
\cR_1^{\cA}(I_{s_1}\otimes\ul{\fa})
=I_{s_1}\otimes
\cR_2^{\cA}(I_{s_2}\otimes\ul{\fa})$.
\end{proof}
\begin{remark}
\label{rem:26Mar19a}
If 
$R_1$
and
$R_2$
are 
$(\KK^d)_{nc}-$evaluation equivalent nc (non-degenerate) rational expressions in 
$x_1,\ldots,x_d$
over
$\KK$, 
as explained in the beginning of 
the proof, there exists
$\wt{\ul{Y}}\in dom_{\ell p}
(R_1)\cap dom_{\ell p}(R_2)$
for some 
$\ell\in\NN$, 
thus Theorem
\ref{thm:2May16b} implies that 
$L_{R_1}(\ell p)=L_{R_2}(\ell p)$  
and hence
$$\fm(R_1)=\frac{L_{R_1}(\ell p)}{\ell p}
=\frac{L_{R_2}(\ell p)}{\ell p}=\fm(R_2).$$
Therefore, we define the
\textbf{McMillan degree of a nc rational function}
$\fR$
to be 
$\fm(\fR):=\fm(R)$ 
for every 
$R\in\fR$.
\end{remark}
Recall that a nc rational function 
$\fR$ 
is an equivalence class of the form
$$\fR=\{R: R \text{ 
is a non-degenerate representative of 
}\fR\}$$ 
whose elements are   
$(\KK^d)_{nc}-$evaluation equivalent nc rational expressions in 
$x_1,\ldots,x_d$ over
$\KK$, whereas the
domain and $\cA-$domain of regularity of 
$\fR$  
are given by
\begin{align*}
dom(\fR)=
\bigcup_{R\in\fR} dom(R)
\text{ and }
dom^{\cA}(\fR)=\bigcup_{R\in\fR} dom^{\cA}(R),
\end{align*} 
respectively.
We now use Corollary
\ref{cor:13Feb18a}
and Lemma
\ref{lem:30Jan18b}, 
to show that the domain of regularity of
a nc rational function 
$\fR$ 
at the level of 
$\ntn$
matrices, i.e.,
$$dom_n(\fR)=\bigcup_{R\in\fR} dom_n(R),$$
lives inside the domain of \textbf{any} minimal nc 
Fornasini--Marchesini realization of a representative in
$\fR$, 
up to a tensor product with the identity matrix.
\begin{theorem}
\label{thm:30Jan18c}
Let 
$\fR\in\KK\plangle x_1,\ldots,x_d\prangle$ 
be a nc rational function. For every 
nc rational expression
$R\in\fR$, 
integer
$s\in\NN$,
point 
$\ul{Y}\in dom_{s}(\fR)$,
minimal nc Fornasini--Marchesini realization 
$\cR$ 
centred at 
$\ul{Y}$ 
of 
$R$,
and unital stably finite 
$\KK-$algebra
$\cA$,
we have the following properties:
\begin{itemize}
\item[1.]
If 
$\ul{Z}\in dom_n(\fR)$, 
then
$I_s\otimes \ul{Z}\in DOM_{sn}(\cR)$
and
$I_s\otimes
\fR(\ul{Z})=\cR(I_s\otimes \ul{Z}).$
\item[2.]
If 
$s\mid n$,  
then
$dom_{n}(\fR)\subseteq DOM_{n}(\cR)$ 
and 
$\fR(\ul{Z})=\cR(\ul{Z})$ for every 
$\ul{Z}\in dom_{n}(\fR)$.
\item[3.]
If
$\ul{\fa}\in dom^{\cA}(\fR)$,
then 
$I_s\otimes\ul{\fa}\in DOM^{\cA}(\cR)$
and 
$I_s\otimes
\fR^{\cA}(\ul{\fa})
=\cR^{\cA}(I_s\otimes\ul{\fa})$.
\end{itemize}

\end{theorem}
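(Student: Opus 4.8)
The three parts all follow by combining the two black boxes already available: Corollary~\ref{cor:13Feb18a} (a minimal realization of a nc rational expression is a realization w.r.t.\ any unital stably finite $\KK$-algebra, and in particular allows evaluation at every point of $dom_n(R)$) and Lemma~\ref{lem:30Jan18b} (two minimal realizations of $(\KK^d)_{nc}$-evaluation equivalent expressions, possibly centred at points of different sizes, agree on the common level $p=\mathrm{l.c.m.}(s_1,s_2)$). The key point is that $\ul Z\in dom_n(\fR)$ means $\ul Z\in dom_n(R')$ for \emph{some} representative $R'\in\fR$, not necessarily the chosen $R$; so the task is to transfer information from an arbitrary minimal realization of $R'$ centred at an arbitrary point of its domain, over to the given minimal realization $\cR$ of $R$ centred at $\ul Y$.

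First I would prove part~1. Given $\ul Z\in dom_n(\fR)$, pick $R'\in\fR$ with $\ul Z\in dom_n(R')$; thus $\ul Z\in dom^{\cA_n}(R')$ for the stably finite algebra $\cA_n=\KK^{\ntn}$. By Corollary~\ref{cor:13Feb18a} applied to $R'$, any minimal realization $\cR'$ of $R'$ centred at any $\ul Y'\in dom_{s'}(R')$ satisfies $I_{s'}\otimes\ul Z\in DOM^{\cA_n}(\cR')$, together with $I_{s'}\otimes R'^{\cA_n}(\ul Z)=\cR'^{\cA_n}(I_{s'}\otimes\ul Z)$ and (via Proposition~\ref{prop:18Oct18a}, i.e.\ the statement $\ul Z\otimes I_{s'}\in DOM_{s'n}(\cR')$ and $R'(\ul Z)\otimes I_{s'}=\cR'(\ul Z\otimes I_{s'})$ that Corollary~\ref{cor:13Feb18a} packages) a genuine matrix evaluation $\cR'(\ul Z\otimes I_{s'})=R'(\ul Z)\otimes I_{s'}$. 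Now Lemma~\ref{lem:30Jan18b} applied to $R$ and $R'$ (both in $\fR$, hence $(\KK^d)_{nc}$-evaluation equivalent) with centres $\ul Y$ of size $s$ and $\ul Y'$ of size $s'$ gives, with $p=\mathrm{l.c.m.}(s,s')$, that $DOM_{pm}(\cR)=DOM_{pm}(\cR')$ and $\cR=\cR'$ on that common domain for all $m$. Choosing the matrix $\ul X$ of the appropriate size built from $\ul Z\otimes I_{s'}$ (and filling in tensor factors so the size is a multiple of $p$), one deduces $I_s\otimes\ul Z\in DOM_{sn}(\cR)$ and $\cR(I_s\otimes\ul Z)=I_s\otimes\fR(\ul Z)$, using also that $\fR(\ul Z)=R'(\ul Z)$ by definition of $dom_n(\fR)$ and that both $\cR$ and $\cR'$ respect the extension of the realization from the $s$- (resp.\ $s'$-) block level to arbitrary multiples, which commutes with tensoring by an identity (exactly as in Remark~\ref{rem:18Nov18b}). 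The slightly delicate accounting here is matching the block structures: $\cR$ is extended to $sn\times sn$ matrices by acting on its $s\times s$ blocks, and one must check that $\cR$ evaluated at $I_s\otimes\ul Z$ agrees with $\cR'$ evaluated at $I_{s'}\otimes\ul Z$ after an appropriate shuffle, which is precisely the content of the $\cA_n$-vs-matrix comparison in Proposition~\ref{prop:18Oct18a} together with Lemma~\ref{lem:30Jan18b}.

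Part~2 is then immediate: if $s\mid n$, write $n=sq$; for $\ul Z\in dom_n(\fR)$ pick $R'\in\fR$ with $\ul Z\in dom_n(R')$. Applying part~1's machinery but now at the level of the realization's own domain rather than after tensoring — concretely, since $\cR$ is a realization of $R$ and $R$ is $(\KK^d)_{nc}$-evaluation equivalent to $R'$, Lemma~\ref{lem:30Jan18b} with the two realizations centred at $\ul Y$ (size $s$) and a point of $R'$'s domain guarantees $\cR$ has the maximal domain among all realizations of expressions in $\fR$ centred at points whose sizes divide $n$ — we get $dom_n(R')\subseteq DOM_n(\cR)$ (the size $n$ being a multiple of $s$) and $\cR=R'=\fR$ there. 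Taking the union over $R'\in\fR$ gives $dom_n(\fR)\subseteq DOM_n(\cR)$ and $\fR(\ul Z)=\cR(\ul Z)$ for all $\ul Z\in dom_n(\fR)$. Part~3 follows the identical pattern with the matrix algebra $\cA_n$ replaced by the given stably finite $\cA$: for $\ul{\fa}\in dom^{\cA}(\fR)$ choose $R'\in\fR$ with $\ul{\fa}\in dom^{\cA}(R')$, apply Corollary~\ref{cor:13Feb18a} to a minimal realization $\cR'$ of $R'$ to get $I_{s'}\otimes\ul{\fa}\in DOM^{\cA}(\cR')$ and $I_{s'}\otimes\fR^{\cA}(\ul{\fa})=\cR'^{\cA}(I_{s'}\otimes\ul{\fa})$, then invoke the $\cA$-domain statement of Lemma~\ref{lem:30Jan18b}: $I_s\otimes\ul{\fa}\in DOM^{\cA}(\cR)\iff I_{s'}\otimes\ul{\fa}\in DOM^{\cA}(\cR')$ with $I_{s'}\otimes\cR^{\cA}(I_s\otimes\ul{\fa})=I_s\otimes\cR'^{\cA}(I_{s'}\otimes\ul{\fa})$, and cancel the identity tensor factors to conclude $I_s\otimes\fR^{\cA}(\ul{\fa})=\cR^{\cA}(I_s\otimes\ul{\fa})$.

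The main obstacle, as I see it, is not conceptual but bookkeeping: one must be careful that ``$\fR(\ul Z)$'' is well defined (independent of the representative $R'$ chosen), which is exactly the $(\KK^d)_{nc}$-evaluation equivalence built into the definition of a nc rational function together with the fact — guaranteed by Lemma~\ref{lem:30Jan18b} — that the two realizations agree not merely formally but on all common points of their (possibly enlarged) matrix domains; and that the tensor-identity manipulations ($I_s\otimes(-)$ versus $(-)\otimes I_s$ versus the shuffle $P_1(-)P_1^{-1}$) are composed correctly across the different block sizes $s$, $s'$, $p$, $n$. Once the size-matching is set up via $p=\mathrm{l.c.m.}(s,s')$ and $\ell$ chosen so that $dom_{\ell p}(R)\cap dom_{\ell p}(R')\neq\emptyset$ (which exists by the PI-ring fact cited before Lemma~\ref{rem:18Nov18a}, or directly since domains are nonempty Zariski-open, cf.\ the argument in the proof of Theorem~\ref{thm:22Oct18a}), everything reduces to the already-proved Corollary~\ref{cor:13Feb18a} and Lemma~\ref{lem:30Jan18b}.
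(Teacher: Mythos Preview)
Your proposal is correct and follows the same overall strategy as the paper: both parts reduce to Corollary~\ref{cor:13Feb18a} (existence of a minimal realization that is also a realization w.r.t.\ any stably finite algebra) combined with Lemma~\ref{lem:30Jan18b} (transfer between minimal realizations centred at points of possibly different sizes).

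There is one simplification in the paper's argument that is worth adopting. For parts~1 and~2 you pick an auxiliary representative $R'\in\fR$ with $\ul Z\in dom_n(R')$ and then take a minimal realization $\cR'$ centred at an \emph{arbitrary} $\ul Y'\in dom_{s'}(R')$; this forces you into the ``filling in tensor factors so the size is a multiple of $p=\mathrm{l.c.m.}(s,s')$'' step, because $s'n$ need not be a multiple of $p$. The paper instead centres the auxiliary realization at $\ul Z$ itself, i.e.\ takes $s'=n$. Then $p=\mathrm{l.c.m.}(s,n)$ automatically divides $sn$, so $I_s\otimes\ul Z\in dom_{sn}(\wt R)\subseteq DOM_{sn}(\wt\cR)=DOM_{sn}(\cR)$ directly from Lemma~\ref{lem:30Jan18b}, and likewise for part~2 one has $p=n$ when $s\mid n$, giving $\ul Z\in DOM_n(\wt\cR)=DOM_n(\cR)$ without any extra tensoring. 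This eliminates all of the shuffle-matrix and block-structure bookkeeping you (rightly) flag as delicate. Your approach goes through once the extra identity factor is cancelled via the observation $\ul X\in DOM_{sm}(\cR)\iff I_k\otimes\ul X\in DOM_{skm}(\cR)$, but the paper's choice of centre makes that step unnecessary. For part~3 your argument is essentially identical to the paper's.
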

\begin{proof}
Let 
$\ul{Z}\in dom_n(\fR)$, 
so there exists a nc rational expression 
$\wt{R}\in\fR$ 
such that 
$\ul{Z}\in 
dom_n(\wt{R})$, 
while
Corollary
\ref{cor:13Feb18a}
implies the existence of a minimal nc Fornasini--Marchesini realization 
$\wt{\cR}$ 
of
$\wt{R}$, 
centred at 
$\ul{Z}$. 
Then 
$\cR$ 
and 
$\wt{\cR}$
are minimal nc Fornasini--Marchesini realizations of 
$R$ 
and 
$\wt{R}$, 
with centres in 
$(\KK^{\sts})^d$ 
and 
$(\KK^{\ntn})^d$, 
respectively.
Since 
$R,\wt{R}\in\fR$, 
it follows that 
$R$ 
and
$\wt{R}$ 
are
$(\KK^d)_{nc}-$evaluation equivalent, therefore Lemma 
\ref{lem:30Jan18b} 
guarantees that 
\begin{align}
\label{eq:15Apr19c}
DOM_{pm}\big(\wt{\cR}\big)
=DOM_{pm}(\cR)
\text{ and }
\wt{\cR}(\ul{X})=\cR(\ul{X}),\,
\forall \ul{X}\in 
DOM_{pm}(\cR)
\end{align}
for every 
$m\in\NN$,
where 
$p=l.c.m(s,n)$. 
Thus, 
$p\mid sn$ 
implies that
\begin{align*}
\ul{Z}\in dom_n\big(\wt{R}\big)
\Longrightarrow 
I_s\otimes \ul{Z}\in dom_{sn}
\big(\wt{R}\big)
\subseteq DOM_{sn}\big(\wt{\cR}\big)
=DOM_{sn}(\cR)
\end{align*}
and hence 
$$I_s\otimes 
\fR(\ul{Z})=
I_s\otimes 
\wt{R}(\ul{Z})=
\wt{R}(I_s\otimes \ul{Z})=
\wt{\cR}(I_s\otimes \ul{Z})
=\cR(I_s\otimes \ul{Z}),$$
which ends the proof of part 
\textbf{1}.

$\bullet$ Suppose next that 
$s\mid n$; 
in that case we have  
$p=l.c.m(s,n)=n$. 
Thus, in view of 
(\ref{eq:15Apr19c}) 
with 
$m=1$,
if 
$\ul{Z}\in dom_n\big(\wt{R}\big)$,
then
$$
\ul{Z}\in
DOM_n\big(\wt{\cR}\big)=DOM_n(\cR) \text{ 
and } 
\fR(\ul{Z})=\wt{R}(\ul{Z})
=\wt{\cR}(\ul{Z})=\cR(\ul{Z}),$$
which ends the proof of part 
\textbf{2}.

$\bullet$ 
Finally, let 
$\ul{\fa}\in dom^{\cA}(\fR)$, 
then there exists a non-degenerate nc rational expression
$\wh{R}\in\fR$
such that 
$\ul{\fa}\in dom^{\cA}\big(\wh{R}\big)$. As
$\wh{R}$
is non-degenerate, there exists 
$t\in\NN$
such that 
$dom_t\big(\wh{R}\big)\ne\emptyset$.
Let
$\wh{\ul{Y}}\in dom_t(\wh{R})$ 
and apply Corollary
\ref{cor:13Feb18a}: so there exists 
a minimal realization
$\wh{\cR}$ 
of 
$\wh{R}$, centred at 
$\wh{\ul{Y}}$,
such that
$$\ul{\fa}\in dom^{\cA}\big(\wh{R}\big)
\Longrightarrow I_t\otimes\ul{\fa}\in DOM^{\cA}
\big(\wh{\cR}\big)
\text{ and }
\wh{\cR}^{\cA}(I_t\otimes\ul{\fa})
=I_t\otimes 
\wh{R}^{\cA}(\ul{\fa}).$$
As 
$\cR$ 
and
$\wh{\cR}$ 
are minimal nc Fornasini--Marchesini realizations of 
$R\in\fR$
and
$\wh{R}\in\fR$, 
respectively,
Lemma 
\ref{lem:30Jan18b} 
guarantees that
$$I_t\otimes\ul{\fa}\in DOM^{\cA}
\big(\wh{\cR}\big)
\Longrightarrow I_s\otimes\ul{\fa}\in DOM^{\cA}(\cR)$$ 
and also that
$$I_t\otimes
\cR^{\cA}(I_s\otimes\ul{\fa})
=I_s\otimes 
\wh{\cR}^{\cA}(I_t\otimes\ul{\fa})
=I_s\otimes 
\big(I_t\otimes 
\wh{R}^{\cA}(\ul{\fa})\big),$$
thus 
$\cR^{\cA}(I_s\otimes\ul{\fa})=
I_s\otimes
\wh{R}^{\cA}(\ul{\fa})
=I_s\otimes
\fR^{\cA}(\ul{\fa})$.
\end{proof}
What we proved is that
$$dom_{sm}(\fR)\subseteq 
DOM_{sm}(\cR),
\,\forall m\in\NN$$
however in the case where 
$s=1$ and 
$\ul{Y}=(0,\ldots,0)$,
the nc Fornasini-Marchesini realization 
$\cR$ 
is actually a 
$1\times 1$ 
matrix valued nc rational expression 
(not a priori possible
if 
$s>1$);
by viewing 
$\fR$
as a 
$1\times 1$ 
matrix valued nc rational function (cf. Remark 
\ref{rem:27Mar19a}), 
it follows that the nc Fornasini--Marchesini realization
$\cR$ 
is a representative of 
$\fR$ 
and therefore
$$dom_{m}(\fR)\supseteq 
DOM_{m}(\cR),
\,\forall m\in\NN.$$
In other words, by applying Theorem 
\ref{thm:30Jan18c}
we actually obtain a proof for Theorem 
\ref{thm:realization} from the introduction which--- 
unlike the
original proof in 
\cite{KV4}---
does not make any use of the difference-differential calculus of nc functions.
\begin{cor}
\label{cor:14Oct18a}
If 
$\fR$
is a  nc rational function of 
$x_1,\ldots,x_d$ 
over
$\KK$
and 
$\fR$ 
is regular at 
$\ul{0}$, 
then
$\fR$ 
admits a unique 
(up to unique similarity) minimal
(observable and controllable) nc Fornasini--Marchesini realization
$$\fR(x_1,\ldots,x_d)=D+C
\Big(I_L-
\sum_{k=1}^d A_kx_k\Big)^{-1}
\sum_{k=1}^d 
B_kx_k,$$
where 
$A_1,\ldots,A_d\in\KK^{L\times L} 
,\,B_1,\ldots,B_d\in\KK^{L\times 1},\, 
C\in\KK^{1\times L},\,
D=\fR(\ul{0})$ 
and 
$L\in\NN$,
$$dom_{m}(\fR)=\big\{
(X_1,\ldots,X_d)\in
(\KK^{\mtm})^d:
\det\left(I_{Lm}-X_1\otimes A_1-\ldots-
X_d\otimes A_d\right)\ne0\big\}$$
for every 
$m\in\NN$ and 
\begin{equation*}
\fR(X_1,\ldots,X_d)=
I_m\otimes D+(I_m\otimes C)
\Big( I_{mL}-\sum_{k=1}^d
X_k\otimes A_k\Big)^{-1}\sum_{k=1}^d
X_k\otimes B_k
\end{equation*}
for every 
$(X_1,\ldots,X_d)\in dom_m(\fR)$.
\end{cor}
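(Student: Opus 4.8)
The plan is to obtain Corollary~\ref{cor:14Oct18a} as the special case $s=1$, centre $\ul{Y}=\ul{0}$, of the machinery of Sections~\ref{sec:real} and~\ref{sec:main}, the one extra ingredient being Remark~\ref{rem:22Sep18a}: a realization centred at a scalar point is itself a bona fide ($1\times1$ matrix valued) nc rational expression.

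First I would fix a representative. Since $\fR$ is regular at $\ul{0}$ we have $\ul{0}\in dom_1(\fR)$, so there is $R\in\fR$ with $\ul{0}\in dom_1(R)$. Applying Corollary~\ref{cor:13Feb18a} with $s=1$ and $\ul{Y}=\ul{0}$, the expression $R$ admits a minimal --- hence controllable and observable --- nc Fornasini--Marchesini realization $\cR$ centred at $\ul{0}$, unique up to unique similarity, which is in addition a realization of $R$ with respect to any unital stably finite $\KK$-algebra. Writing $A_k:=\bA_k(1)\in\KK^{L\times L}$, $B_k:=\bB_k(1)\in\KK^{L\times 1}$ and keeping $C\in\KK^{1\times L}$, $D\in\KK$, linearity of $\bA_k,\bB_k$ at the $1\times1$ level gives $\bA_k(x_k)=A_kx_k$ and $\bB_k(x_k)=B_kx_k$, so $\cR$ takes exactly the form written in the corollary. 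By Remark~\ref{rem:22Sep18a} this $\cR$ is a $1\times1$ matrix valued nc rational expression with $DOM_m(\cR)=dom_m(\cR)$ for every $m$; unwinding the blockwise extension of $\cR$ to $d$-tuples of $m\times m$ matrices, the identity $(X_k)\bA_k=X_k\otimes A_k$ (and similarly $(X_k)\bB_k=X_k\otimes B_k$) turns $DOM_m(\cR)$ and $\cR(\ul{X})$ into precisely the set and the formula displayed in the statement.

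Next I would pin down the two inclusions of domains. On one side, since $\cR$ is a realization of $R$ (Definition~\ref{def:25Sep18a}) it agrees with $R$ on $dom(R)$ and is therefore $(\KK^d)_{nc}$-evaluation equivalent to $R$; regarding $\fR$ as a $1\times1$ matrix valued nc rational function, $\cR$ is then a non-degenerate representative of $\fR$, whence $DOM_m(\cR)=dom_m(\cR)\subseteq dom_m(\fR)$ for every $m$. On the other side, because $s=1$ divides every $n$, part~\textbf{2} of Theorem~\ref{thm:30Jan18c} gives $dom_n(\fR)\subseteq DOM_n(\cR)$ together with $\fR(\ul{Z})=\cR(\ul{Z})$ for every $\ul{Z}\in dom_n(\fR)$. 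Combining the two inclusions, $dom_m(\fR)=DOM_m(\cR)$ and $\fR=\cR$ on this common domain for every $m\in\NN$, which is the asserted domain characterization and evaluation formula; in particular $D=\cR(\ul{0})=\fR(\ul{0})$. The uniqueness up to unique similarity is the uniqueness clause of Corollary~\ref{cor:13Feb18a} (equivalently Theorem~\ref{thm:2May16b}).

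I expect the only genuinely non-routine point to be the reverse inclusion $DOM_m(\cR)\subseteq dom_m(\fR)$: it is the single place where something beyond Theorem~\ref{thm:30Jan18c} is needed, namely the recognition via Remark~\ref{rem:22Sep18a} that a scalar-centred realization is an honest $1\times1$ matrix valued nc rational expression and thus a representative of $\fR$ --- without this, Theorem~\ref{thm:30Jan18c} supplies only one of the two inclusions. Everything else is elementary bookkeeping: passing from the operator form $\bA_k,\bB_k$ to the matrix form $A_k,B_k$, identifying the blockwise action $(X)\bA_k$ with $X\otimes A_k$, and verifying $D=\fR(\ul{0})$, all straightforward since each $\bA_k,\bB_k$ is simply multiplication by a fixed matrix.
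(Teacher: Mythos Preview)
Your proposal is correct and follows essentially the same approach as the paper: specialize Theorem~\ref{thm:30Jan18c} to $s=1$, $\ul{Y}=\ul{0}$ to get the inclusion $dom_m(\fR)\subseteq DOM_m(\cR)$, and then obtain the reverse inclusion by observing that a scalar-centred realization is itself a $1\times1$ matrix valued nc rational expression and hence a representative of $\fR$. The paper makes exactly this argument in the paragraph preceding the corollary, citing Remark~\ref{rem:27Mar19a} where you cite Remark~\ref{rem:22Sep18a}; the two remarks address complementary halves of the same point.
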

\section{Realizations of Matrix Valued NC Rational Functions}
\label{sec:mvf}
All of the analysis and results up to now can be generalized to the settings
of matrix valued nc rational functions. In this section we describe the relevant definitions and main results in the matrix valued case.

If 
$\alpha,\beta\in\NN$, 
we say that 
$\fr$ 
is a 
$\alpha\times\beta$
\textbf{matrix valued nc rational function} if 
$\fr$
is a 
$\alpha\times\beta$ 
matrix of nc rational functions, i.e., if
$$\fr=
\begin{bmatrix}
\fR_{ij}
\end{bmatrix}_{1\le i\le\alpha,\, 
1\le j\le\beta}$$
where 
$\fR_{ij}$ are nc rational functions. 
The domain of regularity of 
$\fr$ 
is then defined by
\begin{align}
\label{eq:2Mar19b}
dom(\fr):=\bigcap _{1\le i\le\alpha,\,1\le j\le\beta}
dom(\fR_{ij})
\end{align}
and for every 
$\ul{X}\in dom_n(\fr)$ 
the evaluation 
$\fr(\ul{X})$ 
is given by
$$\fr(\ul{X}):=
E(n,\alpha)
\begin{bmatrix}
\fR_{ij}(\ul{X})
\end{bmatrix}_{1\le i\le\alpha,\,
1\le j\le \beta}
E(n,\beta)^T,$$
where
$E(\ell_1,\ell_2)\in
\KK^{\ell_1\ell_2\times\ell_1\ell_2}$
defined in 
(\ref{eq:24Jan19a}). 
The need for the correction terms, which are shuffle matrices, is coming simply because otherwise evaluating 
$\fr$ 
term by term does not yield a nc function (it does not preserve direct sums), see e.g. 
\cite[pp. 17--18]{KV3}.
If
$\cA$
is a unital $\KK-$algebra, then the $\cA-$domain of 
$\fr$
is defined by
\begin{align*}
dom^{\cA}(\fr):=
\bigcap_{1\le i\le\alpha,\,1\le j\le\beta}
dom^{\cA}(\fR_{ij})
\end{align*}
and for every
$\ul{\fa}\in dom^{\cA}(\fr)$
the evaluation $\fr^{\cA}(\ul{\fa})$
is given by
$$\fr^{\cA}(\ul{\fa}):=
\begin{bmatrix}
\fR_{ij}^{\cA}(\ul{\fa})\\
\end{bmatrix}_{1\le i\le\alpha,\,
1\le j\le\beta}.$$
\begin{remark}
\label{rem:27Mar19a}
In 
\cite{KV4}
the authors define matrix valued nc rational functions and their domains of regularity using equivalence classes of matrix valued nc rational expressions.  
However, it follows from
\cite[Lemma 3.9]{V1}
that  one can also define matrix valued nc rational functions
as a matrix of nc rational functions, and the domains of regularity in both cases are equal.
\end{remark}
\begin{theorem}
\label{thm:7Nov18a}
For every 
$\alpha\times\beta$
matrix valued nc rational function 
$$\fr=
\begin{bmatrix}
\fR_{ij}
\end{bmatrix}_{1\le i\le\alpha,\, 
1\le j\le\beta}
\text{ and } 
\ul{Y}=(Y_1,\ldots,Y_d)\in dom_s(\fr),$$
there exist unique (up to unique similarity)
$L\in\NN$,\,
$D\in\KK^{\alpha s\times\beta s},\,
C\in\KK^{\alpha s\times L},$
linear mappings
$\bA_1,\ldots,\bA_d:\KK^{\sts}\rightarrow
\KK^{L\times L}$
and
$\bB_1,\ldots,\bB_d:
\KK^{\sts}\rightarrow
\KK^{L\times \beta s}$ 
such that
$(\ul{\bA},\ul{\bB})$
is controllable and
$(C,\ul{\bA})$ 
is observable,
for which
\begin{align}
\label{eq:2Mar19a}
dom_{sm}(\fr)\subseteq \Big\{
\ul{X}\in(\KK^{sm\times sm})^d: 
\det\Big(I_{Lm}-\sum_{k=1}^d
(X_k-I_m\otimes Y_k)\bA_k
\Big)\ne0\Big\}
\end{align}
for every $m\in\NN$
and 
$\fr(\ul{X})=\cR(\ul{X})$ 
for every
$\ul{X}\in dom_{sm}(\fr)$, 
where
\begin{align*}
\cR(\ul{X})=I_m\otimes D+(I_m\otimes C)
\Big(I_{Lm}-\sum_{k=1}^d
(X_k-I_m\otimes Y_k)\bA_k\Big)^{-1}
\sum_{k=1}^d
(X_k-I_m\otimes Y_k)\bB_k.
\end{align*} 
Moreover,
$$dom_n(\fr)\subseteq 
\Big\{
\ul{X}\in(\KK^{n\times n})^d: 
\det\Big(I_{nL}-\sum_{k=1}^d
(I_s\otimes X_k-I_n\otimes Y_k)\bA_k
\Big)\ne0\Big\}$$
for every
$n\in\NN$
and 
$I_s\otimes\fr(\ul{X})=\cR(I_s\otimes\ul{X})$ 
for every
$\ul{X}\in dom_n(\fr)$.
Furthermore,
for any unital stably finite 
$\KK-$algebra $\cA$:
\begin{align*}
dom^{\cA}(\fr)\subseteq 
\Big\{\ul{\fa}\in\cA^d:
\Big(I_L\otimes 1_{\cA}-
\sum_{k=1}^d (I_s\otimes \fa_k-Y_k\otimes 1_{\cA})\bA_k^{\cA}\Big)
\text{ is invertible in }\cA^{L\times L}
\Big\}
\end{align*}
and 
$I_s\otimes\fr^{\cA}(\ul{\fa})
=\cR^{\cA}(I_s\otimes\ul{\fa})$ 
for every 
$\ul{\fa}\in dom^{\cA}(\fr)$, i.e.,
$$I_s\otimes 
\fr^{\cA}(\ul{\fa})=D\otimes 1_{\cA}+(C\otimes 1_{\cA})
\Big( I_L\otimes 1_{\cA}-\sum_{k=1}^d
(I_s\otimes\fa_k-Y_k\otimes 1_{\cA})\bA_k^{\cA} 
\Big)^{-1}
\sum_{k=1}^d
(I_s\otimes \fa_k-Y_k\otimes 1_{\cA})\bB_k^{\cA}.$$
\end{theorem}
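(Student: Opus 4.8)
The plan is to rerun the arguments of Sections~\ref{sec:real} and~\ref{sec:main} with scalars systematically replaced by rectangular matrices, the key observation being that throughout those sections the matrices $D$, $C$, $\bB_k$ enter only through their sizes and never through squareness, while the operators $\bA_k$ and the controllability/observability subspaces are untouched by the passage from the scalar to the matrix valued setting.

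First I would establish the matrix valued analogue of Theorem~\ref{thm:21Ap17a} (existence by synthesis). Given $\ul Y\in dom_s(\fr)$, for each entry $\fR_{ij}$ pick a representative $R_{ij}$ with $\ul Y\in dom_s(R_{ij})$; then $r:=\sum_{i,j}\ul e_i\,R_{ij}\,\ul e_j^{\,T}$ is an $\alpha\times\beta$ matrix valued nc rational expression, obtained from matrix constants and the $1\times1$ expressions $(x_k)$ by the matrix operations $+,\cdot,(\cdot)^{-1}$, with $\ul Y\in dom_s(r)$ and $r(\ul X)=\fr(\ul X)$ on $dom(r)$. Running synthesis on $r$: the base cases are matrix constants $K\in\KK^{p\times q}$ ($L=1$, $D=I_s\otimes K$, $C=0$, $\ul\bA=\ul\bB=0$) and the scalar monomials ($L=s$, $D=Y_k$, $C=I_s$, $\bB_k=Id$, as in~\eqref{eq:20Mar19b}); the series, parallel and inverse formulas~\eqref{eq:20Mar19c}, \eqref{eq:20Mar19d}, \eqref{eq:20Mar19e} carry over verbatim, the only changes being that $D,C,\bB_k$ acquire the appropriate rectangular shapes and that the inverse step is legitimate only when $\alpha=\beta$ and $r(\ul Y)=D$ is invertible. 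The re-sized version of~\eqref{eq:17Feb19a} again lets one move between levels, so one obtains a realization $\cR_0$ of $r$ over matrices of every size and w.r.t.\ every unital $\KK$-algebra.

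Next I would apply the Kalman decomposition of Theorem~\ref{thm:2May16a} to $\cR_0$ --- its proof only manipulates the $\bA_k$ and the subspaces $\cC_{\ul\bA,\ul\bB}$, $\obs{C}{\ul\bA}$, so it is insensitive to the shapes of $D,C,\bB_k$ --- producing a controllable and observable realization $\cR$ of $r$ with $dom_{sm}(r)\subseteq DOM_{sm}(\cR)$, matching evaluations, and the analogous statements over any unital stably finite $\KK$-algebra (stable finiteness entering exactly through Lemma~\ref{lem:19Oct18a}). For uniqueness I would reprove Theorem~\ref{thm:2May16b}: the restriction of $\fr$ to the nilpotent ball $Nilp(\ul Y)$ is a $\KK^{\alpha\times\beta}$-valued nc function (the shuffle correction in the definition of $\fr$ being precisely what makes it respect direct sums), so by \cite{KV1} it has a unique Taylor--Taylor expansion around $\ul Y$ whose coefficients, via the computation of Lemma~\ref{lem:8Nov18a}, are $D$ and the multilinear maps $C\cdot\ul\bA^\omega\cdot\bB_k$ with values in $\KK^{\alpha s\times\beta s}$. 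Two minimal realizations of $\fr$ therefore share $D$ and satisfy $C^1(\ul\bA^1)^\omega\bB_k^1=C^2(\ul\bA^2)^\omega\bB_k^2$, and the intertwining operator $\cT$ of~\eqref{eq:17Ap16a} is built and shown to be an isomorphism exactly as before, since the relevant steps use only controllability of $(\ul\bA^2,\ul\bB^2)$ and observability of $(C^i,\ul\bA^i)$; this gives unique similarity, hence ``minimal $\iff$ controllable and observable'' (Theorem~\ref{thm:9May19a}) and the maximality of the domain of a minimal realization (Lemma~\ref{lem:27Jan18a}) in the matrix valued case.

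Finally, to pass from the single representative $r$ to all of $dom(\fr)$, I would reprove the matrix valued versions of Lemma~\ref{lem:30Jan18b} and Theorem~\ref{thm:30Jan18c}: for $\ul Z\in dom_n(\fr)$ take a representative $\tilde r$ regular at $\ul Z$, build a minimal realization centred at $\ul Z$, and compare it with $\cR$ through a common centre $\wt{\ul Y}\in dom_{\ell p}(r)\cap dom_{\ell p}(\tilde r)$ (with $p=\mathrm{l.c.m}(s,n)$, existence by Zariski openness as in the proof of Theorem~\ref{thm:22Oct18a}) together with the change-of-centre construction~\eqref{eq:21Mar19b} and the inflation~\eqref{eq:21Mar19c}; this yields $dom_{sm}(\fr)\subseteq DOM_{sm}(\cR)$, the stated inclusion for $dom_n(\fr)$ after tensoring with $I_s$, and the corresponding $\cA$-statements for unital stably finite $\cA$. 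The main obstacle I anticipate is purely clerical: keeping track of the shuffle matrices $E(\cdot,\cdot)$ so that the block realization synthesised from $r=[R_{ij}]$ reproduces the shuffle-corrected evaluation $\fr(\ul X)=E(n,\alpha)[\fR_{ij}(\ul X)]E(n,\beta)^T$ of Section~\ref{sec:mvf} rather than the naive entrywise matrix --- the identity $I_s\otimes\ul e_i=E(s,\alpha)(\ul e_i\otimes I_s)$ makes the correction appear automatically, and once the conventions are fixed, equality on all of the common domain follows from the Taylor--Taylor uniqueness, both sides being nc functions that agree on the nilpotent ball.
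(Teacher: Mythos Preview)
Your proposal is correct, but takes a different route from the paper's proof. The paper does not rerun the synthesis and the machinery of Sections~\ref{sec:real}--\ref{sec:main} in the matrix valued setting; instead it applies the already-established scalar results entrywise. Concretely, for each $\fR_{ij}$ it invokes Theorem~\ref{thm:30Jan18c} to obtain a minimal scalar realization $\cR_{ij}$ centred at $\ul Y$, and then assembles these into one realization for $\fr$ by placing the $\bA_k^{ij}$ in block-diagonal form, stacking the $\bB_k^{ij}$, lining up the $C^{ij}$ in block rows, and inserting the shuffle corrections $E(s,\alpha),E(s,\beta)$ explicitly so that the resulting $\cR$ reproduces the shuffle-corrected evaluation of $\fr$. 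The block-diagonal structure of $\ul\bA$ makes the domain inclusion immediate. Only then does the paper minimise via Kalman and invoke the (matrix valued) analogue of Theorem~\ref{thm:2May16b} for uniqueness, the details of which it omits.

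Your approach---regenerating Theorems~\ref{thm:21Ap17a}, \ref{thm:2May16a}, \ref{thm:2May16b}, \ref{thm:22Oct17c}, Lemma~\ref{lem:30Jan18b} and Theorem~\ref{thm:30Jan18c} with rectangular $D,C,\bB_k$---is precisely what Remark~\ref{rem:20Mar19b} says can also be done. Its virtue is conceptual uniformity and the fact that it yields, along the way, matrix valued versions of all the intermediate results; its cost is that it repeats a great deal of work and forces you to manage the shuffle bookkeeping throughout the synthesis rather than in one explicit assembly step. The paper's approach is shorter because it reuses the scalar theory wholesale and confines the shuffle corrections to a single place, at the price of producing a visibly non-minimal intermediate realization (cf.\ Remark~\ref{rem:16Apr19a}) that then has to be cut down.
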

Similarity of nc Fornasini--Marchesini realizations of matrix valued nc rational functions is defined analogously to the case of (scalar) nc rational functions (cf. Theorem
\ref{thm:2May16b}),
as well as controllability and observability, which are defined via the controllability and un-observability subspaces of  
$\KK^L$ (cf. Definition \ref{def:25Jan19a} and notice
that the only difference is that the operators 
$\ul{\bA}^\omega\cdot \bB_k$ and
$C\cdot\ul{\bA}^{\omega}$
return matrices in 
$\KK^{L\times \beta s}$ and
$\KK^{\alpha s\times L}$, respectively).
\begin{proof}
Suppose 
$\fr=
\begin{bmatrix}
\fR_{ij}\\
\end{bmatrix}_{1\le i\le\alpha,\,1\le j\le\beta}$
is a 
$\alpha\times\beta$ 
matrix valued nc rational function and let
$\ul{Y}\in dom_{s}(\fr)$. 
For every 
$1\le i\le\alpha$ and
$\,1\le j\le\beta$ we have $
\ul{Y}\in dom_s(\fR_{ij})$,
while 
 applying Theorem 
\ref{thm:30Jan18c},
the nc rational function
$\fR_{ij}$ 
admits a minimal nc Fornasini--Marchesini realization  
$\cR_{ij}$ 
centred at
$\ul{Y}$, 
described by a tuple
$(L_{ij},D^{ij},C^{ij},
\ul{\bA}^{ij},\ul{\bB}^{ij}),$
which is also a realization of 
$\fR_{ij}$
w.r.t 
$\cA$.

$\bullet$ Let 
$\ul{X}\in dom_{sm}(\fr)$, 
then 
$\ul{X}\in dom_{sm}(\fR_{ij})$
and hence
$\ul{X}\in DOM_{sm}(\cR_{ij})$
and 
$\fR_{ij}(\ul{X})=\cR(\ul{X})$
for any
$1\le i\le\alpha$ 
and 
$1\le j\le\beta$.
Therefore
\begin{multline*}
\fr(\ul{X})=E(sm,\alpha)
\begin{bmatrix}
\fR_{11}&\ldots&\fR_{1\beta}\\
\vdots&&\vdots\\
\fR_{\alpha1}&\ldots&\fR_{\alpha\beta}\\
\end{bmatrix}(\ul{X})E(sm,\beta)^T
=E(sm,\alpha)
\begin{bmatrix}
\cR_{11}&\ldots&\cR_{1\beta}\\
\vdots&&\vdots\\
\cR_{\alpha1}&\ldots&\cR_{\alpha\beta}\\
\end{bmatrix}(\ul{X})
\\E(sm,\beta)^T
=I_m\otimes D+(I_m\otimes C)
\Big(I_{Lm}-\sum_{k=1}^d
(X_k-I_m\otimes Y_k)\bA_k\Big)^{-1}
\sum_{k=1}^d (X_k-I_m\otimes Y_k)\bB_k
=\cR(\ul{X}),
\end{multline*}
where the nc Fornasini--Marchesini realization
$\cR$ 
is described by
\begin{multline*}
L=\sum_{i=1}^\alpha\sum_{j=1}^\beta 
L_{ij},\, D=E(s,\alpha)
\begin{bmatrix}
D^{11}&\ldots&D^{1\beta}\\
\vdots&&\vdots\\
D^{\alpha1}&\ldots&D^{\alpha\beta}\\
\end{bmatrix}
E(s,\beta)^T
\in\KK^{\alpha s\times\beta s},\\
C=E(s,\alpha)
\begin{bmatrix}
C^{11}&\ldots&C^{1\beta}&0&\ldots&\ldots&\ldots&\ldots&0\\
0&\ldots&0&C^{21}&\ldots&C^{2\beta}&0&\ldots&0\\
\vdots&&\vdots&\ddots&\ddots&\ddots&\vdots&&\vdots\\
0&\ldots&\ldots&\ldots&\ldots&0&C^{\alpha1}&\ldots&C^{\alpha\beta}\\
\end{bmatrix}\in\KK^{\alpha s\times L},
\end{multline*}
with the linear mappings
\begin{align*}
\bA_k=
\begin{bmatrix}
\bA_k^{11}&\ldots&0&\ldots&\ldots&\ldots&0\\
\vdots&\ddots&&&&&\vdots\\
0&&\bA_k^{1\beta}&&&&\vdots\\
\vdots&&&\ddots&&&\vdots\\
\vdots&&&&\bA_k^{\alpha1}&&0\\
\vdots&&&&&\ddots&\vdots\\
0&\ldots&\ldots&\ldots&0&\ldots&\bA_k^{\alpha\beta}\\
\end{bmatrix},\, 
\bB_k=\begin{bmatrix}
\bB_k^{11}&\ldots&0\\
\vdots&\ddots&\vdots\\
0&\ldots&\bB_k^{1\beta}\\
\bB_k^{21}&\ldots&0\\
\vdots&\ddots&\vdots\\
0&\ldots&\bB_k^{2\beta}\\
\vdots&\vdots&\vdots\\
\vdots&\vdots&\vdots\\
\bB_k^{\alpha1}&\ldots&0\\
\vdots&\ddots&\vdots\\
0&\ldots&\bB_k^{\alpha\beta}\\
\end{bmatrix}E(s,\beta)^T.
\end{align*}
It is easily seen, by the diagonal structure of 
$\bA_k$, that
$$dom_{sm}(\fr)\subseteq
\bigcap_{1\le i\le\alpha,\,1\le j\le\beta} 
DOM_{sm}(\cR_{ij})=DOM_{sm}(\cR).$$

$\bullet$
Moreover, 
if
$\ul{X}\in dom_n(\fr)$, 
then 
$I_s\otimes\ul{X}\in dom_{sn}(\fr)$
and $
\fr(I_s\otimes\ul{X})=
I_s\otimes \fr(\ul{X})$, 
while by the first part of the theorem we know that
$I_s\otimes \ul{X}\in dom_{sn}(\cR)$ 
and
$$ I_s\otimes
\fr(\ul{X})
=\fr(I_s\otimes\ul{X})
=\cR(I_s\otimes\ul{X}).$$

$\bullet$ Furthermore, let
$\ul{\fa}\in dom^{\cA}(\fr)$, 
thus 
$\ul{\fa}\in dom^{\cA}(\fR_{ij})$
and therefore 
$I_s\otimes\ul{\fa}\in DOM^{\cA}(\cR_{ij})$ 
and
$\cR_{ij}^{\cA}
(I_s\otimes\ul{\fa})
=I_s\otimes 
\fR_{ij}^{\cA}(\ul{\fa})$,
for every
$1\le i\le\alpha$ and 
$1\le j\le\beta$. 
A direct and careful computation--- which is omitted--- shows that
\begin{multline*}
\cR^{\cA}(I_s\otimes\ul{\fa})=
\left(E(s,\alpha)\otimes 1_{\cA}\right)
\begin{bmatrix}
\cR_{ij}^{\cA}(I_s\otimes\ul{\fa})\\
\end{bmatrix}_{1\le 
i\le\alpha,\,1\le j\le\beta}
\left(E(s,\beta)\otimes 1_{\cA}\right)
\\=\left(E(s,\alpha)\otimes 1_{\cA}\right)
\begin{bmatrix}
I_s\otimes 
\fR_{ij}^{\cA}(\ul{\fa})\\
\end{bmatrix}_{1\le i\le\alpha,1\le j\le\beta}
\left(E(s,\beta)\otimes 1_{\cA}\right)
=I_s\otimes \fr^{\cA}(\ul{\fa}).
\end{multline*}

$\bullet$ 
This proves the existence of a nc Fornasini--Marchesini realization for 
$\fr$, 
centred at
$\ul{Y}$, 
that is also a realization of  
$\fr$ 
w.r.t  
$\cA$.
To obtain a minimal nc Fornasini--Marchesini realization, we use the Kalman decomposition same as in Lemma 
\ref{thm:2May16b}, corresponding to the
controllability and un-observability subspaces of  
$\KK^L$, whereas
the uniqueness (up to unique similarity) of such a minimal realization is proved with the same ingredients as in the proof of Theorem 
\ref{thm:2May16b}. The details of the proofs of the Kalman decomposition and the uniqueness are omitted.
\end{proof}
\begin{remark}
\label{rem:16Apr19a}
It is not hard to see that the nc Fornasini--Marchesini realization built in the proof is not necessarily minimal, even if 
$\cR_{ij}$ 
are all minimal nc Fornasini--Marchesini realizations. However, the opposite is true, i.e., if 
$\cR$ 
is a minimal nc Fornasini--Marchesini realization, then all of the nc Fornasini--Marchesini realizations 
$\cR_{ij}$ 
must be minimal as well.
\end{remark}
\begin{remark}
\label{rem:20Mar19b} 
(cf. Remark 
\ref{rem:27Mar19a})
The proof of the existence part in Theorem
\ref{thm:7Nov18a} can be done using matrix valued nc rational expressions and the usual process of synthesis, yielding
(\ref{eq:2Mar19a}) 
for the a priori bigger domain of 
$\fr$, 
which uses matrix valued nc rational expressions and once again by
\cite[Lemma 3.9]{V1} 
is equal to the domain in the sense of 
(\ref{eq:2Mar19b}).
\end{remark}
\begin{remark}[McMillan degree of a matrix valued nc rational function]
\label{rem:27Mar19b}
One can prove analogous versions of Theorem 
\ref{thm:22Oct17c}
and Remark
\ref{rem:26Mar19a},
for matrix valued nc rational expressions and functions, thereby there exists an integer $\fm(\fr)$ 
such that for any 
$\ul{Y}\in dom_s(\fr)$, 
we have 
$$L_{\fr}(\ul{Y})=
s\cdot\fm(\fr);$$ here 
$L_{\fr}(\ul{Y})$ 
is the dimension of a minimal nc Fornasini--Marchesini realization of $\fr$, centred at
$\ul{Y}$.
We call
$\fm(\fr)$ 
the 
\textbf{McMillan degree of }$\fr$.
It follows from Theorem 
\ref{thm:7Nov18a} that
$$\fm(\fr)=\frac{L_{\fr}(\ul{Y})}{s}\le \frac{\sum_{i=1}^\alpha \sum_{j=1}^\beta
L_{ij}}{s}=\sum_{i=1}^\alpha
\sum_{j=1}^\beta
\fm(\fR_{ij}).$$
\end{remark}
\section{Realizations of Hermitian NC Rational Functions}
\label{sec:hermitian}
In the case where 
$\KK=\RR$ or 
$\KK=\CC$, 
one often considers symmetric or hermitian nc rational expressions, 
specially with applications to free probability
\cite{BMS,HMS,Sp3} 
and in optimization theory
\cite{AK,HKMV,HM,HMcCV}.
Unlike the case of descriptor realizations (see 
\cite{HMS,KV1}), 
the expression for
$\cR_{\cF\cM}(\ul{X}^*)^*$ 
does not have the form  of a nc Fornasini--Marchesini realization, for a nc Fornasini--Marchesini realization $\cR_{\cF\cM}$. Nevertheless, we can use our methods to obtain an analogue of  Corollary 
\ref{cor:13Feb18a}
in the case where the function 
$\fR$
is \textbf{hermitian}, i.e., when
\begin{align*}
\fR^*(\ul{X}):=\fR(\ul{X}^*)^*=\fR(\ul{X})
\text{ for all }\ul{X}\in dom(\fR),
\end{align*}
with the matrix pencil to be  inverted having 
\textbf{hermitian coefficients}. 
We also get explicit (necessary and sufficient) 
conditions on the coefficients of the 
realization for the nc rational function to be hermitian.
\begin{remark}
One can define hermitian nc rational 
functions more precisely. First, 
one needs to define--- using synthesis--- a nc 
rational expression 
$R^*$, 
for any nc rational expression 
$R$. Then, one can 
show that 
$R_1^*\sim R_2^*$, 
whenever 
$R_1\sim R_2$ 
(i.e., whenever 
$R_1$ 
and
$R_2$ 
are
$(\KK^d)_{nc}-$evaluation equivalent). 
Finally, for every nc rational function 
$\fR$, 
let 
$\fR^*=\{R^*:R\in\fR\}$
and define 
$\fR$ 
to be hermitian if 
$\fR^*=\fR$, 
as equivalence classes.
\end{remark}

We use the following notions: if
$\bT$ 
is a linear mapping on matrices, then 
$\bT^*$ 
is the linear mapping given by
$\bT^*(X):=
\bT(X^*)^*$ and 
$\bT$ is called hermitian if 
$\bT^*=\bT$.
If 
$J$
is a square matrix 
of the form
$$J=
\begin{bmatrix}
I_p&0&0\\
0&-I_q&0\\
0&0&0_t\\
\end{bmatrix},
\text{ with }p,q,t\ge0,$$ 
then we say that 
$J$
is a \textbf{semi-signature} matrix; notice that if 
$t=0$, then 
$J$
is a signature matrix.
\begin{theorem}
\label{thm:22Feb19a}
Let
$\fR$
be an hermitian nc rational function
of
$x_1,\ldots,x_d$
over
$\KK$,
$\ul{Y}^*=\ul{Y}\in dom_s(\fR)$ 
and 
\begin{equation*}
\cR_{\cF\cM}(\ul{X})=D+C
\Big( I_{L}-\sum_{k=1}^d
\bA_k(X_k-Y_k)\Big)^{-1}
\sum_{k=1}^d
\bB_k(X_k-Y_k)
\end{equation*}
be a minimal nc Fornasini--Marchesini
realization 
of $\fR$, 
centred at
$\ul{Y}$.  
\begin{enumerate}
\item[1.]
There exists a unique
$S=S^*\in\KK^{L\times L}$
such that
\begin{align}
\label{eq:8Dec18d}
D^*=D,\,
\bA_k^*\cdot S=S\cdot\bA_k,\,
\bB_k^*\cdot S=C\cdot\bA_k
\text{ and }C\cdot\bB_k=(C\cdot\bB_k)^*,
\, 1\le k\le d.
\end{align}
\item[2.]
Once the relations in 
(\ref{eq:8Dec18d}) 
hold, we have
\begin{align}
\label{eq:8Dec18e}
\ker(S)=\bigcap_{1\le k\le d,
\, X\in\KK^{\sts}} 
\ker \big(\bA_k(X)\big),\, 
\Ima(S)=\bigvee_{1\le k\le d,
\,X\in\KK^{\sts}}\Ima
\left(\bA_k^*(X)\right)
\end{align}
and
$$\ker(C^*)=\bigcap_{1\le k\le d,\,
X\in\KK^{\sts}} \ker
\big(\bB_k(X)\big).$$
\item[3.] \textbf{Symmetry of the minimal nc Fornasini--Marchesini realization:}
There exist 
$\check{C}\in\KK^{s\times L}$,
a semi-signature matrix 
$J\in\KK^{L\times L}$
and hermitian linear mappings
$\check{\bA_1},\ldots,\check{\bA_d}:
\KK^{\sts}
\rightarrow\KK^{L\times L}$,
such that
\begin{align}
\label{eq:1Feb19a}
\cR_{\cF\cM}(\ul{X})=D+\check{C}
\Big( I_L-\sum_{k=1}^d
\check{\bA_k}(X_k-Y_k)J
\Big)^{-1}\sum_{k=1}^d 
\check{\bA_k}(X_k-Y_k)\check{C}^*,
\end{align}
with
$(\ul{\check{\bA}}\cdot J,
\ul{\check{\bA}}\cdot\check{C}^*)$ 
controllable and
$(\check{C},\ul{\check{\bA}}\cdot J)$
observable. 
Conversely, if 
$\fR$ 
admits a realization of the form 
(\ref{eq:1Feb19a}) with the controllability and observability conditions, then $\fR$ is an hermitian nc rational function.\\
\item[4.]
\textbf{Hermitian nc descriptor realization:}
There exist 
$D_{\cD}=D_{\cD}^*\in\KK^{\sts},\,
C_{\cD}\in\KK^{s\times (L+s)}$,
a signature matrix 
$J_{\cD}\in\KK^{(L+s)\times(L+s)}$
and hermitian linear mappings
$\bA_{1,\cD},\ldots,\bA_{d,\cD}:\KK^{\sts}\rightarrow\KK^{(L+s)\times(L+s)}$, 
such that
\begin{align}
\label{eq:1Feb19b}
\cR_{\cF\cM}(\ul{X})
=\cR_{\cD}(\ul{X}):=D_{\cD}+
C_{\cD}
\Big(J_{\cD}-\sum_{k=1}^d
\bA_{k,\cD}(X_k-Y_k)
\Big)^{-1}C_{\cD}^*
\end{align}
and 
\begin{multline*}
DOM_{s}(\cR_{\cF\cM})=DOM_{s}(\cR_{\cD})
:=\Big\{\ul{X}\in(\KK^{s\times s})^d:
\det\Big(J_{\cD}-\sum_{k=1}^d
\bA_{k,\cD}(X_k-Y_k)\Big)\ne0 \Big\}.
\end{multline*}
Moreover, similarly to Theorem 
\ref{thm:30Jan18c}, this also applies--- after a suitable tensoring---
to domains and evaluations on 
$\ntn$ 
matrices for all 
$n\in\NN$ 
and w.r.t any unital stably finite
$\KK-$algebra 
$\cA$. 
\item[5.]
The matrix $S$ is invertible
$\iff \bigcap_{1\le k\le d,\, X\in\KK^{\sts}} 
\ker \left(\bA_k(X)\right)=\{\ul{0}\}
\iff$ there exists 
$Q\in\KK^{L\times s}$ such that 
\begin{align}
\label{eq:9Jan19a}
\bB_k=\bA_k\cdot Q,
\,\forall 1\le k\le d.
\end{align} 
In that case,   
$J\in\KK^{L\times L}$
is invertible,
\begin{align}
\label{eq:4Jan19a}
\cR_{\cF\cM}(\ul{X})=D+\check{C}J
\Big(J-\sum_{k=1}^d \check{\bA_k}(X_k-Y_k)\Big)^{-1}
\sum_{k=1}^d \check{\bA_k}(X_k-Y_k)\check{C}^*, 
\end{align}
and
\begin{align}
\label{eq:1Feb19c}
\cR_{\cF\cM}(\ul{X})=
\wt{D}+\check{C}
\Big(J-\sum_{k=1}^d \wt{\bA_k}
(X_k-Y_k) \Big)^{-1}\check{C}^*,
\end{align}
where
$\wt{\bA_k}=J\check{\bA_k}J$
and
$\wt{D}=D-\check{C}J\check{C}^*$.
\end{enumerate}
\end{theorem}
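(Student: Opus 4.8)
The proof of Theorem \ref{thm:22Feb19a} is essentially an adaptation of the classical argument that identifies, for a minimal realization, the (unique) solution to the "symmetry" or "Lyapunov-type" equations dictated by the hermitian condition, and then uses a congruence to put the realization into a self-dual form. The plan is to proceed part by part: first establish existence and uniqueness of $S=S^*$ from minimality (parts 1--2), then diagonalize $S$ by congruence to get the semi-signature matrix $J$ and the symmetric form (part 3), then absorb an $s\times s$ block to pass to the descriptor form (part 4), and finally treat the nondegenerate case $S$ invertible separately (part 5).

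\textbf{Part 1 (existence and uniqueness of $S$).} Since $\fR$ is hermitian, $\cR_{\cF\cM}^*(\ul{X}):=\cR_{\cF\cM}(\ul{X}^*)^*$ restricts on the nilpotent ball $Nilp(\ul{Y})$ to the same nc function as $\cR_{\cF\cM}$; comparing Taylor--Taylor coefficients via Lemma \ref{lem:8Nov18a} gives $D^*=D$ and the identities
$$
C\,\bA_{i_1}(Z_1)\cdots\bA_{i_\ell}(Z_\ell)\,\bB_k(Z_{\ell+1})
=\bigl(C\,\bA_{k}(Z_{\ell+1}^*)\,\bA_{i_\ell}(Z_\ell^*)\cdots\bA_{i_1}(Z_1^*)\,\text{(tail)}\bigr)^*
$$
for all words and all $Z_j\in\KK^{\sts}$. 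Now I would define $S$ on the controllable subspace: for a generating vector $\bigl(\ul{\bA}^\omega\cdot\bB_k\bigr)(Z_1,\dots,Z_{|\omega|+1})\ul{u}$, set
$$
S\bigl((\ul{\bA}^\omega\cdot\bB_k)(Z_1,\dots,Z_{|\omega|+1})\ul{u}\bigr)
:=\bigl(C\,\overline{(\ul{\bA}^\omega)}(\ul{Z})\bigr)^*\,\text{(appropriate reversed-word expression)}\,\ol{\ul u}
$$
in the spirit of the map $\cT$ from the proof of Theorem \ref{thm:2May16b}; well-definedness and $\KK^L$ as domain follow from controllability, and the range landing in $(\text{un-observable})^{\perp}$-complement plus observability of $(C,\ul{\bA})$ forces $S=S^*$ after one checks the generating relations are respected. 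Uniqueness is again the controllability/observability rigidity argument verbatim from Theorem \ref{thm:2May16b}: two solutions differ by an operator annihilated on the left by all $C\ul{\bA}^\omega$ and on the right generated by all $\ul{\bA}^\omega\cdot\bB_k$, hence zero. The relations \eqref{eq:8Dec18d} are precisely the coefficient-level translation, and the main care needed is bookkeeping of word-reversal and the conjugate on the evaluation arguments $Z_j\mapsto Z_j^*$.

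\textbf{Parts 2--5 (kernels, congruence normal form, descriptor and nondegenerate cases).} Part 2 is a direct computation from \eqref{eq:8Dec18d}: the relation $\bB_k^*\cdot S=C\cdot\bA_k$ gives $\ker(S)\subseteq\bigcap_k\ker\bA_k(\cdot)$, while $\bA_k^*\cdot S=S\cdot\bA_k$ propagates $\ker S$ through words and observability of $(C,\ul{\bA})$ upgrades this to equality (using that $\ker S$ is $\ul{\bA}$-invariant and sits inside the un-observable subspace which is $\{\ul 0\}$ — so in fact the reverse inclusion uses that a vector killed by all $\bA_k$ is killed by all $C\ul{\bA}^\omega\bA_k$, hence by $S$ via the third identity); taking adjoints/images is the dual statement, and $\ker(C^*)$ is read off from $\bB_k^*S=C\bA_k$ together with surjectivity of $S$ onto $\Ima S=\bigvee\Ima\bA_k^*$. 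For part 3 I would pick a congruence $T$ with $T^*ST = J = I_p\oplus(-I_q)\oplus 0_t$ a semi-signature matrix (possible since $S=S^*$; over $\CC$ use $*$-congruence, over $\RR$ Sylvester), set $\check{\bA}_k:=T^*\bA_k^* (T^*)^{-1}\cdot(\ldots)$ — more cleanly, define $\check\bA_k := (T^{-1})^*\,\bA_k^{(T)}$ where $\bA_k^{(T)}=T^{-1}\bA_k T$, and $\check C := C T$ — and verify from \eqref{eq:8Dec18d} that $\check\bA_k$ is hermitian, $\check\bA_k J = $ conjugate of $\bA_k$ in the new basis, $\bB_k$ becomes $\check\bA_k\check C^*$, etc., yielding \eqref{eq:1Feb19a}; controllability/observability of the transformed pencil is just the similarity invariance already used. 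The converse in part 3 is the easy direction: a realization of the form \eqref{eq:1Feb19a} manifestly satisfies $\cR(\ul X^*)^* = \cR(\ul X)$ by transposing the Neumann series and using $J=J^*$, $\check\bA_k$ hermitian. Part 4 is the standard Fornasini--Marchesini-to-descriptor trick: write the FM realization as a Schur complement of an $(L+s)\times(L+s)$ pencil, as in the proof of Theorem \ref{thm:21Ap17a} case 5, and check the resulting $\bA_{k,\cD}$ are hermitian and $J_\cD$ is a genuine signature (the extra $s$ block contributes $\pm I_s$); the domain equality is the Schur-complement invertibility statement, and tensoring up to $sm$ and to $\cA$ is the same extension mechanism as in Theorem \ref{thm:30Jan18c}. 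Part 5: $S$ invertible iff $\ker S=\{\ul0\}$ iff (by part 2) $\bigcap_k\ker\bA_k(\cdot)=\{\ul 0\}$; then $\bB_k^*S=C\bA_k$ and invertibility of $S$ let one solve $\bB_k = \bA_k\cdot Q$ with $Q=(S^{-1}C^*)$-type formula (checking this is consistent across $k$ uses $\bA_k^*S=S\bA_k$), and conversely \eqref{eq:9Jan19a} forces $\ker S$ into $\bigcap\ker\bA_k$; with $S$ invertible $J$ is a true signature matrix so $J^{-1}=J$, and \eqref{eq:4Jan19a}, \eqref{eq:1Feb19c} follow by substituting \eqref{eq:9Jan19a} into \eqref{eq:1Feb19a} and rearranging the Neumann series (a routine manipulation: $\sum\check\bA_k(X_k-Y_k)\check C^* = \sum\check\bA_k(X_k-Y_k)J\cdot J\check C^*$ and absorb, with $\wt D = D-\check C J\check C^*$ the boundary correction).

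\textbf{Expected main obstacle.} The crux is part 1: constructing $S$ correctly and proving it is \emph{symmetric} (not merely some intertwiner). The subtlety is that the hermitian hypothesis on $\fR$ only gives coefficient identities involving \emph{word reversal together with conjugation of the matrix arguments} $Z_j\mapsto Z_j^*$, and one must feed these through the $\cT$-style well-definedness argument without sign or transpose errors; in particular showing $S^*=S$ amounts to verifying that the map built from "reverse-and-conjugate" is an involution on the generators modulo observability. Once $S$ is in hand, everything downstream (parts 2--5) is linear algebra — congruence normal forms, Schur complements, and Neumann-series rearrangement — none of which presents a genuine difficulty beyond careful bookkeeping, and the tensoring/$\cA$-evaluation extensions are already packaged by Theorems \ref{thm:21Ap17a}, \ref{thm:2May16b} and \ref{thm:30Jan18c}.
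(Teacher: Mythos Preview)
Your overall plan matches the paper's: compare Taylor--Taylor coefficients to build the intertwiner $S$ via a $\cT$-style map on controllable generators (Part~1), read off kernels/images (Part~2), put $S$ in congruence normal form (Part~3), enlarge to a descriptor pencil (Part~4), and specialize when $S$ is invertible (Part~5). Two steps, however, are more delicate than your sketch suggests.

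\medskip
\textbf{Part~3: producing hermitian $\check{\bA}_k$.} Your proposed formula $\check{\bA}_k=(T^{-1})^*\,T^{-1}\bA_kT$ (or variants thereof) does \emph{not} give a hermitian operator in general, because $S$ need not be invertible --- $J$ is only a semi-signature matrix. The relations \eqref{eq:8Dec18d} by themselves do not let you factor $\bA_k$ through $S$. The paper's argument first shows, using Part~2 and observability, that $\begin{bmatrix}S\\ C\end{bmatrix}$ is left invertible; picking a left inverse $K$, it sets $\wh{\bA}_k:=K\cdot\begin{bmatrix}\bA_k^*\\ \bB_k^*\end{bmatrix}$, so that automatically $\bA_k=\wh{\bA}_k\cdot S$ and $\bB_k=\wh{\bA}_k\cdot C^*$, and then verifies $\wh{\bA}_k=\wh{\bA}_k^*$ (and independence of $K$) using all four relations in \eqref{eq:8Dec18d} simultaneously. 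Only after this step does the congruence $S=TJT^*$ yield hermitian $\check{\bA}_k=T^*\wh{\bA}_kT$ and $\check C=C(T^*)^{-1}$. Without the $\wh{\bA}_k$ construction, the symmetric form \eqref{eq:1Feb19a} does not drop out.

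\medskip
\textbf{Part~4: obtaining a genuine signature matrix $J_\cD$.} The ``standard FM-to-descriptor'' Schur-complement trick produces an $(L+s)\times(L+s)$ pencil whose constant block is $\begin{bmatrix}J&\check C^*\\ \check C&\ast\end{bmatrix}$, but since $J$ may be singular this matrix need not be invertible, so you cannot directly write it as a congruence of a signature matrix. The paper fixes this by choosing the lower-right block as $E=F+\check CJ\check C^*$ with $F\succ0$ (or $F\prec0$), and then proves --- using the left invertibility of $\begin{bmatrix}J\\ \check C\end{bmatrix}$, which in turn comes from Part~2 --- that $\wt S=\begin{bmatrix}J&\check C^*\\ \check C&E\end{bmatrix}$ is invertible. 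Only then does $\wt S^{-1}=\wt TJ_\cD\wt T^*$ with $J_\cD$ a true signature matrix, and the domain equality $DOM_s(\cR_{\cF\cM})=DOM_s(\cR_\cD)$ follows. Your sketch (``the extra $s$ block contributes $\pm I_s$'') misses this invertibility issue entirely.

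\medskip
A minor remark on Part~1: the paper's definition of $S$ does \emph{not} involve word reversal or conjugation of the $Z_j$ in the formula --- it simply sends $\ul{\bA}^\omega(W_1,\dots,W_k)\bB_j(Z)\ul u$ to $(\ul{\bA}^*)^\omega(W_1,\dots,W_k)\bA_j^*(Z)C^*\ul u$ with the \emph{same} word $\omega$; the reversal appears only in the computation verifying $S=S^*$. Your description with ``reversed-word expression'' and $\overline{\ul u}$ is at best imprecise and would make the well-definedness check harder than necessary.
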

\begin{proof}
1. The proof of the first part of the theorem follows the same ideas as the proof  of  Theorem 
\ref{thm:2May16b}.
As
$\fR^*=\fR$ we obtain that 
$\cR_{\cF\cM}^*=
\cR_{\cF\cM}$ 
and then compare the coefficients in 
the Taylor--Taylor power series expansions
$$\cR_{\cF\cM}(\ul{X})=\sum_{\nu\in\cG_d}
(\ul{X}-I_m\otimes\ul{Y})^{\odot_s\nu}\cR_{\nu}$$
and
$$\cR_{\cF\cM}(\ul{X})^*=
\sum_{\nu\in\cG_d}(\ul{X}-
I_m\otimes\ul{Y})^{\odot_s\nu}
\cR_{\nu,*},$$
we obtain that 
$\cR_\nu(Z_1,\ldots,Z_\ell)=\cR_{\nu,*}(Z_1,\ldots,Z_\ell)$ 
for every   
$\nu=g_{i_1}\ldots g_{i_\ell}\in\cG_d$
and
$Z_1,\ldots,Z_\ell\in\KK^{\sts}$, where
\begin{align*}
\cR_{\nu}(Z_1,\ldots,Z_\ell)=
\begin{cases}
D&: 
\text{ if }\ell=0 \\
C\bB_{i_1}(Z_1) &: \text{ if }\ell=1\\
C\bA_{i_1}(Z_1)\cdots\bA_{i_{\ell-1}}(Z_{\ell-1}) \bB_{i_\ell}(Z_\ell)&:\text{ if }\ell>1  \\
\end{cases}
\end{align*}
and
\begin{align*}
\cR_{\nu,*}(Z_1,\ldots,Z_\ell)=
\begin{cases}
D^*&:\text{ if }\ell=0\\
\bB_{i_1}^*(Z_1)C^*&:\text{ if }\ell=1\\
\bB_{i_1}^*(Z_1)
\bA_{i_2}^*(Z_2)
\cdots 
\bA_{i_\ell}^*(Z_\ell)C^*&: 
\text{ if }\ell>1.\\
\end{cases}
\end{align*}
For
$\nu=\emptyset$ we get that 
$D=D^*.$ 
Next, define a linear mapping 
\begin{align}
\label{eq:8Dec18a}
\cS\big(\ul{\bA}^\omega(W_1,\ldots,W_k)\bB_j(Z)\ul{u}
\big)=\left(\ul{\bA}^*\right)^{\omega}(W_1,\ldots,W_k)\bA_j^*(Z)C^*\ul{u}
\end{align}
for every
$\omega\in\cG_d$ 
of length 
$k\ge0,\,
1\le j\le d,\,
W_1,\ldots,W_k,\,
Z_j\in\KK^{\sts}$
and
$\ul{u}\in\KK^s$, 
then we extend 
$\cS$ 
by linearity.

$\bullet$ It is easily seen, from the controllability of
$(\ul{\bA},\ul{\bB})$ 
that 
$\cS:\KK^L\rightarrow\KK^L$ 
is well defined:
suppose
$$\ul{w}_1=\sum_{l\in I_L} 
\ul{\bA}^{\omega_l}
(\ul{W})
\bB_{j_l}(Z_l)\ul{u}_l=
\sum_{t\in I_T} 
\ul{\bA}^{\eta_t}
(\ul{Q})\bB_{i_t}(P_t)\ul{v}_t
=\ul{w}_2,$$
for
$\omega_l,\eta_t\in\cG_d,\,
\ul{W}= 
(W_1^{(l)},
\ldots,W_{k_l}^{(l)})\in (\KK^{\sts})^{k_l},\,
\ul{Q}=(Q_1^{(t)},
\ldots,Q^{(t)}_{m_t})\in(\KK^{\sts})^{m_t}
,\,Z_l,P_t\in\KK^{\sts},\,
1\le j_l,i_t\le d$
and
$\ul{u}_l,\ul{v}_t\in\KK^s$
for every
$l\in I_L$
and
$t\in I_T$.
Thus for every 
$1\le n\le d,\,\alpha\in\cG_d,\,
\wt{X}\in\KK^{\sts}$
and
$\ul{X}=(X_1,\ldots,
X_{|\alpha|})\in(\KK^{\sts})^{|\alpha|}$:
\begin{multline*}
\sum_{l\in I_L}\cR_{g_n\alpha\omega_lg_{j_l},*}
\big(\wt{X},\ul{X},\ul{W},Z_l\big)\ul{u}_l
=\sum_{l\in I_L}\cR_{g_n\alpha\omega_lg_{j_l}}
\big(\wt{X},\ul{X},\ul{W},Z_l\big)\ul{u}_l
=
C\bA_n(\wt{X})\ul{\bA}^{\alpha}
(\ul{X})\ul{w}_1\\=
C\bA_n(\wt{X})\ul{\bA}^{\alpha}
(\ul{X})\ul{w}_2=
\sum_{t\in I_T}
\cR_{g_n\alpha\eta_tg_{i_t}}
\big(\wt{X},\ul{X},\ul{Q},P_t
\big)\ul{v}_t
=\sum_{t\in I_T}\cR_{g_n\alpha
\eta_tg_{i_t},*}
\big(\wt{X},\ul{X},\ul{Q},
P_t\big)\ul{v}_t,
\end{multline*}
which implies that 
\begin{multline*}
\bB_n^*(\wt{X})(\ul{\bA}^*)^\alpha
(\ul{X})
\sum_{l\in I_L} 
(\ul{\bA}^*)^{\omega_l}
\big(\ul{W}\big)
\bA^*_{j_l}(Z_l)C^*\ul{u}_l
=\bB_n^*(\wt{X})(\ul{\bA}^*)^\alpha
(\ul{X})
\sum_{t\in I_T} 
(\ul{\bA}^*)^{\eta_t}
\big(\ul{Q}\big)
\bA^*_{i_t}(P_t)C^*\ul{v}_t,
\end{multline*}
whereas the controllability of 
$(\ul{\bA},\ul{\bB})$
implies that 
$\cS(\ul{w}_1)=\cS(\ul{w}_2)$.

$\bullet$ Let 
$S\in\KK^{L\times L}$ be the matrix such that 
$\cS(\ul{u})=S\ul{u}$ for every $\ul{u}\in\KK^L$.
We show that 
$S$
is self-adjoint:
for every 
$\ul{u},\ul{v}\in\KK^L$ we use the controllability of
$(\ul{\bA},\ul{\bB})$ to write them as
\begin{align*}
\ul{u}=\sum_{l\in I_L} 
\ul{\bA}^{\omega_l}
\big(\ul{W}\big)
\bB_{j_l}(Z_l)\ul{u}_l\text{ and }
\ul{v}=
\sum_{t\in I_T} 
\ul{\bA}^{\eta_t}
\big(\ul{Q}\big)\bB_{i_t}(P_t)\ul{v}_t,
\end{align*}
thus, using the notation 
$\ul{Q}^\#:=\big(Q_{m_t}^{(t)},\ldots,
Q_1^{(t)}\big),$
\begin{multline*}
\langle S\ul{u},\ul{v}\rangle=
\sum_{l\in I_L}\sum_{t\in I_T}
\left\langle S\ul{\bA}^{\omega_l}
\big(\ul{W}\big)
\bB_{j_l}(Z_l)\ul{u}_l,
\ul{\bA}^{\eta_t}
\big(\ul{Q}\big)\bB_{i_t}(P_t)\ul{v}_t\right\rangle
=\sum_{l\in I_L}\sum_{t\in I_T}
\langle (\ul{\bA}^*)^{\omega_l}
\big(\ul{W}\big)
\bA_{j_l}^*(Z_l)\\C^*\ul{u}_l,
\ul{\bA}^{\eta_t}
\big(\ul{Q}\big)\bB_{i_t}(P_t)\ul{v}_t
\rangle
=\sum_{l\in I_L}\sum_{t\in I_T}
\ul{v}_t^*\bB_{i_t}^*(P_t)(\ul{\bA}^*)^{\eta_t^T}
\big(\ul{Q}^\#\big)
(\ul{\bA}^*)^{\omega_l}
\big(\ul{W}\big)
\bA_{j_l}^*(Z_l)
C^*\ul{u}_l
\\=\sum_{l\in I_L}\sum_{t\in I_T} 
\ul{v}_t^* 
\cR_{g_{i_t}\eta_t^T\omega_l g_{j_l},*}
\big(P_t,\ul{Q}^\#,\ul{W},Z_l\big) \ul{u}_l
=\sum_{l\in I_L}\sum_{t\in I_T} 
\ul{v}_t^* \cR_{g_{i_t}\eta_t^T\omega_l g_{j_l}}
\big(P_t,\ul{Q}^\#,\ul{W},Z_l\big) \ul{u}_l
\\=\sum_{l\in I_L}\sum_{t\in I_T} 
\ul{v}_t^* C\bA_{i_t}(P_t)
\ul{\bA}^{\eta_t^T}
\big(\ul{Q}^\#\big)
\ul{\bA}^{\omega_l}
\big(\ul{W}
\big)\bB_{j_l}(Z_l)\ul{u}_l
=\sum_{l\in I_L}\sum_{t\in I_T} 
\langle 
\ul{\bA}^{\omega_l}\big(\ul{W}
\big)\bB_{j_l}(Z_l)\ul{u}_l,
(\ul{\bA}^*)^{\eta_t}
\\\big(\ul{Q}
\big)\bA_{i_t}^*(P_t)C^*\ul{v}_t
\rangle
=\sum_{l\in I_L}\sum_{t\in I_T} 
\langle \ul{\bA}^{\omega_l}\big(\ul{W}
\big)\bB_{j_l}(Z_l)\ul{u}_l,S\ul{\bA}^{\eta_t}
\big(\ul{Q}\big)\bB_{i_t}(P_t)\ul{v}_t
\rangle=\langle \ul{u},S\ul{v}\rangle,
\end{multline*}
i.e., $S=S^*$. 

$\bullet$ From 
(\ref{eq:8Dec18a}) it follows that 
\begin{align}
\label{eq:8Dec18b}
S\bB_k(Z)=\bA_k^*(Z)C^*,\,
\forall1\le k\le d,\,Z\in\KK^{\sts}
\end{align}
and for every 
$\ul{w}_1=\sum_{l\in I_L} 
\ul{\bA}^{\omega_l}
\big(\ul{W}\big)
\bB_{j_l}(Z_l)\ul{u}_l\in\KK^L$:
\begin{multline*}
S\bA_k(Z)\ul{w}_1=\sum_{l\in I_L} S\bA_k(Z)
\ul{\bA}^{\omega_l}
\big(\ul{W}\big)
\bB_{j_l}(Z_l)\ul{u}_l
=\sum_{l\in I_L}
\bA_k^*(Z)(\ul{\bA}^*)^{\omega_l}
\big(\ul{W}\big)
\bA_{j_l}^*(Z_l)C^*\ul{u}_l
\\=\bA_k^*(Z)\sum_{l\in I_L}S
\ul{\bA}^{\omega_l}
\big(\ul{W}\big)
\bB_{j_l}(Z_l)\ul{u}_l
=\bA_k^*(Z)S\ul{w}_1.
\end{multline*}
Once again, from the controllability of 
$(\ul{\bA},\ul{\bB})$ 
we have
\begin{align}
\label{eq:8Dec18c}
S\bA_k(Z)=\bA_k^*(Z)S,\,
\forall 1\le k\le d,\,
Z\in\KK^{\sts}.
\end{align}

$\bullet$
It is easily seen that every self-adjoint matrix $S$
which satisfies the relations in
(\ref{eq:8Dec18d}),
must satisfies
(\ref{eq:8Dec18a}) too. This implies the uniqueness of the matrix 
$S$.
\\\\
2. Suppose all the relations in 
(\ref{eq:8Dec18d})
hold.

$\bullet$ Let 
$\ul{v}\in\ker(S)$, thus
(\ref{eq:8Dec18c})
implies that 
$\ul{u}:=\bA_k(Z)\ul{v}\in\ker(S)$
while  
(\ref{eq:8Dec18b})
implies that 
$\ul{u}\in\ker(C)$. So 
$\ul{u}\in\ker(S)$ implies that 
$\bA^\omega(X_1,\ldots,
X_{|\omega|})\ul{u}\in \ker(C)$, 
but 
$(C,\ul{\bA})$ 
is observable, hence 
$\ul{u}=\ul{0}$
and
$\ul{v}\in \ker\bA_k(Z)$ 
for all 
$1\le k\le d$ 
and 
$Z\in\KK^{\sts}$. 

$\bullet$ On the other hand, if 
$\bA_k(Z)\ul{v}=\ul{0}$ 
for all 
$1\le k\le d$ 
and 
$Z\in\KK^{\sts}$, 
then 
$\bA_k^*(Z)S\ul{v}=\ul{0}$ 
and also 
$\bB_k(Z)S\ul{v}=\ul{0}$, 
whereas the controllability of
$(\ul{\bA},\ul{\bB})$ 
implies that 
$S\ul{v}=\ul{0}$.

$\bullet$ 
If 
$\bB_k(Z)\ul{v}=\ul{0}$ 
for all
$1\le k\le d$ 
and 
$Z\in\KK^{\sts}$, 
then 
$\bA_k^*(Z)C^*\ul{v}=\ul{0}$ 
and also 
$\bB_k^*(Z)C^*\ul{v}=\ul{0}$, 
which implies that 
$C^*\ul{v}=\ul{0}$.

$\bullet$ 
On the other hand, if 
$\ul{v}\in\ker(C^*)$, 
then 
$\bB_k(Z)\ul{v}\in\ker(C)$ 
and  
$S\bB_k(Z)\ul{v}=\ul{0}$, 
i.e., 
$\bB_k(Z)\ul{v}\in\ker(S)$.
As
$S$ 
is normal it implies that also 
$\bB_k(Z)\ul{v}\in\ker(S)$ 
and thus 
$\bB_k(Z)\ul{v}
\in\ker(\bA_j(\wt{Z}))$ 
for every 
$1\le j\le d$ 
and
$\wt{Z}\in\KK^{\sts}$, 
thus 
$\bB_k(Z)\ul{v}=\ul{0}$. 
\\
\\
3. The matrix
$\begin{bmatrix}
S\\
C\\
\end{bmatrix}$
is left invertible, as if
$\ul{u}\in\ker
\begin{bmatrix}
S\\
C\\
\end{bmatrix}$,
then
$\ul{u}\in\ker C$ and 
$$\ul{u}\in\ker (S)=
\bigcap_{1\le k\le d,\, X\in\KK^{\sts}} 
\ker (\bA_k(X)),$$ which implies 
$\ul{u}=\ul{0}$, 
since 
$(C,\ul{\bA})$ 
is observable. 

$\bullet$ Let
$K\in\KK^{L\times(L+s)}$ 
be a left inverse of
$\begin{bmatrix}
S\\C\\
\end{bmatrix}$ 
and 
$\wh{\bA_k}:=K\cdot
\begin{bmatrix}
\bA_k^*\\
\bB_k^*\\
\end{bmatrix},$
thus
\begin{align}
\label{eq:19Apr19a}
&\bA_k=K\cdot
\begin{bmatrix}
S\\
C\\
\end{bmatrix}\cdot\bA_k
=\wh{\bA_k}\cdot S,\,
\bB_k=K\cdot
\begin{bmatrix}
S\\
C\\
\end{bmatrix}\cdot\bB_k=
\wh{\bA_k}\cdot C^*
\end{align}
and
\begin{multline*}
\begin{bmatrix}
S\\
C\\
\end{bmatrix}\cdot
\wh{\bA_k}\cdot
\begin{bmatrix}
S&C^*\\
\end{bmatrix}
=\begin{bmatrix}
S\cdot\wh{\bA_k}\cdot S&S\cdot
\wh{\bA_k}\cdot C^*\\
C\cdot\wh{\bA_k}\cdot 
S&C\cdot\wh{\bA_k}\cdot C^*\\
\end{bmatrix}
=\begin{bmatrix}
S\cdot\bA_k&S\cdot\bB_k\\
C\cdot\bA_k&C\cdot\bB_k\\
\end{bmatrix}
\\=\begin{bmatrix}
(S\cdot\bA_k)^*&(C\cdot\bA_k)^*\\
(S\cdot\bB_k)^*&(C\cdot\bB_k)^*\\
\end{bmatrix}=
\begin{bmatrix}
S\cdot\wh{\bA_k}^*\cdot S
&S\cdot\wh{\bA_k}^*\cdot C^*\\
C\cdot\wh{\bA_k}^*\cdot S
&C\cdot\wh{\bA_k}^*\cdot C^*\\
\end{bmatrix}
=\begin{bmatrix}
S\\
C\\
\end{bmatrix}\cdot
\wh{\bA_k}^*\cdot
\begin{bmatrix}
S&C^*\\
\end{bmatrix},
\end{multline*}
which implies that
$\wh{\bA_k}=\wh{\bA_k}^*$. 
Moreover, 
$\wh{\bA_k}$ 
is independent of the choice of the left inverse
$K$: 
if 
$K^\prime$ 
is another left inverse of 
$\begin{bmatrix}  
S\\C\\
\end{bmatrix}$, 
then 
\begin{align*}
(K-K^\prime)\cdot
\begin{bmatrix}
\bA_k^*\\
\bB_k^*\\
\end{bmatrix}\cdot
\begin{bmatrix}
S&C^*\\
\end{bmatrix}=
\begin{bmatrix}
\bA_k-\bA_k& \bB_k-\bB_k\\
\end{bmatrix}=
\begin{bmatrix}
0&0\\
\end{bmatrix}
\end{align*}
and the right invertibility of
$\begin{bmatrix}
S&C^*\\
\end{bmatrix}$ 
implies that
$K\cdot\begin{bmatrix}
\bA_k^*\\
\bB_k^*\\
\end{bmatrix}
=K^\prime\cdot
\begin{bmatrix}
\bA_k^*\\
\bB_k^*\\
\end{bmatrix}$. Notice that from the controllability of
$(\ul{\bA},\ul{\bB})$ 
we get 
$$\bigcap_{1\le k\le d,\,X\in\KK^{\sts}} 
\ker\big(\wh{\bA_k}(X)\big)=
\{\ul{0}\}.$$

$\bullet$ Next, as 
$S=S^*$, 
one can write
$S=TJT^*$, 
where 
$T\in\KK^{L\times L}$ 
is invertible and 
$J\in\KK^{L\times L}$ 
is a semi-signature matrix. Thus, using
the relations in 
(\ref{eq:19Apr19a}), we obtain
\begin{multline*}
\cR_{\cF\cM}(\ul{X})=D+C
\Big(I_{L}-\sum_{k=1}^d 
\wh{\bA_k}(X_k-Y_k)S\Big)^{-1}
\sum_{k=1}^d
\wh{\bA_k}(X_k-Y_k)C^*
\\=D+\check{C}
\Big(I_L-\sum_{k=1}^d
\check{\bA_k}(X_k-Y_k)J\Big)^{-1}
\sum_{k=1}^d
\check{\bA_k}(X_k-Y_k)\check{C}^*,
\end{multline*}
where
$\check{C}=C(T^*)^{-1}$
and
$\check{\bA_k}=T^*\cdot\wh{\bA_k}\cdot T$ 
are hermitian, for 
$1\le k\le d$.
Moreover, it is easily seen  that the 
controllability of
$(\ul{\bA},\ul{\bB})$
and the observability of
$(C,\ul{\bA})$
imply 
the controllability of
$(\ul{\check{\bA}}\cdot J,\ul{\check{\bA}}\cdot \check{C}^*)$
and
the observability of 
$(\check{C},\ul{\check{\bA}}\cdot J)$, 
respectively.
\\
\\
4. Define
$E:=F+\check{C}J\check{C}^*$. 
The matrix 
$$\wt{S}=
\begin{bmatrix}
J&\check{C}^*\\
\check{C}&E
\end{bmatrix}\in\KK^{(L+s)\times(L+s)}$$ 
is hermitian and invertible, whenever 
$F\succ0$
or
$F\prec 0$: 
it is easily seen that 
$\wt{S}^*=\wt{S}$; 
let 
$\begin{bmatrix}
\ul{u}\\
\ul{v}\\
\end{bmatrix}
\in\ker(\wt{S})$, 
thus 
$J\ul{u}+\check{C}^*\ul{v}=\ul{0}$
and
$\check{C}\ul{u}+E\ul{v}=\ul{0}$, 
therefore 
$\check{C}^*\ul{v}=-J\ul{u}$ and when we plug it in to the other equation we get
$F\ul{v}=\check{C}(J^2-I_L)\ul{u}$.
Thus,
$-J\ul{u}=\check{C}^*F^{-1}\check{C}
(J^2-I_L)\ul{u}$ 
and multiplying both sides by 
$\ul{u}^*(J^2-I_L)$ 
on the left, to get
$$\ul{u}^*(J^2-I_L)\check{C}^*
F^{-1}\check{C}(J^2-I_L)\ul{u}=
-\ul{u}^*(J^2-I_L)J\ul{u}=
-\ul{u}^*(J^3-J)\ul{u}=\ul{0}.$$ 
Then
$\ul{v}=\check{C}(J^2-I_L)\ul{u}=\ul{0}$,
which implies that 
$J\ul{u}=\ul{0}$ 
and 
$\check{C}\ul{u}=\ul{0}$,
i.e., that 
$\begin{bmatrix}
J\\
\check{C}\\
\end{bmatrix}\ul{u}=\ul{0}$.
Recall that 
$\begin{bmatrix}
S\\
C\\
\end{bmatrix}=
\begin{bmatrix}
TJT^*\\
\check{C}T^*\\
\end{bmatrix}=
\begin{bmatrix}
T&0\\
0&I_s\\
\end{bmatrix}
\begin{bmatrix}
J\\\check{C}\\
\end{bmatrix}T^*$
is left invertible and hence 
$\begin{bmatrix}
J\\
\check{C}\\
\end{bmatrix}$
is left invertible and hence 
$\ul{u}=\ul{0}$.

$\bullet$ As
$\wt{S}$ 
is hermitian and invertible, there exist an invertible matrix 
$\wt{T}\in\KK^{(L+s)\times(L+s)}$ 
and a signature matrix
$J_{\cD}\in\KK^{(L+s)\times(L+s)}$ such that 
$\wt{S}^{-1}=\wt{T}J_{\cD}\wt{T}^*$, therefore
\begin{multline*}
\cR_{\cF\cM}(\ul{X})=D-E+
\begin{bmatrix}
\check{C}&E\\
\end{bmatrix}
\Big( I_{L+s}-\sum_{k=1}^d
\begin{bmatrix}
\check{\bA_k}J&\check{\bA_k}\check{C}^*\\
0&0\\
\end{bmatrix}(X_k-Y_k)\Big)^{-1}
\begin{bmatrix}
0\\I_s\\
\end{bmatrix}
=D\\-E
+
\begin{bmatrix}
0&I_s\\
\end{bmatrix}\wt{S}
\Big(
I_{L+s}-\sum_{k=1}^d
\begin{bmatrix}
\check{\bA_k}&0\\
0&0\\
\end{bmatrix}(X_k-Y_k)\wt{S}
\Big)^{-1}
\begin{bmatrix}
0\\
I_s\\
\end{bmatrix}
=D-E+
\begin{bmatrix}
0&I_s\\
\end{bmatrix}\\
\Big(\wt{S}^{-1}-\sum_{k=1}^d
\begin{bmatrix}
\check{\bA_k}&0\\
0&0\\
\end{bmatrix}(X_k-Y_k)\Big)^{-1}
\begin{bmatrix}
0\\
I_s\\
\end{bmatrix}
=D_{\cD}+C_{\cD}
\Big( J_{\cD}-\sum_{k=1}^d
\bA_{k,\cD}(X_k-Y_k)\Big)^{-1}C_{\cD}^*
=\cR_{\cD}(\ul{X}),
\end{multline*}
for every 
$\ul{X}\in DOM_s(\cR_{\cF\cM})$,
where
$$D_{\cD}:=D-F-\check{C}J\check{C}^*,\, 
C_{\cD}:=
\begin{bmatrix}
0&I_s\\
\end{bmatrix}
\wt{T}^{-*} 
\text{ and } 
\bA_{k,\cD}:=\wt{T}^{-1}
\begin{bmatrix}
\check{\bA_k}&0\\
0&0\\
\end{bmatrix}\wt{T}^{-*},$$
for every 
$1\le k\le d$.
We showed that 
$\cR_{\cF\cM}(\ul{X})=
\cR_{\cD}(\ul{X})$ 
and it is easily seen from 
the last computation that 
$$\ul{X}\in 
DOM_s(\cR_{\cF\cM})\iff 
\det\Big(J_{\cD}-\sum_{k=1}^d
\bA_{k,\cD}(X_k-Y_k)\Big)\ne0,$$
i.e., that 
$DOM_s(\cR_{\cD})=DOM_s(\cR_{\cF\cM})$. 

$\bullet$ Furthermore, straight forward computations 
show that
for every 
$m\ge 1$:
$$\ul{X}\in DOM_{sm}(\cR_{\cF\cM})\iff
\det\Big(I_m\otimes J_{\cD}-\sum_{k=1}^d
(X_k-I_m\otimes Y_k)\bA_{k,\cD}\Big)\ne0,$$
i.e., that 
$DOM_{sm}(\cR_{\cF\cM})=
DOM_{sm}(\cR_{\cD})$  and also that 
$\cR_{\cF\cM}(\ul{X})=\cR_{\cD}(\ul{X})$,
as well as that
$DOM^{\cA}(\cR_{\cF\cM})=DOM^{\cA}(\cR_{\cD})$ 
for any unital stably finite 
$\KK-$algebra $\cA$, i.e., for every 
$\ul{\fA}\in 
(\cA^{\sts})^d$: 
$$\ul{\fA}\in
DOM^{\cA}(\cR_{\cF\cM})\iff 
\Big(J_{\cD}\otimes 1_{\cA}-\sum_{k=1}^d
(\fA_k-Y_k\otimes 1_{\cA})\bA_{k,\cD}^{\cA}\Big)
\text{ is invertible in }\cA^{(L+s)\times (L+s)}$$
and for such $\ul{\fA}$
we have $\cR_{\cF\cM}^{\cA}(\ul{\fA})=\cR_{\cD}^{\cA}(\ul{\fA})$.
\\\\
5. If
$S$
is invertible, then 
$\bB_k^*\cdot S=C\cdot\bA_k$
implies that
$\bB_k=S^{-1}\cdot\bA_k^*\cdot C^*=\bA_k\cdot S^{-1}C^*$
so we can choose 
$Q=S^{-1}C^*$ to get 
(\ref{eq:9Jan19a}).

$\bullet$ On the other hand, suppose there exists 
$Q\in\KK^{L\times s}$
such that 
$\bB_k=\bA_k\cdot Q$ 
for all
$1\le k\le d$.
Thus,
$C\cdot\bA_k=\bB_k^*\cdot S=Q^*\cdot\bA_k^*\cdot S=Q^*S\cdot\bA_k$
and also 
$C\cdot\bB_k=C\cdot\bA_k\cdot Q
=Q^*S\cdot\bA_k\cdot Q=Q^*S\cdot\bB_k,$
but the controllability of 
$(\ul{\bA},\ul{\bB})$ 
implies that 
$C=Q^*S$.
Therefore, $\ker(S)\subseteq \ker(C)$ 
but as
$$\ker(S)=\bigcap_{1\le k\le d,\, 
X\in\KK^{\sts}} 
\ker \left(\bA_k(X)\right),$$ 
it follows from the observability of 
$(C,\ul{\bA})$ 
that 
$\ker(S)=\{\ul{0}\}$, 
i.e., that 
$S$ 
is invertible.

$\bullet$
If 
$S$
is invertible, then 
$J$ is invertible and the realization 
(\ref{eq:4Jan19a}) 
is obtained from the realization 
(\ref{eq:1Feb19a}) 
immediately. 

$\bullet$ Finally, from 
(\ref{eq:4Jan19a}) we get
\begin{multline*}
\cR_{\cF\cM}(\ul{X})
=D+\check{C}J
\Big(J-\sum_{k=1}^d \check{\bA_k}(X_k-Y_k)\Big)^{-1}
\Big(\sum_{k=1}^d \check{\bA_k}(X_k-Y_k)-J+J\Big)
\check{C}^*
\\=D-\check{C}J\check{C}^*
+\check{C}
\Big(J-\sum_{k=1}^d \wt{\bA_k}(X_k-Y_k)\Big)^{-1}
\check{C}^*,
\end{multline*}
that is the realization in 
(\ref{eq:1Feb19c})
\end{proof}
\begin{remark}
If 
$\fR$ is an hermitian nc rational function, there exists $\ul{Y}\in dom(\fR)$ such that $\ul{Y}^*=\ul{Y}$;
for a proof see  
\cite[pp. 28--29]{V2}.
\end{remark}
\begin{remark}

We leave it to a future work, to describe connections between properties of the (semi-)signature matrices,
$J$ and
$J_{\cD}$, appear in Theorem 
\ref{thm:22Feb19a} 
and properties of the  function 
$\fR$, such as perhaps (matrix) convexity (cf. 
\cite{HMcCV}).
\end{remark}
\begin{remark}
\label{rem:9May19b}
We say that a nc rational function
$\fR$ 
admits a 
descriptor realization 
$$\cR_{\cD}(\ul{X})=C_{\cD}
\Big(I_{L_{\cD}}-\sum_{k=1}^d
\bA_{k,\cD}(X_k-Y_k)\Big)^{-1}B_{\cD}$$
centred at 
$\ul{Y}\in(\KK^{\sts})^d$, if $dom_{sm}(\fR)\subseteq DOM_{sm}(\cR_{\cD})$
and
$\fR(\ul{X})=\cR_{\cD}(\ul{X})$
for every 
$\ul{X}\in dom_{sm}(\fR)$, 
cf. Definition
\ref{def:25Sep18a}.
We present some relations between Fornasini--Marchesini realizations and descriptor realizations (not necessarily in the symmetric case), without precise definitions of 
controllability and observability, as well as the McMillan degree 
(denoted by $Deg_{s,\cD}(\fR)$), of descriptor realization:

$\bullet$ If 
$\fR$ 
admits a descriptor realization centred at 
$\ul{Y}$, described by 
$(L_\cD,C_{\cD},
\ul{\bA}_{\cD},B_{\cD})$, 
then it admits a
nc Fornasini--Marchesini realization described by
$$L_{\cF\cM}=L_{\cD},\,
D_{\cF\cM}=C_{\cD}B_{\cD},\,
C_{\cF\cM}=C_{\cD},\,
\bA_{k,\cF\cM}=\bA_{k,\cD}
\text{ and } 
\bB_{k,\cF\cM}=\bA_{k,\cD}B_{\cD},$$ 
for 
$1\le k\le d$, 
$\cR_{\cD}$ 
is observable if and only if 
$\cR_{\cF\cM}$ 
is observable, and if
$\cR_{\cF\cM}$
is controllable then
$\cR_{\cD}$
is controllable.
Therefore we  have the relation
\begin{align}
\label{eq:5Jan19b}
\fm(\fR)s\le Deg_{s,\cD}(\fR).
\end{align}

$\bullet$
If
$\fR$ 
admits a nc
Fornasini--Marchesini realization centred at 
$\ul{Y}\in(\KK^{\sts})^d$, 
described by
$(L_{\cF\cM},D_{\cF\cM},C_{\cF\cM},\ul{\bA}_{\cF\cM},\ul{\bB}_{\cF\cM})$,
then 
$\fR$ 
admits a descriptor realization described by
\begin{align*}
L_{\cD}=L_{\cF\cM}+s,\,
C_{\cD}=
\begin{bmatrix}
C_{\cF\cM}&D_{\cF\cM}\\
\end{bmatrix},\,
\bA_{k,\cD}=
\begin{bmatrix}
\bA_{k,\cF\cM}&\bB_{k,\cF\cM}\\
0&0_{\sts}\\
\end{bmatrix}
\text{ and }
B_{\cD}=
\begin{bmatrix}
0_{L_{\cF\cM}\times s}\\
I_s\\
\end{bmatrix},\end{align*}
for
$1\le k\le d$,
$\cR_{\cF\cM}$
is controllable if and only if
$\cR_{\cD}$
is controllable, and if 
$\cR_{\cD}$
is observable then $\cR_{\cF\cM}$
is observable. Therefore we have the relation
\begin{align}
\label{eq:5Jan19a}
Deg_{s,\cD}(\fR)\le 
(\fm(\fR)+1)s.
\end{align}

$\bullet$ 
The inequalities
(\ref{eq:5Jan19b}) and 
(\ref{eq:5Jan19a})
imply that
$$\fm(\fR)s\le Deg_{s,\cD}(\fR)\le 
(\fm(\fR)+1)s,$$
whereas an analogue of Lemma 
\ref{rem:18Nov18a} 
for descriptor realizations guarantees that 
$s\mid Deg_{s,\cD}(\fR)$,
then which then imply that
\begin{align*}
Deg_{s,\cD}(\fR)=\fm(\fR)s
\text{ or }
Deg_{s,\cD}(\fR)=(\fm(\fR)+1)s. 
\end{align*}
\end{remark}

\end{document}